\newcommand{\ttup}[1]{\textup{(}#1\textup{)}}
\newcommand{\stkout}[1]{\ifmmode\text{\sout{\ensuremath{#1}}}\else\sout{#1}\fi}
\newtheorem{lemma}{Lemma}[section]
\newtheorem{theorem}{Theorem}[section]
\newtheorem{corollary}{Corollary}[section]
\theoremstyle{definition}
\newtheorem{definition}{Definition}[section]
\newtheorem{assumption}{Assumption}[section]
\newtheorem{example}{Example}[section]
\theoremstyle{remark}
\newtheorem{remark}{Remark}[section]
\numberwithin{theorem}{section}
\numberwithin{equation}{section}
\crefname{section}{Section}{Sections}
\crefname{subsection}{Section}{Sections}
\crefname{condition}{Condition}{Conditions}
\crefname{hypothesis}{Hypothesis}{Conditions}
\crefname{assumption}{Assumption}{Assumptions}
\crefname{lemma}{Lemma}{Lemmas}
\crefname{fact}{Fact}{Facts}
\Crefname{figure}{Figure}{Figures}
\newcommand{\vertiii}[1]{{\left\vert\kern-0.25ex\left\vert\kern-0.25ex\left\vert #1 
    \right\vert\kern-0.25ex\right\vert\kern-0.25ex\right\vert}}
\newcommand{\lamstr}{\lambda^{\mspace{-2mu}*}}
\newcommand{\process}[1]{{\{#1_t\}_{t\ge0}}}
\newcommand{\Uadm}{\mathfrak Z}
\newcommand{\Act}{\mathcal{Z}}
\newcommand{\Usm}{\mathfrak Z_{\mathsf{sm}}}
\newcommand{\bUsm}{\overline{\mathfrak Z}_{\mathsf{sm}}}
\newcommand{\Usms}{\mathfrak Z^*_{\mathsf{sm}}}
\newcommand{\cA}{{\mathcal{A}}}  
\newcommand{\sA}{{\mathscr{A}}}  
\newcommand{\fB}{{\mathfrak{B}}}  
\newcommand{\sB}{{\mathscr{B}}}  
\newcommand{\cC}{{C}}   
\newcommand{\sE}{{\mathscr{E}}} 
\newcommand{\sF}{{\mathfrak{F}}}   
\newcommand{\cG}{{\mathcal{G}}}  
\newcommand{\cI}{{\mathcal{I}}}  
\newcommand{\cIm}{{\widehat{\mathcal{I}}}}
\newcommand{\sJ}{{\mathscr{J}}}  
\newcommand{\fJ}{{\mathfrak{J}}}
\newcommand{\cK}{{\mathcal{K}}}  
\newcommand{\Lg}{{\mathcal{L}}}  
\newcommand{\Lp}{{L}}            
\newcommand{\cT}{{\mathcal{T}}}  
\newcommand{\cN}{{\mathcal{N}}}  
\newcommand{\sP}{{\mathscr{P}}}
\newcommand{\Lyap}{{\mathscr{V}}}  
\newcommand{\cX}{{\mathcal{X}}}
\newcommand{\RR}{\mathds{R}}
\newcommand{\NN}{\mathds{N}}
\newcommand{\Rd}{{\mathds{R}^{d}}}
\DeclareMathOperator{\Exp}{\mathbb{E}}
\DeclareMathOperator{\Prob}{\mathbb{P}}
\newcommand{\D}{\mathrm{d}}
\newcommand{\E}{\mathrm{e}}
\newcommand{\Ind}{\mathds{1}}   
\newcommand{\Sob}{{\mathscr W}}    
\newcommand{\Sobl}{{\mathscr W}_{\mathrm{loc}}} 
\newcommand{\df}{\coloneqq}
\newcommand{\transp}{^{\mathsf{T}}}
\DeclareMathOperator*{\trace}{Tr}
\DeclareMathOperator*{\dist}{dist}
\DeclareMathOperator*{\diam}{diam}
\DeclareMathOperator*{\supp}{support}
\newcommand{\order}{{\mathscr{O}}}
\newcommand{\grad}{\nabla}
\newcommand{\uuptau}{{\Breve\uptau}}
\newcommand{\abs}[1]{\lvert#1\rvert}
\newcommand{\norm}[1]{\lVert#1\rVert}
\newcommand{\babs}[1]{\bigl\lvert#1\bigr\rvert}
\newcommand{\babss}[1]{\biggl\lvert#1\biggr\rvert}
\newcommand{\bnorm}[1]{\bigl\lVert#1\bigr\rVert}
\definecolor{dmagenta}{rgb}{.4,.1,.5}
\definecolor{dblue}{rgb}{.0,.0,.5}
\definecolor{mblue}{rgb}{.0,.0,.7}
\definecolor{ddblue}{rgb}{.0,.0,.4}
\definecolor{dred}{rgb}{.7,.0,.0}
\definecolor{dgreen}{rgb}{.0,.5,.0}
\definecolor{Eeom}{rgb}{.0,.0,.5}
\newcommand{\ttl}{\Large Risk-sensitive control for a class of
diffusions with jumps}
\begin{document}
\title[Risk-sensitive control for a class of diffusions with jumps]
{\ttl}

\author[Ari Arapostathis]{Ari Arapostathis$^{\dag}$}
\address{$^{\dag}$Department of ECE,
The University of Texas at Austin,
EER~7.824, Austin, TX~~78712}
\email{ari@utexas.edu}

\author[Anup Biswas]{Anup Biswas$^\ddag$}
\address{$^\ddag$Department of Mathematics,
Indian Institute of Science Education and Research,
Dr.\ Homi Bhabha Road, Pune 411008, India}
\email{anup@iiserpune.ac.in}

\begin{abstract}
We consider a class of  diffusions controlled through the drift and jump size,
and driven by a jump L\'evy process and a nondegenerate Wiener process,
and we study infinite horizon (ergodic) risk-sensitive control
problems for this model. 
We start with the controlled Dirichlet eigenvalue problem in smooth bounded domains,
which also  allows us to generalize current results in the literature on exit rate
control problems. Then we consider the infinite horizon
average risk-sensitive  minimization and
maximization problems on the whole domain.
Under suitable hypotheses, we establish existence and uniqueness
of a principal eigenfunction for the Hamilton--Jacobi--Bellman (HJB)
operator on the whole space,
and fully characterize stationary Markov optimal controls
as the measurable selectors of this HJB equation.
\end{abstract}
\keywords{Principal eigenvalue, semilinear integro-differential equations,
stochastic representation, exit rates}

\subjclass[2000]{Primary 35P30, 60J60, Secondary 37J25, 35Q93}

\maketitle


\section{Introduction}
Risk-sensitive control of continuous time processes became popular since the
seminal work of Fleming and McEneaney \cite{FM95}, and evolved rapidly primarily
because of its applications in finance \cite{BP99, FS00}.
Some early literature on the
risk-sensitive control can be found in \cite{DK89,DM97,ADS03,JBE94,J92,Wh81,DL08} and
a book dedicated to this topic is the work of Whittle \cite{Whittle}.
In this article we are interested in the ergodic limit
of the risk-sensitive criterion and
there is a substantial amount of work in the literature on this topic.
See for instance,
\cite{AB18,AB18a,ABS19,Biswas-11,Biswas-11a,Biswas-10,BS18,Nagai96,HS10,Menaldi-05}
and references therein.
The body of work on ergodic risk-sensitive control of general Markov processes
is large and it
is impossible to give a complete list of references.
We cite \cite{BorMey02,DMS99,DMS07,AJ07} for discrete Markov chains and
\cite{GS14,SP15} for continuous time Markov chains.
Though this problem has been studied for the last two decades
some of the important questions
for the problem on the whole space
over an infinite horizon,
like uniqueness of the value function,
verification results etc.,  were addressed only very recently in \cite{AB18,ABS19},
and variational representations are established in \cite{AB18a}.
Risk-sensitive control also attracted immense interest because of its connection
to the study of large deviations of
occupation measures of diffusions \cite{DV75,DVIII,Kaise-06}.
The infinite horizon (ergodic)
risk-sensitive control problem we are addressing can be informally described as follows:
given a controlled stochastic differential equation (with jumps) of the form
\begin{equation}\label{E-sde}
\D X_t \,=\, b_\circ(X_t, Z_t)\, \D t + \upsigma(X_t)\, \D W_t
+ \int_{\RR^m\setminus\{0\}} g(X_{t-}, Z_t, \xi)\, \widetilde\cN(\D t, \D\xi)\,,
\quad X_0=x\in\Rd\,,
\end{equation}
where $b_\circ$ is the drift, $\upsigma$ is the diffusion matrix, $g$ is the `jump-size',
$\widetilde\cN$ is a compensated Poisson process
with a finite characteristic measure $\Pi$,
and $Z$ is an admissible control
taking values in a compact metrizable space $\Act$.

Controlled jump-diffusions with 
a compact action space arise as heavy traffic limits to controlled queueing and
communications networks,
see for instance \cite{Kush00} and references therein.
As shown in \cite{PW09}, for $G/M/n+M$ queues with
asymptotically negligible service interruptions,
the limit queueing process is a one-dimensional L{\'e}vy-driven SDE.
For a description of the controlled dynamics of
these queueing models, we refer the reader to \cite[Section~4.2]{APS19}
and \cite{AHPS-19}.
As can be seen in these papers, the limiting diffusion of these queueing systems
has a compact action space and the driving L{\'e}vy process has a finite characteristic measure,
so it matches the model studied in this paper.
In addition, the jump size does
not depend on the current state or the control parameter.
A similar setting is also used by \cite{MR99,AB-20a} to
study an ergodic control problem.
However, the risk-sensitive control problem for these systems has not been addressed in
the literature, and is open.
\Cref{E-sde} also appears in portfolio optimization problems. 
For instance, if we impose a restriction on 
short selling, then the portfolio wealth process is modeled 
as in \cref{E-sde} where
the portfolio strategies take values in some compact metric space \cite[p.~1555]{DGR18}.

The risk-sensitive control problem aims to minimize
\begin{equation*}
\limsup_{T\to\infty} \,\frac{1}{T}\,
\log\Exp_x\left[\E^{\int_0^T c(X_s, Z_s)\,\D s}\right],
\end{equation*}
over all admissible controls $Z=\{Z_t\}_{t\ge0}$,
where $c$ is a nonnegative running cost function.
We are interested in the characterization of the optimal value,
and the optimal controls.
It is natural to expect that this results in an eigenvalue problem,
namely, an equation of the form
\begin{equation}\label{E-HJB}
\trace (a\grad^2 V)+ \min_{\zeta\in\Act}\,\bigl\{ I[V, x, \zeta] + b(x,\zeta)\cdot \grad V (x)
+ c(x,\zeta) V(x)\bigr\} \,=\, \sE^* V \quad \text{in\ } \Rd\,,
\end{equation}
for some positive $V\in\Sobl^{2,p}(\Rd)$, $p>d$,
where $a\df \frac{1}{2}\upsigma\upsigma\transp$,
\begin{equation*}
b(x, \zeta) \,=\, b_\circ(x, \zeta) - \int_{\RR^m\setminus\{0\}} g(x,\zeta, \xi) \Pi(\D{\xi})\,,
\end{equation*}
and $I[V, x, \zeta]$ denotes
the non-local interaction given by
\begin{equation*}
I[V, x, \zeta] \,=\, \int_{\RR^m\setminus\{0\}}
\bigl(V(x+ g(x,\zeta, \xi))-V(x)\bigr)\Pi(\D{\xi})\,.
\end{equation*}
We refer to $V$ and $\sE^*$ as the \emph{value function} and
\emph{optimal value}, respectively.
Ideally, one expects $\sE^*$ to be the 
principal eigenvalue of the above operator.
However, it is now known from \cite[Example~3.1]{ABS19} that this might not
be the case,
in general, even for continuous controlled diffusions, that is, with $g=0$. 
At the same time, for $g=0$, the above operator has uncountably many generalized
eigenvalues \cite{Berestycki-15}.
In the recent articles \cite{AB18, ABS19}, the 
authors develop a very general set of criteria under which $\sE^*$ coincides
with the generalized principal eigenvalue of the above operator in $\Rd$ (for $g=0$).
It is also shown in \cite{ABS19} that the uniqueness of the principal eigenfunction
is related to its monotonicity with respect to the function $c$.

It is important to mention another class of risk-sensitive control problems known as
Exponential Linear Quadratic Gaussian (ELQG) problems.
These models are quite popular due to their application in mathematical finance,
see for instance \cite{Nagai96,HS10,FS00,DL13} and references therein.
They allow the action set to be unbounded but generally impose a 
more restrictive assumption on the structure of the drift and running cost.
The dynamics are governed by \cref{E-sde}, with $\upsigma$ constant,
$\widetilde\cN\equiv0$, and $b_\circ(x,\zeta)=\zeta$, where $\zeta\in\Rd$.
A typical running cost is
$c(x,\zeta)= h(x) + \frac{\abs{\zeta}^2}{2}$ for some inf-compact function $h$.
More general running costs are studied, but they are essentially perturbations
of this form.

We introduce a risk parameter
$\theta\neq 0$, and define
\begin{equation}\label{E-theta}
\sE^*(\theta)\,\df\,\inf_{Z}\,\limsup_{T\to\infty} \,\frac{1}{\theta T}\,
\log\Exp_x\left[\E^{\int_0^T \theta c(X_s, Z_s)\,\D s}\right].
\end{equation}
Note that $\sE^*(\theta)$ might not be finite for all values of $\theta$.
This is known as the
\emph{breakdown} phenomenon.
It is thus important to determine the range of $\theta$ for which
breakdown does not occur (see \cite{Nagai96,HS10}).
Letting $g=0$ and $W=\frac{1}{\theta}\log V$, we obtain
from \cref{E-HJB} that
\begin{equation}\label{E-HJB1}
\sE^*(\theta) \,=\, \trace \bigl(a\grad^2 W(x)\bigr)
+ H\bigl(x,\grad W(x)\bigr) + h(x) \quad \text{in\ } \Rd\,,
\end{equation}
where
\begin{align*}
H(x,p)&\,\df\,
\theta p\cdot ap +
 \min_{\zeta\in\Act}\,\bigl\{  \zeta\cdot \grad p
+ \frac{\abs{\zeta}^2}{2} \bigr\}\\
&\,=\,  \theta p\cdot ap - \tfrac{1}{2} \abs{p}^2\,,\quad p\in\Rd\,,
\end{align*}
denotes the Hamiltonian.
\Cref{E-HJB1} constitutes an ergodic control problem
and has been studied extensively \cite{BF92,Ichi11,Ichi15}.
Moreover, if $H(x, p)\asymp - |p|^2$, then
the existence and uniqueness of a solution can be studied using the method in \cite{BF92}. 
For ELQG problems this is guaranteed whenever $\theta\leq \delta$ for some small
$\delta>0$ (see \cite{Nagai96}).
On the other hand, when $H(x, p)\asymp |p|^2$, studying existence and uniqueness of
a solution to \cref{E-HJB1} is delicate, and often needs more restrictive
assumptions  (see \cite[Condition~(A3)]{Kaise-06}).
We should also mention the work in \cite{HS10} where the authors 
find the explicit solution of \cref{E-HJB1} for $d=1$ for a particular class of problems. 

Now compare this with the model in the present article that is,
where the action set is compact, but let $g=0$.
In this model the Hamiltonian $H(x,p)$ behaves like $|p|^2$ and the method of \cite{BF92}
does not apply.
For this reason, the authors of \cite{ABS19} studied the  eigen-equation \cref{E-HJB} instead.
It can be shown that no-breakdown is directly related to the finiteness
of the principal eigenvalue of the HJB equation, and this finiteness is assured by \cref{A1.1}
in \cref{S1.2}.
In fact, this assumption can be seen as a variant of \cite[Condition~(A3)]{Kaise-06}.

We mention some general facts about risk-sensitive control.
From the Taylor series expansion of the exponential function, one sees that
the risk-sensitive criterion captures all moments of the
cumulative cost $\int_0^T \theta c(X_s, Z_s)\,\D s$.
Thus, it can be viewed as an extension of the `mean-variance' criterion.
But unlike the latter, it is amenable to dynamic programming.
If we let $\theta\searrow0$ in \cref{E-theta}, we formally recover the
average cost as a limiting case (risk-neutral case).
Also note that in classical criteria, cost minimization is equivalent
to reward maximization by a flip of a sign of the running cost.
In risk-sensitive control, this is not so, you get a different problem
(see \cref{S-max}).

The studies cited above, deal with the case where $g=0$.
In this article we consider the problem where the jump component is present, that
is $g\ne0$.
To the best of our knowledge, there is no work in the literature that considers
ergodic risk-sensitive control problems for jump diffusions.
There are few recent studies \cite{DL11, DL13, DGR18} that consider finite horizon
risk-sensitive control problems for a particular class of
jump diffusions.
The main goals of this paper are the following:
(a) characterize the optimal value $\sE^*$ as the principal eigenvalue of the
quasi-linear operator in \cref{E-HJB},
(b) establish uniqueness of the value function $V$, and
(c) develop verification results for the optimal Markov controls.
We establish all these results under a blanket
geometric ergodicity hypothesis on the dynamics.
Similar results are also obtained for the
risk-sensitive maximization problem without imposing a blanket stability hypothesis,
 but instead, under a near-monotone structural assumption.

We compare the results and methodology in this paper to the existing literature.
There are two main approaches in the study of
ergodic risk-sensitive control problems for the case $g=0$ (with compact action space).
The first approach, consists of formulating a discounted risk-sensitive
control problem, and then, by taking a suitable normalization
of the discounted value function, deriving \cref{E-HJB} as
a vanishing discount limit; see, for instance, \cite{Menaldi-05, Biswas-10}.
In the second approach, one starts from the Dirichlet eigenvalue problem
on bounded domains, and derives \cref{E-HJB} as the limit over a sequence
of expanding domains which covers $\Rd$ \cite{Biswas-11a, ABS19}.
Using the first approach, it is possible
to show that $(V, \sE^*)$ is an eigenpair of \cref{E-HJB},
but concluding that $\sE^*$ is the principal eigenvalue is not an easy task.
This property is important in establishing uniqueness of the value function $V$.
In contrast, the second approach directly obtains $\sE^*$ as the
principal eigenvalue.
It is also important to note that unlike the case of $g=0$,
the logarithmic transformation of the value function $V$ does not lead to a
HJB equation corresponding to some stochastic control problem.
In this work we follow the second approach, taking the path of \cite{ABS19}.
The first hurdle arises from the fact that almost nothing is known for the eigenvalue
problem of the operator in \cref{E-HJB} on bounded domains.
A recent study \cite{DQT} addresses the eigenvalue problem on a bounded domain
for stable-like operators.
However, our operator is not of this type.
So we first study the spectral properties in \cref{S-bounded} for bounded domains.
The next challenge is how to pass to the limit as the domain increases to $\Rd$.
Note that the operator is non-local and Harnack's inequality, which asserts
that the eigenfunctions are locally uniformly bounded, fails, in general, for this
class of operators (see \cite[Example~1.1]{ACPZ18}).
Therefore, the standard method followed in \cite{ABS19,Berestycki-15}
does not apply, and instead,
we use the Lyapunov function to construct a barrier,
together with a novel method to establish a lower bound
of the limit of the eigenvalues on bounded domains (see \cref{L4.2}).
Finiteness of the characteristic measure
$\Pi$ is crucial in this approach. More precisely, we treat the nonlocal integration 
as a zeroth order perturbation of the local pde, and apply a generalized Harnack's inequality.
The same method does not work if $\Pi$ is a non-finite measure.
Our approach also provides an alternative  way of finding the \emph{principal} eigenfunction
and eigenvalue in situations where Harnack's inequality may
not be available, for instance, Markov chains with a general state space.
On the other hand, for the maximization problem, we use the near-monotone property
of the running cost to bound the eigenfunctions.
We should also point out that the analysis of \cite{ABS19} heavily uses the 
\emph{twisted process}, or Doob's $h$-transformation,
but such a transformation is not simple to construct for the jump diffusion model.
So we rely heavily on the stochastic representation of the principal eigenfunction,
and use it cleverly to overcome the difficulties.
To the best of our knowledge,
these are the first results in the literature for
the generalized eigenvalue problem in unbounded domains for semilinear operators
 with a non-local term.
It should also be noted that we do not allow dependence on
the control variable for the diffusion.
This is a standard setting (see \cite{book})
and allows us to construct a strong Markov process under any stationary
Markov control.
To elaborate on this matter, recall that one of our main goals is 
to obtain a verification of optimality result.
The optimal stationary Markov controls $v$ are obtained via a measurable selection argument
(see \cref{T1.1}), hence, we do not have any information about their regularity.
Thus if we allow $\upsigma$ to depend on the control
parameter $\zeta$, then $x\mapsto a\bigl(x,v(x)\bigr)$
is only measurable and not continuous, in general.
Therefore, it becomes difficult to give a meaning
to the solution to \cref{E-sde} under arbitrary stationary Markov controls.
Additional hypotheses are needed to guarantee, for example, that
$x\mapsto a\bigl(x,v(x)\bigr)$ is continuous.
For this reason, we let the diffusion coefficient $\upsigma$ to be independent of the control.

The tools we develop in \cref{S-bounded}
are useful in the study of the exit rate problem, which seeks
to maximize the rate function
\begin{equation*}
\limsup_{T\to\infty}\,\frac{1}{T}\, \log \Prob^Z_x(\uptau>T)\,,
\end{equation*}
where $\uptau$ is the first
exit time from a smooth domain $D$, and $Z$ is an admissible control.
It turns out that the optimal value of this problem
is the principal eigenvalue of a suitable operator.
For $g=0$ the exit rate problem is considered in \cite{BB10, BA15}.
Such problems arise in reliability theory where one often wants to confine the
controlled process to a prescribed region of its state space for as long as possible.
In \cref{S-control}
we provide a complete characterization (see \cref{T3.2}) to the exit rate problem,
including verification of optimality results.

The rest of the paper is organized as follows:
In the next section we describe the model, and state the assumptions and the
main results.
\cref{S-bounded} studies the Dirichlet eigenvalue problem on bounded domains
under more general hypotheses, and \cref{S-control} is devoted to the exit
rate control problem.
The proofs or the results in \cref{S-bounded} are in \cref{S-proofs}.
\cref{S-risk} is devoted to the study of the eigenvalue problem in $\Rd$
and the risk sensitive minimization problem, while \cref{S-max} treats
the maximization problem.

\subsection{Notation.}\label{Snot}
We denote by $\uptau(A)$ the \emph{first exit time} of the process
$\{X_{t}\}$ from the set $A\subset\RR^{d}$, defined by
\begin{equation*}
\uptau(A) \,\df\, \inf\,\{t>0\,\colon X_{t}\not\in A\}\,.
\end{equation*}
The open ball of radius $r$ centered at $x\in\Rd$
is denoted by $\sB_{r}(x)$, and $\sB_{r}$ without an argument denotes
the ball centered at $0$.
We let $\uptau_{r}\df \uptau(\sB_{r})$,
and $\uuptau_{r}\df \uptau(\sB^{c}_{r})$.

The complement and closure
of a set $A\subset\Rd$ are denoted
by $A^{c}$ and $\Bar{A}$, respectively, and  $\Ind_A$
denotes its indicator function.
The minimum (maximum) of two real numbers $a$ and $b$ is denoted by $a\wedge b$ 
($a\vee b$), respectively,
and $a^\pm \df (\pm a)\vee 0$.
The inner product of two vectors $x$ and $y$ in $\Rd$ is denoted
as $x\cdot y$, or $\langle x,y\rangle$, $\abs{\,\cdot\,}$ denotes
the Euclidean norm, $x\transp$ stands for
the transpose of $x$, and
$\trace S$ denotes the trace of a square matrix $S$.

The term \emph{domain} in $\RR^{d}$
refers to a nonempty, connected open subset of the Euclidean space $\RR^{d}$. 
For a domain $D\subset\RR^{d}$,
the space $\cC^{k}(D)$ ($\cC^{k}_{b}(D)$), $k\ge 0$,
refers to the class of all real-valued functions on $D$ whose partial
derivatives up to order $k$ exist and are continuous (and bounded),
$\cC_{\mathrm{c}}^k(D)$ denotes its subset
 consisting of functions that have compact support,
and $\cC_0^k(D)$ the closure of $\cC_{\mathrm{c}}^k(D)$.
The space $\Lp^{p}(D)$, $p\in[1,\infty)$, stands for the Banach space
of (equivalence classes of) measurable functions $f$ satisfying
$\int_{D} \abs{f(x)}^{p}\,\D{x}<\infty$, and $\Lp^{\infty}(D)$ is the
Banach space of functions that are essentially bounded in $D$.
The standard Sobolev space of functions on $D$ whose generalized
derivatives up to order $k$ are in $\Lp^{p}(D)$, equipped with its natural
norm, is denoted by $\Sob^{k,p}(D)$, $k\ge0$, $p\ge1$.
In general, if $\mathcal{X}$ is a space of real-valued functions on $D$,
$\mathcal{X}_{\mathrm{loc}}$ consists of all functions $f$ such that
$f\varphi\in\mathcal{X}$ for every $\varphi\in\cC_{\mathrm{c}}(\mathcal{X})$.
Likewise, we define $\Sobl^{k, p}(D)$.

For a nonnegative function $f\in C(\RR^d)$, 
we use $\order(f)$ to denote the subspace of functions
$g\in C(\RR^d)$ such that
$\sup_{x\in\RR^d} \frac{\abs{g(x)}}{1 + f(x)} < \infty$.

\subsection{Description of the problem}\label{S1.2}
The controlled jump diffusion process $\process{X}$ in $\Rd$ is
governed by the It\^{o} equation
\begin{equation}\label{E1.1}
\D X_t \,=\, b_\circ(X_t,Z_t)\,\D t + \upsigma(X_t)\,\D W_t
+ \int_{\RR^m\setminus\{0\}} g(X_{t-},Z_t,\xi)\, \widetilde\cN(\D t, \D\xi)\,,
\quad X_0=x\in\Rd.
\end{equation}
Here, $W$ is a $d$-dimensional standard Wiener process, and $\widetilde\cN$
is a martingale measure in $\RR^m$, corresponding to a  Poisson
random measure $\cN$.
In other words, $\widetilde\cN(t, A)=\cN(t, A)-t \Uppi(A)$, with
$\Exp[\cN(t, A)]=t \Uppi(A)$ for any Borel subset $A$ in $\RR^m\setminus\{0\}$,
where $\Uppi$ is a finite
measure on $\RR^m\setminus\{0\}$.
The processes $W$ and $\cN$ are independent and defined on a complete
probability space $(\Omega, \sF, \Prob)$.
The control process $\process{Z}$ takes values in a compact metric space $\Act$,
is predictable with respect to $\sF_t$, and is \emph{non-anticipative}:
for $s<t$, $\bigl(W_t-W_s,\, \cN(t, \cdot)-\cN(s, \cdot)\bigr)$ is independent of
\begin{equation*}
\sF_s \,\df\, \text{the\ completion\ of\ }
\sigma\{X_0, Z_r, W_r, \cN(r, \cdot) \,\colon\, r\le s\}
\text{\ relative\ to\ } (\sF, \Prob)\,.
\end{equation*}
The process $Z$ is called an \emph{admissible} control,
and the set of all admissible control is denoted by $\Uadm$. 

Let $a= \frac{1}{2}\upsigma \upsigma\transp$.
We impose the following assumptions to guarantee existence of solution of \cref{E1.1}.

\begin{enumerate}[(A2)]
\item[\hypertarget{A1}{{(A1)}}]
\emph{Local Lipschitz continuity:\/}
for some constant $C_{R}>0$ depending on $R>0$, the functions
$\upsigma\,=\,\bigl[\upsigma^{ij}\bigr]\colon\RR^{d}\to\RR^{d\times d}$,
$b\colon\Rd\times\Act\to\Rd$, and $g\colon\Rd\times\RR^m\to\Rd$ satisfy
\begin{equation}\label{EA2}
\babs{b_\circ(x,\zeta)-b_\circ(y, \zeta)}^2 + \norm{\upsigma(x) - \upsigma(y)}^2
+ \int_{\RR^m\setminus\{0\}} \abs{g(x,\zeta, \xi)-g(y,\zeta, \xi)}^2\,\Uppi(\D\xi)
\,\le\, C_{R}\,\abs{x-y}^2
\end{equation}
for all $x,y\in \sB_R$ and $\zeta\in\Act$, where
$\norm{\upsigma}\df\sqrt{\trace(\upsigma\upsigma\transp)}$.
We also assume that $b$ and $g$ are continuous.

\medskip
\item[\hypertarget{A2}{{(A2)}}]
\emph{Affine growth condition:\/}
For some constant $C_0>0$, we have
\begin{equation}\label{EA3}
\sup_{\zeta\in\Act}\, \bigl\langle b_\circ(x, \zeta),x\bigr\rangle^{+} + \norm{\upsigma(x)}^{2}
+ \sup_{\zeta\in\Act}\, \int_{\RR^m\setminus\{0\}} \abs{g(x,\zeta, \xi)}^2\, \Uppi(\D\xi)
\,\le\,C_0 \bigl(1 + \abs{x}^{2}\bigr) \qquad \forall\, x\in\RR^{d}\,.
\end{equation}

\medskip
\item[\hypertarget{A3}{{(A3)}}]
\emph{Nondegeneracy:\/}
For each $R>0$, it holds that
\begin{equation*}
\sum_{i,j=1}^{d} a^{ij}(x)\eta_{i}\eta_{j}
\,\ge\,C^{-1}_{R} \abs{\eta}^{2} \qquad\forall\, x\in \sB_R\,,\ \forall\,
\eta=(\eta_{1},\dotsc,\eta_{d})\transp\in\Rd\,.
\end{equation*}
\end{enumerate}

With $\fB(\Rd)$ denoting the Borel $\sigma$-algebra of $\Rd$, we define
\begin{equation*}
\nu(x,\zeta,A) \,\df\,
\Uppi\bigl(\{\xi\in\RR^m\setminus\{0\}\,\colon\, g(x,\zeta, \xi)\in A\}\bigr)\,,
\qquad A\in\fB(\Rd)\,,
\end{equation*}
and
\begin{equation}\label{E-barnu}
\Bar\nu \,\df\, \nu(x,\zeta,\Rd) \,=\, \Pi\bigl(\RR^m\setminus\{0\}\bigr)\,.
\end{equation}
Note that
$\int_{\Rd}\abs{z}^2 \, \nu(x,\zeta,\D z)\le C_0 (1+ \abs{x}^2)$ by \hyperlink{A2}{(A2)}.
Also, \hyperlink{A2}{(A2)} and the finiteness of $\Pi$ imply
that $x\mapsto \int_{\Rd}z\, \nu(x,\zeta,\D z)$ has at most affine growth in $x$.
Moreover, since $g$ is continuous, it follows that
$(x, \zeta)\mapsto \int_{\Rd} f(z) \nu(x, \zeta, \D{z})$ is continuous
for every continuous function $f\in\order\bigl(\abs{z}^2\bigr)$.

It is well known that under hypotheses
\hyperlink{A1}{(A1)}--\hyperlink{A2}{(A2)},
the stochastic differential equation in
\cref{E1.1} has a unique strong solution for
every admissible control (see for example, \cite{GS72}).
By a Markov control, we mean an admissible control of the form $v(t,X_t)$
for some Borel measurable function $v\colon \RR_+\times\Rd\to\Act$.
If $v$ is independent of $t$, we call it a stationary Markov control, and the set
of all stationary Markov controls is denoted by $\Usm$.
The hypotheses in \hyperlink{A1}{(A1)}--\hyperlink{A3}{(A3)}
imply the existence of unique
strong solutions under Markov controls, that is, $X_t$ solves
\begin{equation*}
\D X_t \,=\, b_\circ(X_t, v(t, X_{t-}))\, \D t + \upsigma(X_t)\, \D W_t
+ \int_{\RR^m\setminus\{0\}} g(X_{t-}, v(t, X_{t-}), \xi)\, \widetilde\cN(\D t, \D\xi)\,,
\quad X_0=x\in\Rd\,.
\end{equation*}
Indeed, as established in \cite[Theorem~2.8]{Gyongy-96},
using the method of Euler's approximations, the diffusion
\begin{equation}\label{E-sde-aux}
\D{\Tilde X_t} \,=\,
b_\circ\bigl(\Tilde X_t,v(t,\Tilde X_t)\bigr)\,\D{t} -
\biggl(\int_{\RR^m\setminus\{0\}} g(\Tilde X_t,v(t,\Tilde X_t), \xi) \Pi(\D{\xi})\biggr)\D{t}
+ \upsigma(\Tilde X_t)\,\D{W_t}\,,
\end{equation}
with $X_0 = x\in \RR^d$,
has a unique strong solution for any Markov control $v$.
As shown in \cite[Theorem~14]{Skorokhod-89}, since
the L\'evy measure is finite,
the solution of \cref{E-sde} can be constructed in a piecewise fashion
by concatenating the solutions of \cref{E-sde-aux} between consecutive jumps
(see also \cite{Li-03}).
We mention here, parenthetically the work of Veretennikov \cite{Vr80},
which is probably the first one to establish
existence of strong solutions for equations with a bounded measurable drift.

\begin{definition}\label{D1.1}
Let $c\colon\Rd\times\Act\to\RR_+$ be a continuous function
which represents the \emph{running cost}.
Given a control $Z\in\Uadm$, the \emph{risk-sensitive penalty} is defined by
\begin{equation*}
\sE_x(c, Z) \,\df\,
\limsup_{T\to\infty} \, \frac{1}{T}\,
\log \Exp_x\Bigl[\E^{\int_0^T c(X_s, Z_s)\,\D s}\Bigr]\,,
\end{equation*}
and the optimal value is defined as
\begin{equation*}
\sE^* \,\df\, \inf_{x\in\Rd}\, \inf_{Z\in\Uadm}\, \sE_x(c,Z)\,.
\end{equation*}
An admissible control $Z^*$ is called \emph{optimal} if
$\sE_x(c,Z^*)=\inf_{Z\in\Uadm}\, \sE_x(c,Z)$ for all $x\in\Rd$.
\end{definition}

\begin{definition}
We define the semilinear operator $\cI$ by
\begin{equation}\label{E-cI}
\cI f(x) \,\df\, \trace\bigl(a(x)\grad^2 f(x)\bigr) 
+ \inf_{\zeta\in\Act}\,\bigl\{I[f,x,\zeta] + b(x,\zeta)\cdot \grad f(x) + c(x,\zeta) f(x)\bigl\}
\end{equation}
for $f\in \cC^2(\Rd)\cap\cC_b(\Rd)$, where 
\begin{equation}\label{E-I}
\begin{aligned}
I[f,x,\zeta] &\,\df\, \int_{\Rd} \bigl(f(x+z)-f(x)\bigr)\, \nu(x,\zeta,\D{z})\,,\\
b(x,\zeta) &\,\df\, b_\circ(x,\zeta) - \int_\Rd z\,\nu(x,\zeta,\D{z})\,.
\end{aligned}
\end{equation}

We also define the operators $\cA$ and $\cA^c$ mapping
$\cC^2(\Rd)$ to $\cC(\Rd\times\Act)$ by
\begin{equation}\label{EcA}
\begin{aligned}
\cA u(x,\zeta) &\,\df\, \trace(a\grad^2 u(x)) + I[u,x,\zeta] + b(x,\zeta)\cdot\grad u(x)\,,
\\[3pt]
\cA^c u(x,\zeta) &\,\df\, \cA u(x,\zeta) + c(x,\zeta) u(x)\,,
\end{aligned}
\end{equation}
and for $v\in\Usm$, we often use the simplifying notation
\begin{equation}\label{Esimpl}
I_v[f, x]= I[f, x, v(x)]\,,\quad \ b_v(x) \,\df\, b\bigl(x,v(x)\bigr)\,,\quad\text{and\ \ }
c_v(x) \,\df\, c\bigl(x,v(x)\bigr)\,.
\end{equation}
\end{definition}

For many of the results we enforce the following Foster--Lyapunov condition on the dynamics.

\begin{assumption}\label{A1.1}
In (a) and (b) below, $\Lyap \in \cC^2(\Rd)$ is some function taking
values in $[1,\infty)$, $\widehat{C}$ is a positive constant,
and $\cK\subset\Rd$ is a compact set.
\begin{enumerate}
\item[(a)]
If $c$ is bounded, we assume without loss of generality
that $\inf_{\Rd\times\Act}\,c=0$, and that there exists some
constant $\gamma>\norm{c}_\infty$ satisfying
\begin{equation}\label{EA1.1A}
\cA \Lyap(x,\zeta)
\,\le\, \widehat{C} \Ind_{\cK}(x)-\gamma \Lyap(x)\qquad \forall\,(x,\zeta)
\in\Rd\times\Act\,.
\end{equation}

\item[(b)] If $c$ is not bounded, we assume
that there exists an inf-compact function $\ell$
such that $\ell-c$ is inf-compact, and
\begin{equation}\label{EA1.1B}
\cA \Lyap(x,\zeta)
\,\le\, \widehat{C} \Ind_{\cK}(x)-\ell(x) \Lyap(x)
\qquad \forall\,(x,\zeta)\in\Rd\times\Act\,.
\end{equation}
\end{enumerate}
In both cases (a) and (b) we also assume that the map
\begin{equation}\label{EA1.1C}
x\,\mapsto\,\int_{\Rd} \max_{\zeta\in\Act}\,\Lyap\bigl(x+g(x, \zeta, \xi)\bigr)\, \Pi(\D{\xi})
\end{equation}
is locally bounded.
\end{assumption}

As well known (see \cite{ABS19}), if $a$ and $b$ are bounded,
it might not be possible to find an unbounded function $\ell$
satisfying \cref{EA1.1B}.
This is the reason for \cref{EA1.1A}.

Before we proceed further, let us
exhibit two classes of dynamics satisfying \cref{A1.1}.

\begin{example}\label{Ex1.1}
Suppose that
$\sup_{\zeta\in\Act} b(x,\zeta)\cdot x\le - \kappa \abs{x}^\alpha$
outside a compact set
for some $\alpha\in [1, 2]$, and $a$ is bounded. 
Define $\Lyap (x)\,\df\,\exp(\theta \sqrt{\abs{x}^2+1})$.
Then an easy calculation shows that
\begin{equation*}
\begin{aligned}
\cI \Lyap(x) &\,\le\, \kappa_1 \biggl(\Ind_{\cK_1}(x)
+ \theta\frac{1}{\sqrt{\abs{x}^2+1}}
+ \theta^2 \frac{\abs{x}^2}{\abs{x}^2+1}\biggr)\Lyap(x)\\
&\mspace{150mu}- \theta\frac{\abs{x}^{\alpha}}{\sqrt{\abs{x}^2+1}}\Lyap(x)
+ \int_{\Rd} \bigl(\Lyap(x+z)-\Lyap(x)\bigr)\nu(x,\zeta,\D{z})
\end{aligned}
\end{equation*}
for some constant $\kappa_1$, and a compact set $\cK_1$.
Now suppose that $\supp(\nu(x,\zeta,\cdot))\subset B(0, \eta)$ for all $x\in\Rd$
and $\zeta\in\Act$ (i.e., $g$ is bounded).
Then, since
$\Lyap(x+z) \le \Lyap(x)\exp\bigl(2\theta\abs{z}\bigr)$
by the mean-value theorem, we obtain
\begin{equation*}
\int_{\Rd} \bigl(\Lyap(x+z)-\Lyap(x)\bigr)\,\nu(x,\D{z}) \,\le\,
\Lyap(x) (\E^{2\theta\eta}-1)\Bar\nu\,.
\end{equation*}
Thus, if $\alpha>1$,
and we choose $\ell\sim\abs{x}^{\alpha-1}$, \cref{EA1.1B} is satisfied.
For $\alpha=1$, if we assume that $\eta$ is sufficiently small so that 
\begin{equation*}
\int_{\Rd}(\E^{2\theta \abs{z}}-1)\nu(x,\zeta,\D{z}) \,<\,
\theta_1\theta
\end{equation*}
for some $\theta_1<1$ and all $\theta\in(0,1)$,
then by choosing $\theta$ suitably small we obtain \cref{EA1.1A}.
\end{example}

\begin{example}
If the measure $\nu$ is heavy-tailed, it is not possible
to use exponential Lyapunov functions $\Lyap$
like the one used in \cref{Ex1.1}.
Suppose, for simplicity, that $\nu$ is translation invariant, that is, 
$g(x,\zeta,\xi)$ does not depend on $x$ and $\zeta$, and that
$\int \abs{z}^\theta\nu(\D{z})<\infty$ for $\theta\in[0,\theta_\circ)$
for some $\theta_\circ>1$,
and $\int \abs{z}^{\theta_\circ}\nu(\D{z})=\infty$.
In such a case, \cref{E-sde} has a strong solution, even though
\hyperlink{A2}{(A2)} is not satisfied if $\theta_\circ<2$.
With
\begin{equation*}
\fJ[f](x)\,\df\,\int_{\RR^m\setminus\{0\}} \bigl(f(x+z)-f(x)- z\cdot\nabla f(x)\bigr)\,
\nu(\D{z})\,,
\end{equation*}
we write $\cA$ as
\begin{equation}\label{Ex1.2Z}
\cA f(x,\zeta) \,=\, \trace(a\grad^2 f(x)) + \fJ[f](x)
+ b_\circ(x,\zeta)\cdot\grad f(x)\,.
\end{equation}
Note that the drift $b_\circ$ in \cref{E1.1} appears here,
and not the modified $b$ in \cref{E-I}.
Suppose that there exists a positive definite symmetric matrix $S\in\RR^{d\times d}$
such that 
\begin{equation}\label{Ex1.2A}
x\transp S b_\circ(x,\zeta) \,\le\, C_0 - C_1 (x\transp Sx)\,.
\end{equation}
This is, for example, the case for stable linear drifts with a nonlinear
perturbation that has sublinear growth.
Assume also that $\upsigma$ has sublinear growth.
Consider a Lyapunov function $\Lyap\in\cC^2(\Rd)$ which agrees with
$(x\transp Sx)^{\nicefrac{\theta}{2}}$ outside some ball.
Then \cref{Ex1.2A} and the sublinear growth of $\upsigma$ imply that
for every $\epsilon>0$, there exists a constant $C_0'(\epsilon)$ such that
\begin{equation}\label{Ex1.2B}
\trace(a\grad^2 \Lyap(x)) + b_\circ(x,\zeta)\cdot\grad \Lyap(x)
\,\le\, C_0'(\epsilon) - (C_1-\epsilon)\theta \Lyap(x)\,.
\end{equation}
On the other hand, as shown in \cite[Lemma~5.1]{APS19},
if $\theta<\theta_\circ$, then
$x\mapsto\abs{x}^{1-\theta}\fJ[\Lyap](x)$
vanishes at infinity if $\theta\in[1,2)$ and
$\fJ[\Lyap](x) \sim \abs{x}^{\theta-2}$ if $\theta\ge2$.
Thus, \cref{Ex1.2B} shows that \cref{EA1.1A} is satisfied for any
$\gamma<C_1 \theta_\circ$
We mention parenthetically that \cref{EA1.1A} holds for a Lyapunov function
taking the form above in the case of multiclass queueing networks in
the Halfin--Whitt regime with reneging \cite[Theorem~3.5]{APS19}.

Examining the proof of \cite[Lemma~5.1]{APS19}, the estimates of the growth of
$\fJ[\Lyap](x)$ depend only on the values
of $\int_{\sB_r^c}\abs{z}^{\theta}\nu(\D{z})$ and
$\int_{\sB\setminus\{0\}}\abs{z}^{2}\,\nu(\D{z})$.
Therefore, scaled versions of these estimates can be derived to address general measures
$\nu(x,\D{z})$ encountered here.
But it should be clear from the preceding discussion that, in general, if
$\sup_{\zeta\in\Act} b_\circ(x,\zeta)$ has strict
sublinear growth in $x$, and $\nu$ is heavy-tailed, that is,
$\int \abs{z}^{\theta_\circ}\nu(\D{z})=\infty$ for some $\theta_0>0$,
then \cref{EA1.1B} cannot be satisfied.
This is because, according to \cite[Corollary~5.3]{APS19}, the resulting process
is at most strictly subgeometrically ergodic, whereas \cref{EA1.1B}
implies geometric ergodicity.
\end{example}

We are now ready to state one of our main results,
whose proof is in \cref{S-risk}.
This is \cref{T1.1} below, which establishes the existence of
an optimal stationary Markov control, as well as verification of optimality.
Recall the definition of $\order(f)$ from \cref{Snot}.

\begin{theorem}\label{T1.1}
Grant \hyperlink{A1}{\ttup{A1}}--\hyperlink{A3}{\ttup{A3}} and \cref{A1.1}.
Then the following hold.
\begin{itemize}
\item[(a)]
There exists a unique positive function
$V\in\Sobl^{2,p}(\Rd)\cap\order(\Lyap)$ satisfying
\begin{equation*}
\cI V(x) \,=\, \sE^* \, V(x)\ \ \text{a.e.\  in\ }\Rd\,,
\quad V(0)=1\,.
\end{equation*} 
\item[(b)]
There exists a measurable $v\colon\Rd\to\Act$ satisfying
\begin{equation}\label{ET1.1B}
 I_v[V, x] + b_v(x)\cdot\grad V(x) + c_v(x) V(x)
\,=\, \min_{\zeta\in\Act}\,\bigl\{I[V, x, \zeta]
+ b(x,\zeta)\cdot \grad V(x) + c(x,\zeta) V(x)\bigl\}\,.
\end{equation}
\item[(c)]
A stationary Markov control $v$ is optimal
in the sense of \cref{D1.1}
if and only if it satisfies \cref{ET1.1B} a.e.\ in $\Rd$.
Moreover, for such a control $v$ we have
 $\sE_x(c,v)=\sE^*$ for all $x\in\Rd$, or in other words,
the risk-sensitive value does not depend on the initial condition $x\in\Rd$.
\end{itemize}
\end{theorem}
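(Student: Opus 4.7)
The plan is to follow the second approach outlined in the introduction: obtain the eigenpair $(V,\sE^*)$ as a limit of Dirichlet principal eigenpairs $(V_n,\lambda_n)$ on the balls $\sB_n$, whose existence and spectral properties are provided by the bounded-domain theory of \cref{S-bounded}. Normalizing $V_n(0)=1$, the heart of the matter is to show that $\{V_n\}$ is locally bounded in $\Sob^{2,p}$ and that $\lambda_n\to\sE^*$, so that interior elliptic estimates together with a diagonal extraction and the stability of the integrand $I[\cdot,x,\zeta]$ under $\Sobl^{2,p}$ convergence (which uses the finiteness of $\Pi$) produce a limit $V\in\Sobl^{2,p}(\Rd)$ solving $\cI V=\sE^*V$.

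First I would establish a uniform upper bound $V_n\le C(1+\Lyap)$. Using \cref{A1.1}, $\Lyap$ is a strict supersolution for $\cI$ outside some compact $\cK$ once we shift by a constant, with the non-local contribution controlled by \cref{EA1.1C}; a comparison argument on $\sB_n$ then gives the claim uniformly in $n$. The stochastic representation of $V_n$ (as a Feynman--Kac-type expectation involving the exit time from $\sB_n$) applied to a near-optimal admissible control yields $\limsup\lambda_n\le\sE^*$. The matching lower bound $\liminf\lambda_n\ge\sE^*$ is the subtle step, for which I would invoke \cref{L4.2}: since $\Pi$ is finite, the non-local term $I[V_n,x,\zeta]$ is a bounded zeroth-order perturbation of a local uniformly elliptic operator, and a generalized Harnack inequality applies, bypassing the failure of the classical Harnack inequality flagged in the introduction.

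With $V$ on $\Rd$ in hand, the measurable selector $v$ in part (b) follows from a standard measurable selection theorem, since $\zeta\mapsto I[V,x,\zeta]+b(x,\zeta)\cdot\grad V(x)+c(x,\zeta)V(x)$ is continuous on the compact set $\Act$. For uniqueness within $\order(\Lyap)$ I would exploit the stochastic representation: applying It\^o's formula to $\E^{\int_0^{t\wedge\uptau_R}(c_v(X_s)-\sE^*)\,\D s}\,V(X_{t\wedge\uptau_R})$ under the Markov control $v$ produces a local martingale, and the Lyapunov bound together with \cref{EA1.1C} justifies passing $R\to\infty$, giving
\begin{equation*}
V(x) \,=\, \Exp^v_x\Bigl[\E^{\int_0^T (c_v(X_s)-\sE^*)\,\D s}\,V(X_T)\Bigr]\qquad\forall\,T\ge 0.
\end{equation*}
A competing eigenpair $(V',\lambda')$ with $V'\in\order(\Lyap)$ must then satisfy the same identity with $\lambda'$ in place of $\sE^*$, and a monotonicity-in-$c$ argument in the spirit of \cite{ABS19}, adapted to the non-local setting by using that $\Pi$ is finite, forces $\lambda'=\sE^*$ and $V'=V$ after normalization.

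Verification of optimality in (c) uses the same representation. For a selector $v$ of \cref{ET1.1B}, the displayed identity and the two-sided bound $V\asymp 1+\Lyap$ (combined with the geometric ergodicity implied by \cref{A1.1}) give $\sE_x(c,v)=\sE^*$ for every $x$. Conversely, for an arbitrary $Z\in\Uadm$ one obtains
\begin{equation*}
V(x) \,\le\, \Exp^Z_x\Bigl[\E^{\int_0^T (c(X_s,Z_s)-\sE^*)\,\D s}\,V(X_T)\Bigr],
\end{equation*}
with strict inequality unless $Z_s$ is an almost-sure minimizer in \cref{ET1.1B} along the trajectory; this yields $\sE_x(c,Z)\ge\sE^*$ and characterizes optimal stationary Markov controls as the a.e.\ selectors of the HJB. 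The principal obstacle throughout is the non-local term: the failure of Harnack, the justification of limit passage in $I[\cdot,x,\zeta]$, and the control of the boundary contribution in the It\^o/martingale estimates all hinge on the finiteness of $\Pi$ and on \cref{EA1.1C}, which allow treating the jump integral as a zeroth-order perturbation controlled by the Lyapunov function.
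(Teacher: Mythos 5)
Your overall architecture (Dirichlet eigenpairs on $\sB_n$, the Lyapunov function as a barrier, stochastic representation, measurable selection) matches the paper's, but two steps as you describe them would not go through.

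First, the identification $\lim_n\lambda_n=\sE^*$. You assert that the lower bound $\liminf_n\lambda_n\ge\sE^*$ follows from \cref{L4.2} together with a generalized Harnack inequality. That is a misattribution: \cref{L4.2} only yields $\lim_n\lambda_n\ge 0$, and its role in the paper is merely to guarantee that $c-\lambda_n$ is eventually dominated by $\ell$ (or $\gamma$) outside a compact set, so that $\Lyap$ can serve as a barrier; the generalized Harnack inequality of Gilbarg--Trudinger is used only to get local uniform bounds on the eigenfunctions after they are scaled to touch $\Lyap$. Neither ingredient bounds $\lim_n\lambda_n$ from below by $\sE^*$. The inequality $\sE^*\le\lamstr$ is instead obtained by a verification argument through the \emph{linear} eigenvalue theory for $\cA^c_v$ under a fixed Markov control: one needs \cref{T4.1}, and in particular \cref{L4.5}, which shows $\lambda^v=\sE^v_x(c)$. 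That lemma is genuinely delicate because $\Psi_v$ need not be bounded away from zero, so the naive Feynman--Kac upper bound $\Psi_v(x)\ge(\inf\Psi_v)\Exp^v_x[\E^{\int_0^T(c_v-\lambda^v)\D{s}}]$ is unavailable; the paper circumvents this by perturbing the cost to $\Breve{c}_n$, passing to the limit with the Lyapunov barrier, and invoking the uniqueness result \cref{T4.2}. Your proposal contains no substitute for this step, and it is the crux of part (c) as well as of the equality $\lamstr=\sE^*$.

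Second, the necessity direction of (c). Arguing that a non-selector control produces a strict inequality in $V(x)\le\Exp^Z_x[\E^{\int_0^T(c-\sE^*)\D{s}}V(X_T)]$ for each finite $T$ does not show suboptimality: a strict inequality at every $T$ is perfectly compatible with the same exponential growth rate, i.e., with $\sE_x(c,Z)=\sE^*$. The paper's argument is structurally different: for an optimal stationary Markov $v$ it first derives the exit-time representation inequality for $V$ under $v$ (its (PT1.1E)), then uses $\cA^c_v V\ge\lamstr V$, $\cA^c_v\Psi_v=\lambda^v\Psi_v$, and the already-established chain $\lamstr=\sE^*=\sE^v=\lambda^v$ to conclude via the strong maximum principle (as in \cref{T4.2}) that $V=\Psi_v$, which forces $\cA^c_vV=\lamstr V$ a.e., i.e., $v$ is a selector. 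Relatedly, your uniqueness argument leans on an unspecified ``monotonicity-in-$c$'' device; in this non-local setting the paper explicitly cannot use the twisted-process machinery of \cite{ABS19} and instead runs uniqueness through the stochastic representation \cref{EL4.3B} and \cref{T4.2}. You would need to supply these linear-operator results (existence, representation, $\lambda^v=\sE^v$, and uniqueness in $\order(\Lyap)$) before the verification claims in your last paragraph can be justified.
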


\begin{remark}
We say that $\nu$ has locally compact support if for every $r>0$ there
exist $R=R(r)$ such that
$\nu(x,\zeta,\sB_R^c)=0$ for all $x\in\sB_r$ and $\zeta\in \Act$.
Concerning \cref{T1.1}, if
$x\to c(x,\zeta)$ is locally H\"older continuous, and $\nu$ has
locally compact support,
then $V\in C^{2,\delta}(\Rd)\cap\order(\Lyap)$, for some $\delta\in(0,1)$,
by elliptic regularity.
To see this, let $f$ be a Lipschitz-continuous function on $\Rd$, with
Lipschitz constant $\mathrm{Lip}_f$.
Then, using \cref{EA2,E-barnu} and the Cauchy--Schwarz inequality, we obtain
\begin{equation*}
\begin{aligned}
&\babss{\int_\Rd f(x+z)\nu(x,\zeta,\D{z}) - \int_\Rd f(y+z)\nu(y,\zeta,\D{z})}\\
&\mspace{50mu}\,\le\,
\int_{\RR^m\setminus\{0\}}
\babs{f\bigl(x+g(x,\zeta,\xi)\bigr)-f\bigl(y+g(y,\zeta,\xi)\bigr)}\,\Uppi(\D\xi)\\
&\mspace{50mu}\,\le\,\Bar\nu\,\mathrm{Lip}_f\,\bigl(1+\sqrt{C_R}\bigr)\,\abs{x-y}
\qquad\forall\,x,y\in\sB_r\,.
\end{aligned}
\end{equation*}
This shows that the map
$f\mapsto I[f,\cdot\,, \zeta]$ preserves local Lipschitz continuity, uniformly in $\zeta$.
Then the regularity of $V$ follows from standard elliptic theory.
In the case when $\nu$ does not have locally compact support, sufficient conditions
for the regularity of $V$ can be obtained by combining Lemma~5.3 and Theorem~5.3
of \cite{AB-20a}.
\end{remark} 

\begin{remark}
One can also consider a risk-sensitive maximization problem under the
assumptions of \cref{T1.1}. This can be done using the results of
\cref{S-risk} together with the approach of \cite[Theorem~3.1]{BS18}.
\end{remark}

\begin{remark}
The results of \cref{T1.1} also hold under more general hypotheses. For instance,
if we restrict our class of controls to the set of all
stationary Markov controls and assume
that $x\mapsto \int_{A} \frac{\abs{z}}{1+\abs{z}^2} \nu(x, \D{z})$ is continuous and
bounded for all $A\in\fB(\RR^m\setminus\{0\})$, then it is known that the martingale
problem is well-posed
and the family of martingale solutions are strong Markov \cite[Theorem~4.3]{ST75}.
All the results of this article hold in this set up. 
\end{remark}

\begin{remark}
The risk-sensitive minimization problem with a near-monotone hypothesis
on the running cost
is also of interest.  Here, we can replace the blanket stability in
\cref{A1.1} with a \emph{stabilizability} hypothesis, namely that
\cref{EA1.1B} holds under some Markov control.
Then existence of a principal eigenfunction $V$ on the whole space can be shown.
However, asserting that the eigenvalue equals $\sE^*$ and the verification of
optimality results require additional hypotheses; see \cite{AB18}.
\end{remark}

We also consider a risk-sensitive maximization problem without the
blanket stability hypotheses in \cref{A1.1}.
This assumption is replaced by a
near-monotone hypothesis on the running cost
(see \hyperlink{H}{(H)} in \cref{S-max}), which
penalizes the growth of the
process at infinity under any optimal control.
Our main result on the maximization problem, which also
requires \hyperlink{A4}{(A4)} and \hyperlink{H}{(H)} in \cref{S-max},
can be roughly stated as follows.

\begin{theorem}[Informal statement]
Under \hyperlink{A1}{\ttup{A1}}--\hyperlink{A4}{\ttup{A4}} and
\hyperlink{H}{\ttup{H}},
results analogous of \cref{T1.1} hold for the maximization problem
in \cref{E-max2}.
\end{theorem}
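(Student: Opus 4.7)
The plan is to mirror the proof of \cref{T1.1}, with the near-monotone hypothesis \hyperlink{H}{(H)} taking over the role played by the Foster--Lyapunov condition of \cref{A1.1} in controlling the eigenfunction at infinity. Define the maximization operator $\cI^{\max}$ by replacing $\inf_{\zeta\in\Act}$ with $\sup_{\zeta\in\Act}$ in \cref{E-cI}. For each $R>0$, the Dirichlet eigenvalue theory developed in \cref{S-bounded} produces a principal eigenpair $(V_R,\lambda_R^*)$ for $\cI^{\max}$ on $\sB_R$ with $V_R>0$ in $\sB_R$, $V_R=0$ on $\partial\sB_R$, and $V_R(0)=1$. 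Domain monotonicity gives $\lambda_R^*\nearrow\Lambda^*\le\infty$; applying the It\^o--Krylov formula to $\log V_R$ along an admissible trajectory, one obtains $\Lambda^*\le\sE^{**}\df\sup_{Z\in\Uadm}\sE_x(c,Z)$, and the near-monotone condition then forces $\Lambda^*<\infty$.

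The central analytic step is a locally uniform bound on $V_R$ as $R\to\infty$. I would use the stochastic representation
\begin{equation*}
V_R(x)\,=\,\Exp_x^{v_R}\Bigl[\E^{\int_0^{\uptau_R}(c_{v_R}(X_s)-\lambda_R^*)\D s}\,V_R(X_{\uptau_R})\,\Ind_{\{\uptau_R<\infty\}}\Bigr]
\end{equation*}
inherited from the bounded-domain theory, together with \hyperlink{H}{(H)}, which provides a constant $\delta>0$ and a compact set $\cK$ such that $c(x,\zeta)\le\Lambda^*-\delta$ for all $x\notin\cK$ and $\zeta\in\Act$. Decomposing the trajectory at the first hitting time of $\cK$ and using the strong Markov property, the exponential discount forces $V_R(x)$, for $x$ in any fixed ball, to be controlled by $\sup_\cK V_R$. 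A generalized Harnack inequality on $\cK$, treating the nonlocal integral in $\cI^{\max}$ as a zeroth-order perturbation of the local PDE as in the approach outlined around \cref{L4.2}, then bounds $\sup_\cK V_R$ in terms of $V_R(0)=1$. Finiteness of $\Pi$ is crucial here, both for the perturbative Harnack step and for the integrability of the nonlocal term under the local bounds so obtained.

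With a locally uniform bound on $V_R$ in hand, interior $W^{2,p}$ estimates and a diagonal extraction yield a positive $V\in\Sobl^{2,p}(\Rd)$ satisfying $\cI^{\max}V=\Lambda^* V$ on $\Rd$. A standard measurable selection argument produces $v^*\in\Usm$ attaining the supremum pointwise in the analogue of \cref{ET1.1B}. Verification of optimality then follows the pattern of \cref{T1.1}(c): apply the It\^o--Krylov formula to $\log V$, stop at $\uptau_R$, and use \hyperlink{H}{(H)} to pass to the limit so as to identify $\sE_x(c,v^*)=\Lambda^*=\sE^{**}$ and to characterize \emph{every} optimal stationary Markov control as a measurable selector of the supremum. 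Uniqueness of the principal eigenfunction (up to a positive multiplicative constant) follows from the same stochastic representation combined with the minimality/maximality argument used in the minimization case, together with the monotonicity of the principal eigenvalue in $c$.

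The main obstacle will be the uniform local bound on $V_R$. In the minimization problem one could deploy $\Lyap$ from \cref{A1.1} as a global upper barrier, but with only \hyperlink{H}{(H)} available the bound has to be squeezed out of the probabilistic representation, which is itself delicate since no Doob $h$-transform is readily available for the jump-diffusion model. A further subtlety is that the supremum in $\zeta$ interacts less gracefully with comparison/subsolution arguments than the infimum does, so the perturbative Harnack step must be set up so that the bound is uniform over $\zeta\in\Act$ despite the nonconvex dependence introduced by $\sup_\zeta$.
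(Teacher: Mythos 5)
Your overall skeleton is the paper's (\cref{S-max}): Dirichlet eigenpairs on balls, the monotone limit $\varrho_*$ of the eigenvalues, the hitting-time representation plus \hyperlink{H}{(H)} to locate the maximum of the eigenfunctions in a fixed compact set, measurable selection, and verification. But the step you yourself flag as the main obstacle --- bounding $\sup_{\cK}V_R$ in terms of $V_R(0)=1$ by a perturbative Harnack inequality --- is circular as stated. The generalized Harnack estimate for $\Lg w+(c-\varrho)w=-\sJ$ gives $\sup_{\cK}w\le\kappa\bigl(\inf_{\cK}w+\norm{\sJ}_{L^d}\bigr)$, and the source $\sJ(x)=\int w(x+z)\,\nu(x,\D z)$ is controlled only by the global supremum of $w$, which your hitting-time decomposition bounds by $\sup_{\cK}w$ itself; the resulting inequality $\sup_\cK w\le\kappa\,w(0)+\kappa'\sup_\cK w$ does not close unless $\kappa'<1$, and the paper explicitly notes that a clean Harnack inequality fails for this class of operators. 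The paper sidesteps this entirely in \cref{L5.2}: it normalizes $w_n$ by $\sup_{\sB_{r_\circ}}w_n$ rather than by $w_n(0)$, so that $\widetilde w_n\le 1$ globally with maximum $1$ attained on $\overline\sB_{r_\circ}$; the source term is then trivially bounded, interior estimates give compactness, and positivity (hence the final renormalization at $0$) comes from the strong maximum principle applied to the limit. No uniform-in-$n$ comparison between $\sup_\cK V_R$ and $V_R(0)$ is ever needed.

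Three further points your sketch glosses over. First, the inequality $\Hat\sE^*\le\varrho_*$ cannot be obtained by applying It\^o's formula to $\Phi_*$ (or $\log\Phi_*$) alone, because $\Phi_*$ vanishes at infinity, so $\inf_{\Rd}\Phi_*=0$ and the exponential functional cannot be bounded below by $\bigl(\inf\Phi_*\bigr)\Exp\bigl[\E^{\int c}\bigr]$; the paper's \cref{L5.3} perturbs to $\phi=\Phi_*+\varepsilon\chi$ with a cutoff $\chi$ so that $\inf\phi>0$ at the cost of a $\delta$ in the eigenvalue. Second, \hyperlink{A4}{(A4)} and \cref{L5.1} are not decorative: they are what forces $\Phi_*(x)\to0$ as $\abs{x}\to\infty$, which in turn justifies the stochastic representation \cref{EL5.2B} and the uniqueness argument; your proposal lists (A4) but never uses it. Third, the ``only if'' half of the verification result is not a routine limit passage: for an arbitrary optimal $v\in\Usm$ one must first show that the principal eigenvalue of the \emph{linear} operator $\cA^c_v$ on $\Rd$ exceeds $\underline{C}$ before the bounded-eigenfunction machinery applies to it, and the paper does this by a separate convexity/continuity argument in the perturbation $t\mapsto\Hat\lambda(\cA^c_v+t\Ind_\sB)$, deriving a contradiction with optimality otherwise. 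These gaps are all repairable, but each requires an idea not present in your outline.
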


For detailed statements and their proofs we refer to \cref{S-max}.

In concluding this section, we discuss the difficulties encountered in
extending the results to the case where $\Pi(\RR^m\setminus\{0\})=\infty$.
To our understanding, this class of problems are more
difficult because of the hurdles appearing in the
study of associated eigenvalue problems and nonlinear Dirichlet problems.
Note that when $\Pi$ has finite mass we can treat the nonlocal term as a \emph{zeroth} order
perturbation, but the same is not true when $\Pi$ is not finite.

There is a large body of work dealing with nonlinear nonlocal operators where the nonlocal
kernel resembles the fractional Laplacian. These operators are called stable-like
by Bass \cite{Bass-09}.
However, whereas Harnack estimates have been established for such
operators (see \cite{Caff-Silv-09}), they do not cover the case
when a zeroth order term, that is, the term due to $c$, is present.

In summary then, we can distinguish two main classes of operators.
The first, is the class of operators studied in this paper, which contain
a non-degenerate Brownian motion term and a nonlocal term with finite characteristic
measure, but whose kernel has no particular regularity and could be singular with
respect to the Lebesgue measure.
As mentioned earlier, for stochastic networks in the Halfin--Whitt regime, these operators
correspond to networks with asymptotically negligible service interruptions.

For the second class, we replace
$\trace(a\grad^2 f(x)) + \fJ[f](x)$ in \cref{Ex1.2Z} with a stable-like operator.
For such operators, the nonlocal term has a nice density, and the resulting process is
open set irreducible \cite[Theorem~3.1]{APS19}.
Such operators are encountered in stochastic networks with heavy-tailed arrivals
\cite[Section~4.1]{APS19}.
Risk-sensitive control for this class of systems remains an open problem
primarily due to the lack of Harnack estimates mentioned earlier.

\section{The eigenvalue problem in bounded domains}\label{S-bounded}

In this section we consider the principal eigenvalue problem for nonlocal
operators on bounded domains and establish several properties.
The assumptions here are more general than
\hyperlink{A1}{(A1)}--\hyperlink{A3}{(A3)}, and the proofs
are purely analytical, and devoid of probabilistic arguments.
These results are crucial for the
study of the risk-sensitive control problems appearing later in the paper. 
The proofs of the results stated in this section can be
found in \cref{S-proofs}.

Let $D$ be a bounded smooth domain in $\Rd$.
Without any loss of generality we may assume that $0\in D$.
Let us point out that  compactness of $\Act$ and nonnegativity of
$c$ are not required in this section.
We define the (uncontrolled) linear operators
 $\sA$ and $\sA^c$ by
\begin{equation}\label{EsA}
\begin{aligned}
\sA f(x) &\,\df\,
\trace\bigl(a(x)\grad^2 f(x)\bigr) + I[f,x] +b(x)\cdot \grad u(x) \,,\\
\sA^c f(x) &\,\df\, \sA f(x) + c(x) f(x)\,,
\end{aligned}
\end{equation}
with $I[f,x]$ given by \cref{E-I}, with $\nu$ not depending on the parameter $\zeta$.

\Cref{A2.1} which follows, is enforced throughout this section,
without further mention.
\begin{assumption}\label{A2.1}
The following hold.
\begin{enumerate}
\item[(1)]
The map $x\mapsto a(x)$ is continuous in $\Bar D$, and
there exists a positive constant $\upkappa$ such that
$\upkappa \mathbb{I} \le a(x) \le \upkappa^{-1} \mathbb{I}$
for all $x\in\Bar D$,
where $\mathbb{I}\in\RR^{d\times d}$ denotes the identity matrix.

\item[(2a)] For the operators $\sA$ ad $\sA^c$ in \cref{EsA}:
$b\colon D\to\Rd$ and $c\colon D\to\RR$ are Borel measurable and bounded,
and $x\mapsto\nu(x,\Rd)$ is locally bounded.

\item[(2b)] For the operator $\cI$ in \cref{E-cI}:
$b\colon D\times\Act\to\Rd$ and $c\colon D\times\Act\to\RR$ are continuous and bounded,
and $(x, \zeta)\to \sup_{\zeta\in\Act}\nu(x, \zeta, \Rd)$ is locally bounded.
\end{enumerate}
\end{assumption}

Next, we define the generalized Dirichlet principal eigenvalue $\lambda_D$
of $\cI$ (or $\sA^c$) on a domain $D$.
Let  $\cC_{b,+}(\Rd)$
denote the cone in $\cC_{b}(\Rd)$ 
consisting of nonnegative functions.
We define
\begin{equation*}
\Uppsi^+(\lambda) \,\df\, \bigl\{\psi\in \cC_{b,+}(\Rd)\cap\Sobl^{2, d}(D)\,\colon\,
\psi>0 \text{\ in\ } D\,,\ \cI\psi(x) -\lambda\psi\le 0 \text{\ in\ } D\bigr\}\,,
\end{equation*}
and let
\begin{equation}\label{E-lamD}
\lambda_D(\cI) \,\df\, \inf\,\bigl\{\lambda\in\RR\,\colon\,
\Uppsi^+(\lambda)\ne \varnothing\bigr\}\,.
\end{equation}
The eigenvalue $\lambda_D(\sA^c)$ is defined in the same manner.
 
The first main result of this section is the following.
Its proof relies on the
nonlinear Krein--Rutman theorem in \cite{A18}.

\begin{theorem}\label{T2.1}
Let $D$ be a $\cC^{1,1}$ bounded domain in $\Rd$.
There exists a unique $\psi_D\in \cC_{b,+}(\Rd)\cap\Sobl^{2,p}(D)$, $p>d$,
satisfying
\begin{align*}
\cI \psi_D &\,=\, \lambda_D(\cI)\, \psi_D \quad \mbox{in\ } D\,,\\
\psi_D &\,=\, 0\quad \text{in\ } D^c\,,\\
\psi_D &\,>\, 0 \quad \text{in\ } D\,, \quad \psi_D(0)=1\,.
\end{align*}
Moreover, if $u\in \cC_{b,+}(\Rd)\cap\Sobl^{2,p}(D)$, $p>d$,
is positive in $D$ and satisfies
\begin{equation*}
\cI u \,\le\, \lambda \, u \quad \text{in\ } D\,,
\end{equation*}
for some $\lambda\in \RR$,
then, either $\lambda>\lambda_D(\cI)$, or $\lambda=\lambda_D(\cI)$ and
$u=\kappa \psi_D$ for some $\kappa>0$.
In addition, the  assertions above hold for the operator $\sA^c$.
\end{theorem}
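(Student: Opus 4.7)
The strategy is to produce the principal eigenpair by applying the nonlinear Krein--Rutman theorem of \cite{A18} to a compact solution operator, and then to identify the resulting eigenvalue with $\lambda_D(\cI)$ in \cref{E-lamD} through a standard sliding--comparison argument. The operator $\cI$ in \cref{E-cI} is concave and positively $1$-homogeneous in $f$ as a pointwise infimum of affine operators; under \cref{A2.1} its local part is uniformly elliptic while the nonlocal integral $I[\,\cdot\,,x,\zeta]$ is a bounded zeroth-order perturbation. These are exactly the structural properties the nonlinear Krein--Rutman framework requires.

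Concretely, I would work in the Banach space
\begin{equation*}
X \,\df\, \{u \in \cC_b(\Rd) \colon u \equiv 0 \text{ in } D^c\}
\end{equation*}
equipped with the sup norm and its natural positive cone $X_+$. Choose a constant $M$ large enough that $c(x,\zeta) - M$ is uniformly negative on $D \times \Act$. For each $f \in X_+$, define $Tf$ to be the unique solution $v \in \Sobl^{2,p}(D) \cap X$, $p>d$, of the Dirichlet problem
\begin{equation*}
\cI v(x) - M v(x) \,=\, -f(x) \quad \text{in } D\,, \qquad v \,\equiv\, 0 \quad \text{in } D^c\,.
\end{equation*}
Solvability and uniqueness follow from a Perron construction combined with the Aleksandrov--Bakelman--Pucci estimate and the interior/boundary $\Sob^{2,p}$ theory for fully nonlinear elliptic equations; the local boundedness of $\nu(x,\zeta,\Rd)$ reduces the nonlocal integral to a genuine lower-order term. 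One then verifies that $T\colon X \to X$ is positively $1$-homogeneous, concave, order-preserving, strongly positive (by the nonlocal strong maximum principle, using the nondegeneracy in \cref{A2.1}\,(1)), and compact, via the embedding $\Sob^{2,p}(D) \hookrightarrow \cC^{1,\alpha}(\Bar D)$ up to the $\cC^{1,1}$ boundary. The nonlinear Krein--Rutman theorem then yields a unique pair $(\mu_*, \psi_D)$ with $\psi_D > 0$ in $D$ and $T\psi_D = \mu_* \psi_D$; rescaling gives $\cI \psi_D = \lambda_* \psi_D$ with $\lambda_* \df M - \mu_*^{-1}$, and the normalization $\psi_D(0) = 1$ fixes $\psi_D$.

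To identify $\lambda_*$ with $\lambda_D(\cI)$ and prove the simplicity/characterization statement simultaneously, I would use the standard sliding argument. Given any $u \in \cC_{b,+}(\Rd) \cap \Sobl^{2,p}(D)$ that is positive in $D$ with $\cI u \le \lambda u$, set
\begin{equation*}
t_* \,\df\, \sup\{t > 0 \,\colon\, t u \le \psi_D \text{ in } \Rd\}\,,
\end{equation*}
which is finite by comparison at an interior maximum of $u/\psi_D$ and positive by the Hopf boundary point lemma (which applies thanks to the $\cC^{1,1}$ regularity of $\partial D$ and the uniform ellipticity). The function $w \df \psi_D - t_* u$ is then nonnegative on $\Rd$, touches zero at some $x_0 \in \Bar D$, and by concavity and positive $1$-homogeneity of $\cI$ satisfies a linear inequality $\Lg w \le (\lambda_* - \lambda)\, t_* u$ in $D$, where $\Lg$ is the linearization of $\cI$ along a minimizing selector for $\psi_D$. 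If $\lambda < \lambda_*$, the strong maximum principle for $\Lg$ yields a contradiction; if $\lambda = \lambda_*$, it forces $w \equiv 0$, i.e.\ $u = t_*^{-1}\psi_D$. Since $\psi_D \in \Uppsi^+(\lambda_*)$ gives $\lambda_D(\cI) \le \lambda_*$, and the same sliding applied to any $\psi \in \Uppsi^+(\lambda)$ gives the reverse inequality, we obtain $\lambda_D(\cI) = \lambda_*$. The statement for the linear operator $\sA^c$ in \cref{EsA} is proved by an identical (and simpler) version of this argument.

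The main obstacle will be the Dirichlet theory and compactness of $T$: one must construct solutions to a nonlinear nonlocal equation with uniform $\Sob^{2,p}$ bounds up to the $\cC^{1,1}$ boundary, and verify a nonlocal strong maximum principle strong enough for the sliding step. The finiteness of the characteristic measure $\Pi$, which via \cref{A2.1}\,(2b) makes $\sup_\zeta \nu(x,\zeta,\Rd)$ locally bounded, is crucial: it turns the nonlocal term into a bounded zeroth-order perturbation of a local fully nonlinear operator and brings the problem within reach of classical Aleksandrov--Bakelman--Pucci and Calderón--Zygmund theory. This is also the reason, as the authors note, the approach does not extend to non-finite Lévy measures.
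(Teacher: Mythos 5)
Your overall architecture coincides with the paper's: a compact, order-preserving, positively $1$-homogeneous solution operator for the Dirichlet problem (the paper replaces $c$ by $c-\norm{c}_\infty$ and solves $\cI v=-f$ via the nonlocal ABP estimate, Winter's boundary $\Sob^{2,p}$ estimates and the Leray--Schauder fixed point theorem, rather than a Perron construction, but this is a technical variant of your step), followed by the nonlinear Krein--Rutman theorem of \cite{A18} and a comparison argument identifying the eigenvalue with $\lambda_D(\cI)$ and yielding simplicity.

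The genuine gap is in the comparison step. You set $t_*=\sup\{t>0\colon tu\le\psi_D\}$ and assert $t_*>0$ by Hopf's lemma, but $u$ is only assumed to belong to $\cC_{b,+}(\Rd)\cap\Sobl^{2,p}(D)$ and to be positive \emph{in} $D$; it need not vanish on $\partial D$ or in $D^c$. Since $\psi_D\equiv0$ on $D^c$ and $\psi_D(x)\to0$ as $x\to\partial D$, the inequality $tu\le\psi_D$ fails for every $t>0$ as soon as $u$ is bounded away from zero near a single boundary point, so $t_*=0$ and $w=\psi_D$ never vanishes inside $D$: the argument collapses. Hopf's lemma bounds $\psi_D$ from below near $\partial D$; it gives no upper bound on $u$ there. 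The comparison must be run in the opposite direction, sliding $t\psi_D$ \emph{underneath} $u$: the boundary is then harmless, but one must propagate the inequality $t\psi_D\le u$ from a large compact $K\subset D$ across the thin collar $D\setminus K$, where no pointwise comparison is available a priori. This is exactly what the paper's narrow-domain maximum principle (\cref{T6.2}, a consequence of the nonlocal ABP estimate in \cref{T6.1}) provides, and it is the ingredient missing from your proof. A secondary point: for the linearized inequality to close, one should linearize along a minimizing selector for $u$ rather than for $\psi_D$; concavity of $\cI$ then gives $\cA^c_v(\psi_D-tu)\ge\lambda_D(\cI)\,\psi_D-\lambda t u$ for that selector. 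Once $t\psi_D-u\le0$ in $D$ is secured, your strong-maximum-principle dichotomy and the identification of $\lambda_D(\cI)$ go through as you describe.
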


We refer to $\psi_D$ as the \emph{principal eigenfunction} of $\cI$ on $D$,
and to $(\lambda_D,\psi_D)$ as the \emph{principal eigenpair}.
If the operator is not specified,
$\lambda_D$ refers to the principal eigenvalue
of $\cI$ or $\sA^c$.

As a corollary to the proof of \cref{T2.1} we obtain the following monotonicity property
with respect to the domain.

\begin{corollary}\label{C2.1}
Suppose that $D\subsetneq D'$.
Then we have $\lambda_{D}<\lambda_{D'}$.
\end{corollary}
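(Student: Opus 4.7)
The plan is to deduce the corollary by applying the uniqueness clause of \cref{T2.1} with the eigenfunction on the larger domain used as a test function on the smaller one, and then rule out the equality alternative via the boundary behavior.

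First, I would argue the weak inequality by showing that $\psi_{D'}$ is admissible for the eigenvalue problem on $D$. Since $D \subsetneq D'$, we have $\psi_{D'} \in \cC_{b,+}(\Rd)$, $\psi_{D'} > 0$ on $D \subset D'$, and $\psi_{D'} \in \Sobl^{2,p}(D') \subset \Sobl^{2,p}(D)$. Moreover the eigenvalue identity $\cI \psi_{D'} = \lambda_{D'}\psi_{D'}$ holds throughout $D'$, and hence in particular in $D$, so $\cI\psi_{D'} - \lambda_{D'}\psi_{D'} = 0 \le 0$ in $D$. Note that the non-local term $I[\psi_{D'},x,\zeta]$ is well-defined for $x\in D$ because $\psi_{D'}$ is defined on all of $\Rd$ (set to $0$ off $D'$), so the shift of ambient domain from $D'$ to $D$ causes no issue.

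Next, I would invoke the uniqueness part of \cref{T2.1} applied to the domain $D$ with $u=\psi_{D'}$ and $\lambda=\lambda_{D'}$. It yields the dichotomy: either $\lambda_{D'} > \lambda_D$ (which is exactly the strict inequality we want), or $\lambda_{D'} = \lambda_D$ and $\psi_{D'} = \kappa\,\psi_D$ for some $\kappa>0$.

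Finally, I would rule out the second alternative by exploiting the Dirichlet boundary condition. Because $D\subsetneq D'$, the set $D'\setminus D$ is non-empty; pick any $y\in D'\setminus D$. Then $y\in D^c$, so $\psi_D(y)=0$, whereas $y\in D'$ gives $\psi_{D'}(y)>0$ by \cref{T2.1}. Thus $\psi_{D'}\neq \kappa\,\psi_D$ for any $\kappa>0$, and only the first alternative remains, giving $\lambda_D < \lambda_{D'}$. The only delicate point is ensuring that the uniqueness clause of \cref{T2.1} applies verbatim when the test function does not vanish outside $D$; but the statement of \cref{T2.1} only demands $u\in\cC_{b,+}(\Rd)$ (not a Dirichlet condition on $\partial D$), so this is automatic, and the argument is immediate.
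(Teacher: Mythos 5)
Your overall strategy is exactly the paper's: restrict $\psi_{D'}$ to $D$, observe that it is a positive (super)solution of $\cI u\le\lambda_{D'}u$ there with the nonlocal term unambiguously defined, invoke the dichotomy in \cref{T2.1}, and rule out the equality alternative. The admissibility step and the application of the dichotomy are fine.

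The last step, however, has a gap. The uniqueness assertion ``$u=\kappa\psi_D$'' in \cref{T2.1} can only be read as an identity \emph{in $D$}: the comparison argument in its proof (the functions $w_t=t\psi_D-u$, the narrow-domain and strong maximum principles) takes place entirely inside $D$, and indeed no Dirichlet condition is imposed on $u$ in $D^c$, so equality off $D$ cannot be part of the conclusion --- in the purely local case $\nu\equiv0$ one can modify $u$ arbitrarily away from $\overline D$ without affecting the hypotheses. You notice precisely this feature (``the statement only demands $u\in\cC_{b,+}(\Rd)$, not a Dirichlet condition'') but draw the wrong consequence from it: you then evaluate at a point $y\in D'\setminus D\subset D^c$, where the identity $\psi_{D'}=\kappa\psi_D$ is not asserted, so no contradiction is obtained. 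The repair is short: since $D\subsetneq D'$ and $D'$ is connected, $\partial D\cap D'\ne\varnothing$; at such a point $x_0$ one has $\psi_D(x_0)=0$ while $\psi_{D'}(x_0)>0$, and since both functions are continuous on $\Rd$, they differ on a nonempty set of the form $\sB_\delta(x_0)\cap D$, contradicting $\psi_{D'}=\kappa\psi_D$ in $D$. With this substitution your argument matches the paper's proof.
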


We next address the continuity properties with respect to the domain $D$.
Let $\{D_n\}_{n\in\NN}$ be a decreasing
sequence of smooth domains whose intersection is $D$, and which
satisfies an exterior sphere condition uniformly in $n\in\NN$,
that is, there exists $r>0$ such that for all large $n$,
every point of $\partial D_n$ can be touched from outside of $D_n$ with a ball
of radius $r$.

\begin{theorem}\label{T2.2}
Let $D_n\to D$ as above. Then $\lambda_{D_n}\to \lambda_D$ as $n\to\infty$.
\end{theorem}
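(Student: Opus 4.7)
The approach is to show that $\bar\lambda \df \lim_{n\to\infty}\lambda_{D_n}$ equals $\lambda_D$ by extracting a positive eigenfunction of $\cI$ on $D$ associated with $\bar\lambda$ as a limit of the (normalized) principal eigenfunctions $\psi_n \df \psi_{D_n}$, and then invoking the uniqueness clause of \cref{T2.1}. By \cref{C2.1}, the sequence $\{\lambda_{D_n}\}$ is strictly decreasing in $n$ and bounded below by $\lambda_D$, so $\bar\lambda$ exists with $\bar\lambda\ge\lambda_D$; it suffices to prove $\bar\lambda\le\lambda_D$.

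Normalize the eigenfunctions by $\psi_n(0)=1$ and extend each $\psi_n$ by $0$ outside $D_n$. Choosing a measurable selector $v_n\colon D_n\to\Act$ realizing the infimum defining $\cI\psi_n$, the eigen-equation $\cI\psi_n=\lambda_{D_n}\psi_n$ can be rewritten as the linear elliptic PDE
\begin{equation*}
\trace\bigl(a\grad^2\psi_n\bigr) + b_{v_n}\cdot\grad\psi_n + \bigl(c_{v_n}-\Bar\nu_n-\lambda_{D_n}\bigr)\psi_n
\,=\, -\int_{\Rd}\psi_n(x+z)\,\nu\bigl(x,v_n(x),\D{z}\bigr)\,,
\end{equation*}
where $\Bar\nu_n(x)\df\nu\bigl(x,v_n(x),\Rd\bigr)$. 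Because $\Uppi$ is finite, all zeroth-order coefficients are uniformly bounded, and the right-hand side is bounded in $\Lp^\infty$ by $\Bar\nu\,\norm{\psi_n}_\infty$. Combining Harnack chaining from $0$ for this linearized equation (which, after the rescaling $\psi_n/\norm{\psi_n}_\infty$, has uniformly bounded source) with a barrier construction from the uniform exterior sphere condition on $\{D_n\}$ to keep the maximum of $\psi_n$ at positive distance from $\partial D_n$ in $n$, we obtain a uniform bound $\norm{\psi_n}_\infty\le M$.

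Once the uniform sup bound is in place, standard $\Sob^{2,p}$ interior estimates, $p>d$, yield equi-boundedness of $\psi_n$ in $\Sob^{2,p}(K)$ for every compact $K\subset D$. Passing to a subsequence, $\psi_n\to\bar\psi$ weakly in $\Sobl^{2,p}(D)$, strongly in $\cC^{1,\alpha}_{\mathrm{loc}}(D)$, and pointwise a.e.\ on $\Rd$. For $x\notin\Bar D$ one has $x\notin D_n$ for all $n$ large, so $\psi_n(x)=0$ eventually and $\bar\psi\equiv0$ on $\Bar D^c$; the uniform boundary barriers push to the limit to force $\bar\psi=0$ on $\partial D$, placing $\bar\psi\in\cC_{b,+}(\Rd)\cap\Sobl^{2,p}(D)$ with $\bar\psi(0)=1$. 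The bound $M$ provides a dominating function to justify dominated convergence in the nonlocal integrals, and compactness of $\Act$ together with continuity of the data allow the limit to be taken inside the infimum. We conclude $\cI\bar\psi=\bar\lambda\bar\psi$ in $D$, and Harnack applied to the linearized equation yields $\bar\psi>0$ in $D$. Thus $(\bar\lambda,\bar\psi)$ satisfies the hypotheses of \cref{T2.1}, and its uniqueness clause forces $\bar\lambda=\lambda_D$.

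The principal obstacle is the uniform global sup bound on $\psi_n$: the nonlocal term $I[\psi_n,\cdot\,,v_n]$ couples values of $\psi_n$ throughout $D_n$, so a pure interior Harnack chain cannot by itself propagate the normalization $\psi_n(0)=1$ into an $\Lp^\infty$ bound if the maximum is allowed to approach $\partial D_n$. The uniform exterior sphere condition is indispensable here, since the resulting uniform boundary barriers force the maximum of $\psi_n$ to be attained at a definite distance from $\partial D_n$ and hence in a region reachable from $0$ by a Harnack chain with uniform constant; once $\norm{\psi_n}_\infty$ is controlled, the nonlocal source becomes a uniformly bounded zeroth-order perturbation and the remaining arguments reduce to standard elliptic theory.
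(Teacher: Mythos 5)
Your overall architecture coincides with the paper's: pass to a limit of the Dirichlet eigenfunctions and identify the limiting eigenvalue via the comparison/uniqueness part of \cref{T2.1}. The easy direction $\bar\lambda\ge\lambda_D$ via \cref{C2.1}, the use of the uniform exterior sphere condition as a boundary barrier, and the interior $\Sob^{2,p}$ compactness are all as in the paper. The problem is the step you yourself single out as the principal obstacle: the uniform bound $\norm{\psi_n}_\infty\le M$ under the normalization $\psi_n(0)=1$. Harnack chaining cannot deliver it. The generalized Harnack inequality for the linearized local equation has the form $\sup_B u\le C\bigl(\inf_B u+\norm{f}_{\Lp^d(B)}\bigr)$, and here the source is $f_n=-\int\psi_n(\cdot+z)\,\nu(\cdot,v_n,\D z)$, whose size is itself of order $\Bar\nu\,\norm{\psi_n}_\infty$. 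Chaining from the interior maximum point to $0$ therefore yields only
\begin{equation*}
\norm{\psi_n}_\infty \,\le\, C\bigl(\psi_n(0)+\kappa\,\norm{\psi_n}_\infty\bigr)\,,
\end{equation*}
which is vacuous unless $C\kappa<1$, and there is no smallness to exploit: shrinking the balls to reduce $\kappa$ lengthens the chain and inflates $C$ geometrically. Rescaling by $\norm{\psi_n}_\infty$ first, as you suggest parenthetically, runs into the same circularity: Harnack then gives $1\le C\bigl(\tilde\psi_n(0)+\kappa\bigr)$, which does not produce a positive lower bound on $\tilde\psi_n(0)$. This is exactly the failure of the Harnack inequality for these nonlocal operators that the authors emphasize (cf.\ \cite[Example~1.1]{ACPZ18}), and it is why the paper's argument is structured to avoid any such propagation of pointwise information.

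The fix — and the paper's route — is to normalize by $\norm{\psi_{D_n}}_\infty=1$ instead of by the value at $0$. Then the nonlocal source is uniformly bounded with no further input, \cref{L6.1} forces the maximum to stay a definite distance from $\partial D_n$, interior estimates give compactness, the limit $u$ is nontrivial because its supremum equals $1$ and is attained in $D$, and positivity of $u$ comes from the strong maximum principle applied to the \emph{limit} (the nonlocal term, moved to the right-hand side, is nonpositive for a nonnegative function), not from a quantitative Harnack bound on the approximants. Your claimed bound $\norm{\psi_n}_\infty\le M$ is in fact true, but the only available proof is a compactness--contradiction argument that passes through the sup-normalized sequence anyway, so your normalization buys nothing and costs a step you have not justified. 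A second, smaller imprecision: the ``Moreover'' clause of \cref{T2.1}, applied to $(\bar\lambda,\bar\psi)$, only yields $\bar\lambda\ge\lambda_D$ (which you already have); to conclude $\bar\lambda=\lambda_D$ one must rerun the sliding comparison from the \emph{proof} of \cref{T2.1} with the roles of the eigenpair and the supersolution swapped, i.e.\ slide $t\bar\psi$ beneath $\psi_D$, using $\cI\psi_D=\lambda_D\psi_D\le\bar\lambda\,\psi_D$.
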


In the following theorem,
we incorporate the dependence of $\lambda_D$ on $c$ explicitly in the notation,
by writing this as $\lambda_D(c)$.

\begin{theorem}\label{T2.3}
For any two potentials $c$ and $c'$ the following hold.
\begin{enumerate}
\item[\ttup{i}]
If $c\le c'$, and $c'>c$ on a subset of $D$ with positive Lebesgue measure,
then $\lambda_D(c)<\lambda_D(c')$.

\item[\ttup{ii}]
For the operator $\sA^c$, we have 
$\lambda_D\bigl(\theta c + (1-\theta) c'\bigr)
\,\le\,\theta \lambda_D(c) + (1-\theta) \lambda_D(c')$
for all $\theta\in[0,1]$.
\end{enumerate}
\end{theorem}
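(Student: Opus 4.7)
The argument splits naturally into the two items.

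For (i), I would argue by contradiction and invoke the uniqueness clause of \cref{T2.1}; the argument applies to both $\cI$ and $\sA^c$, and I present it for $\cI$. Denote temporarily by $\cI_c$ the operator in \cref{E-cI} built with potential $c$, so that $\cI_{c'}f=\cI_c f+(c'-c)f$ for any $f$. Let $\psi'$ be the principal eigenfunction of $\cI_{c'}$ furnished by \cref{T2.1}. Then $\cI_c\psi' = \lambda_D(c')\psi'-(c'-c)\psi'\le \lambda_D(c')\psi'$ in $D$. If $\lambda_D(c')\le\lambda_D(c)$, this exhibits $\psi'$ as a positive subsolution of $\cI_c$ at level $\lambda_D(c)$, and \cref{T2.1} forces $\psi'=\kappa\psi$, where $\psi$ is the principal eigenfunction of $\cI_c$. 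But then $\cI_c\psi'=\lambda_D(c)\psi'$, and comparing with the identity above gives $\bigl(\lambda_D(c)-\lambda_D(c')+(c'-c)\bigr)\psi'\equiv 0$ a.e.\ in $D$. Since $\psi'>0$ in $D$ and $c'\ge c$, this forces $c'=c$ a.e.\ in $D$, contradicting the hypothesis.

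For (ii), I would implement the classical log-convexity argument. Let $(\lambda_D(c),\psi)$ and $(\lambda_D(c'),\psi')$ be the principal eigenpairs of $\sA^c$ and $\sA^{c'}$ provided by \cref{T2.1}, and set $u\df\psi^\theta(\psi')^{1-\theta}$ and $c_\theta\df\theta c+(1-\theta)c'$. Then $u\in\cC_{b,+}(\Rd)\cap\Sobl^{2,p}(D)$ is strictly positive in $D$ and vanishes on $D^c$, since $\psi$ and $\psi'$ do. Writing $w\df\log u=\theta\log\psi+(1-\theta)\log\psi'$, the local quotient $\bigl(\trace(a\grad^2 u)+b\cdot\grad u\bigr)/u$ equals $\trace(a\grad^2 w)+\grad w\transp a\grad w+b\cdot\grad w$; linearity of the first and third summands in $w$ together with the convexity of the quadratic form $p\mapsto p\transp a p$ yield the pointwise bound $\bigl(\trace(a\grad^2 u)+b\cdot\grad u\bigr)/u\le \theta\bigl(\trace(a\grad^2\psi)+b\cdot\grad\psi\bigr)/\psi+(1-\theta)\bigl(\trace(a\grad^2\psi')+b\cdot\grad\psi'\bigr)/\psi'$. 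For the nonlocal piece, Young's inequality $a^\theta b^{1-\theta}\le\theta a+(1-\theta)b$ applied with $a=\psi(x+z)/\psi(x)$ and $b=\psi'(x+z)/\psi'(x)$ gives $u(x+z)/u(x)\le\theta\,\psi(x+z)/\psi(x)+(1-\theta)\,\psi'(x+z)/\psi'(x)$; subtracting $1$ and integrating against $\nu(x,\D{z})$ produces $I[u,x]/u\le\theta I[\psi,x]/\psi+(1-\theta)I[\psi',x]/\psi'$. Combining these bounds with $\sA\psi=(\lambda_D(c)-c)\psi$ and the analogue for $\psi'$ yields $\sA^{c_\theta}u\le\bigl(\theta\lambda_D(c)+(1-\theta)\lambda_D(c')\bigr)u$ in $D$, so $u\in\Uppsi^+\bigl(\theta\lambda_D(c)+(1-\theta)\lambda_D(c')\bigr)$ for $\sA^{c_\theta}$, and \cref{E-lamD} delivers the claimed convexity.

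The main delicate step is the nonlocal convexity bound. It depends essentially on the ratio structure $u(x+z)/u(x)$ and on the linearity of the operator; the minimum-over-$\zeta$ in the definition \cref{E-cI} of $\cI$ does not commute with the geometric-mean substitution, which is exactly why (ii) is stated only for $\sA^c$. A secondary point is the $\Sobl^{2,p}(D)$ regularity of $u$, but strict positivity of $\psi,\psi'$ on compact subsets of $D$ (by \cref{T2.1}) makes $u$ a smooth composition of the eigenfunctions, so the regularity is inherited.
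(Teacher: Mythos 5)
Your proposal is correct and follows essentially the same route as the paper: part (i) by contradiction via the uniqueness clause of \cref{T2.1}, and part (ii) by testing the geometric mean $\psi^\theta(\psi')^{1-\theta}$, using convexity of $p\mapsto p\transp a p$ for the local terms and the weighted AM--GM inequality on the ratios $\psi(x+z)/\psi(x)$ for the nonlocal term. Your closing remark about why the infimum over $\zeta$ restricts part (ii) to the linear operator $\sA^c$ is also the right explanation.
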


\section{A controlled eigenvalue problem}\label{S-control}

In this section we characterize the maximal exit rate probability 
\begin{equation}\label{E-exit}
\Theta_D\,\df\, \sup_{Z\in\Uadm}\, \limsup_{T\to\infty}\,\frac{1}{T}\,
\log \Prob^Z_x(\uptau(D)>T)
\end{equation}
for the jump diffusion model in \cref{E-sde}
on a bounded $\cC^{1,1}$ domain $D$.

This topic has a long history in the context of continuous diffusions
in the uncontrolled
\cite{DV76,Gong-88,Pinsky-85} and controlled \cite{Fleming-85,BB10,BA15} settings,
and is linked to the general theory of quasi-stationary distributions
\cite{Champagnat-16}.

We assume \hyperlink{A1}{(A1)}--\hyperlink{A3}{(A3)}.
Let $c=0$, and denote
the corresponding operator $\cI$ in \cref{E-cI} as $\widetilde\cI$, that is,
\begin{equation*}
\widetilde\cI f(x) \,=\,
\trace\bigl(a(x)\grad^2 f(x)\bigr)  
+ \inf_{\zeta\in\Act}\bigl\{ I[f,x, \zeta] + b(x,\zeta)\cdot\grad f(x)\bigr\}\,.
\end{equation*}
For a given $u\in\cC_0(D)$ we can define $v=\widetilde\cT u$ to be the solution of
\begin{equation*}
\widetilde\cI (-v) \,=\, u \quad \text{in\ } D\,, \quad \text{and}\quad v=0
\text{\ \ in\ } D^c\,.
\end{equation*}
Then we can apply the tools of \cref{S-bounded} on $\widetilde\cT$
(see also, \cref{S-proofs}) to obtain the following.

\begin{theorem}\label{T3.1}
The following hold.
\begin{itemize}
\item[(a)]
There exists a unique $\Tilde\psi_D\in \cC_{b}(\Rd)\cap\Sobl^{2,p}(D)$, $p>d$,
and $\Tilde\lambda_D<0$ satisfying
\begin{equation}\label{ET3.1A}
\begin{aligned}
\widetilde\cI \Tilde\psi_D &\,=\, \Tilde\lambda_D\, \Tilde\psi_D \text{\ \ in\ } D\,,\\
\Tilde\psi_D &\,=\, 0\quad \text{in\ } D^c\,,\\
\Tilde\psi_D &\,<\, 0 \quad \text{in\ } D\,, \quad \Tilde\psi_D(0)=-1\,.
\end{aligned}
\end{equation}

\item[(b)]
If $D_n\to D$ in the sense of \cref{T2.2}, then
$\lim_{n\to\infty} \Tilde\lambda_{D_n}=\Tilde\lambda_D$.
\end{itemize}
\end{theorem}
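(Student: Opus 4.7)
The plan is to recast the problem so that the results of \cref{S-bounded} apply directly. The key observation is the identity
\begin{equation*}
\widetilde\cI(-v)(x) \,=\, -\widehat\cI v(x)\,,\quad\text{where}\quad
\widehat\cI f(x) \,\df\, \trace\bigl(a(x)\grad^2 f(x)\bigr)
+ \sup_{\zeta\in\Act}\,\bigl\{I[f,x,\zeta] + b(x,\zeta)\cdot\grad f(x)\bigr\}\,,
\end{equation*}
which follows from the linearity of $\trace(a\grad^2\cdot)$, $I[\cdot,x,\zeta]$, and $b(x,\zeta)\cdot\grad$ in $f$, combined with the elementary identity $\inf_{\zeta}(-F(\cdot,\zeta))=-\sup_{\zeta}F(\cdot,\zeta)$. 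Setting $v\df-\Tilde\psi_D$, the system \cref{ET3.1A} is therefore equivalent to the positive Dirichlet principal eigenvalue problem
\begin{equation*}
\widehat\cI v \,=\, \Tilde\lambda_D\,v \text{ in } D\,,\qquad v=0 \text{ in } D^c\,,\qquad v > 0 \text{ in } D\,,\qquad v(0)=1\,.
\end{equation*}

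The first step is to note that the proofs of \cref{T2.1,T2.2} carry over verbatim from $\cI$ to $\widehat\cI$: the comparison principles and the nonlinear Krein--Rutman framework of \cite{A18} on which they are built are insensitive to whether the Hamiltonian is formed with $\inf_{\zeta}$ or $\sup_{\zeta}$, since each argument depends on $\zeta$ only through the measurable, locally bounded data in \cref{A2.1}, and $c=0$ eliminates any zeroth-order contribution. Applying the analogue of \cref{T2.1} to $\widehat\cI$ then produces a unique eigenpair $(v,\Tilde\lambda_D)$ with $v\in\cC_{b,+}(\Rd)\cap\Sobl^{2,p}(D)$, $p>d$, and reverting via $\Tilde\psi_D=-v$ yields every claim of part~(a) except the sign of the eigenvalue.

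The strict inequality $\Tilde\lambda_D<0$ I would obtain by comparison with the zero function. If $\Tilde\lambda_D\ge 0$, then $\widehat\cI v=\Tilde\lambda_D v\ge 0=\widehat\cI 0$ in $D$, while $v=0$ on $\partial D$; the comparison principle for $\widehat\cI$ (a uniformly elliptic, positively $1$-homogeneous operator with no zeroth-order term) would then force $v\le 0$ in $D$, contradicting $v>0$ in $D$. Hence $\Tilde\lambda_D<0$.

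Part~(b) is then a direct application of the analogue of \cref{T2.2} to $\widehat\cI$ along the decreasing sequence $\{D_n\}$. The main obstacle throughout is essentially bookkeeping: carefully verifying that each step in \cref{S-bounded}, written for the $\inf$-type operator $\cI$, transfers without change to the $\sup$-type operator $\widehat\cI$, and tracking signs so that $v$ and $\Tilde\psi_D=-v$ correspond respectively to the principal eigenpair of $\widehat\cI$ and to the stated solution of \cref{ET3.1A}.
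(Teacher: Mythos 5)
Your proposal is correct and matches the paper's approach: the paper likewise defines the map $v=\widetilde\cT u$ through $\widetilde\cI(-v)=u$ and invokes the machinery of \cref{S-bounded} (Krein--Rutman, ABP, and the comparison arguments of \cref{S-proofs}) for the resulting sup-type operator. The only cosmetic difference is that the paper obtains $\Tilde\lambda_D<0$ directly from the Krein--Rutman construction (the differential eigenvalue is $-\nicefrac{1}{\mu}$ for the positive eigenvalue $\mu$ of the compact resolvent map), whereas you derive it by an ABP comparison with zero; both are valid.
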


The main result in this section is \cref{T3.2} which asserts that
$\Theta_D=\Tilde\lambda_D$.
As before, $\Uadm$ denotes the set of admissible controls and the dynamics are
given by \cref{E-sde}.
We need the following version of It\^{o}'s formula which plays
a crucial role in this study.

\begin{lemma}\label{L3.1}
Let $\uptau$ be the first exit time from a bounded domain $D$, and $c$ be bounded in $D$.
Then for any $u\in\Sob^{2,p}(D)\cap\cC_b(\Rd)$, $p>d$, we have
\begin{equation}\label{EL3.1A}
\Exp_x\Bigl[\E^{\int_0^{\uptau\wedge t} c(X_s,Z_s)\, \D{s}}\,
u(X_{\uptau\wedge t})\bigr]-u(x)
\,=\, \Exp_x
\left[\int_0^{\uptau\wedge t} \E^{\int_0^s c(X_r,Z_r)\, \D{r}}\,
\cA^c u(X_s,Z_s)\, \D{s}\right],
\quad t\ge 0\,,
\end{equation}
where $\cA^c$ is as in \cref{EcA}.
\end{lemma}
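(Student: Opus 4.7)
The plan is to first prove \cref{EL3.1A} for $u\in\cC^2(\Rd)\cap\cC_b(\Rd)$ via the standard It\^o formula for jump diffusions, and then pass to the $\Sob^{2,p}$ regularity by mollification together with a Krylov--type occupation-time estimate that handles the $\Lp^p$ second-derivative term.

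For smooth $u$, set $C_t\df\int_0^t c(X_r,Z_r)\,\D{r}$ and $Y_t\df\E^{C_t}u(X_t)$. It\^o's formula applied to $u(X_t)$ for the jump diffusion in \cref{E1.1} decomposes $u(X_t)-u(x)$ into a local martingale (comprising $\int\grad u\cdot\upsigma\,\D W$ and the compensated Poisson integral of $u(X_{s-}+g)-u(X_{s-})$) plus an absolutely continuous drift
\begin{equation*}
\trace\bigl(a\grad^2 u\bigr)(X_s) + b_\circ(X_s,Z_s)\cdot\grad u(X_s)
+ \int_{\RR^m\setminus\{0\}}\bigl[u(X_s+g)-u(X_s)-g\cdot\grad u(X_s)\bigr]\,\Uppi(\D\xi)\,.
\end{equation*}
Using the definitions $b=b_\circ-\int g\,\Uppi(\D\xi)$ and $I[u,x,\zeta]=\int[u(x+g)-u(x)]\,\Uppi(\D\xi)$, this drift coincides with $\cA u(X_s,Z_s)$, and the product rule gives $\D Y_t=(\mathrm{martingale})+\E^{C_t}\cA^c u(X_t,Z_t)\,\D{t}$. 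At the stopping time $\uptau\wedge t$, the jump integrand $u(X_{s-}+g)-u(X_{s-})$ is bounded by $2\norm{u}_\infty$, the Brownian integrand is bounded on $\Bar D$, and $c$ is bounded on $D$, so the stopped local martingale is a true martingale, and taking expectations yields \cref{EL3.1A}.

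For general $u\in\Sob^{2,p}(D)\cap\cC_b(\Rd)$ with $p>d$, I would choose smooth approximants $u_n\in\cC^2(\Rd)\cap\cC_b(\Rd)$ with $u_n\to u$ in $\Sob^{2,p}(D)$ and uniformly on $\Bar D$ (via the Sobolev embedding $\Sob^{2,p}(D)\hookrightarrow\cC^{1,\alpha}(\Bar D)$), and $\sup_n\norm{u_n}_\infty<\infty$. Applying the smooth-case formula to each $u_n$, the left-hand side of \cref{EL3.1A} converges by bounded convergence. On the right-hand side, the terms $b\cdot\grad(u_n-u)$, $c(u_n-u)$, and $I[u_n-u,\cdot,\cdot]$ tend to zero by uniform convergence on $\Bar D$ together with uniform boundedness of $\{u_n-u\}$ and local boundedness of $\nu(\cdot,\cdot,\Rd)$. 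The only delicate term is $\trace\bigl(a\grad^2(u_n-u)\bigr)$, which is handled by the Krylov-type bound
\begin{equation*}
\Exp_x\biggl[\int_0^{\uptau\wedge t}\abs{f(X_s)}\,\D{s}\biggr]\,\le\,C\,\norm{f}_{\Lp^p(D)}\qquad\forall\,f\in\Lp^p(D)\,,
\end{equation*}
valid for $p>d$ thanks to uniform ellipticity \hyperlink{A3}{(A3)} and to the fact that the bounded drift and the finite-intensity jump term act as lower-order perturbations of the continuous generator. The main obstacle is precisely the justification of this occupation-time estimate in the presence of the compensated Poisson integral; once it is secured, the approximation closes routinely, and the estimate can alternatively be extracted from existing generalized It\^o formulas for $\Sob^{2,p}$ functions of jump-diffusions in the literature.
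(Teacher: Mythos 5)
Your overall strategy coincides with the paper's: prove \cref{EL3.1A} for smooth $u$ by the standard jump-diffusion It\^o formula, then approximate a general $u\in\Sob^{2,p}(D)\cap\cC_b(\Rd)$ by smooth functions and pass to the limit, the only delicate term being the occupation integral of $\trace\bigl(a\grad^2(u_n-u)\bigr)$. The genuine gap is that you never prove the Krylov-type estimate
\begin{equation*}
\Babs{\Exp_x\Bigl[\int_0^{\uptau\wedge t} f(X_s)\,\D{s}\Bigr]}\,\le\,\kappa\,\norm{f}_{\Lp^p(D)}\,,
\end{equation*}
which is the entire analytic content of the lemma beyond the routine approximation. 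You explicitly flag it as ``the main obstacle'' and then appeal to uniform ellipticity, to the jump term being a ``lower-order perturbation,'' and to unnamed generalized It\^o formulas in the literature. That is not an argument: the classical Krylov estimate is established for continuous diffusions, and the whole point here is to justify it in the presence of the compensated Poisson integral.

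The paper closes this gap with a short self-contained device you should reproduce. By density it suffices to take $f$ nonnegative and Lipschitz. Let $w\in\cC^{2,\alpha}(D)$ solve the Dirichlet problem $\trace(a\grad^2 w)=f$ in $D$, $w=0$ on $\partial D$ (extended by zero outside $D$). The maximum principle gives $\sup_D\abs{w}\le\kappa\norm{f}_{\Lp^d(D)}$, the Sobolev estimate then gives $\norm{w}_{\Sob^{2,p}(D)}\le\kappa_1\norm{f}_{\Lp^p(D)}$, and embedding yields $\norm{w}_{\cC^{1,\alpha}(D)}+\norm{w}_{\cC_b(\Rd)}\le\kappa_2\norm{f}_{\Lp^p(D)}$. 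Since $w$ is classical, the already-established smooth It\^o formula applies to $w$ (with $c=0$) and expresses $\Exp_x\bigl[\int_0^{\uptau\wedge t}f(X_s)\,\D{s}\bigr]$ as $\Exp_x[w(X_{\uptau\wedge t})]-w(x)$ minus the occupation integral of $I[w,X_s,Z_s]+b(X_s,Z_s)\cdot\grad w(X_s)$; every term is controlled by $\norm{w}_{\cC^{1,\alpha}(D)}+\norm{w}_{\cC_b(\Rd)}$, hence by $\norm{f}_{\Lp^p(D)}$ --- the nonlocal term because $\Uppi$ is finite and $w$ is globally bounded. Trading the occupation integral of $f$ for boundary values and first-order/nonlocal occupation integrals of the solution of a purely second-order Dirichlet problem is exactly what renders the jump term harmless, and it is the step your proposal is missing.
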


\begin{proof}
We follow the technique of Krylov \cite{Krylov}.
Consider a sequence of bounded, smooth functions
$u_m$ such that
\begin{equation*}
\norm{u_m-u}_{\Sob^{2,p}(D)}\to 0\,,
\quad \norm{u_m-u}_{\cC_b(\Rd)}\to 0\,,
\quad \text{as\ } m\to \infty\,.
\end{equation*}
By It\^{o}'s formula we then have 
\begin{equation}\label{EL3.1B}
\Exp_x \Bigl[\E^{\int_0^{\uptau\wedge t} c(X_s,Z_s)\, \D{s}}\,
u_m(X_{\uptau\wedge t})\Bigr]
-u_m(x)
\,=\, \Exp_x\left[\int_0^{\uptau\wedge t} \E^{\int_0^s c(X_r,Z_r)\,\D{r}}\,
\cA^c u_m(X_s,Z_s)\,\D{s}\right], \quad t\ge 0\,.
\end{equation}
By the compactness of
the embedding $\Sob^{2,p}(D)\hookrightarrow \cC^{1,\alpha}(D)$, it is easily seen that
 as $m\to\infty$, the following holds.
\begin{multline*}
\Exp_x\biggl[\int_0^{\uptau\wedge t} \Bigl(\babs{I[u_m, X_s, Z_s]-I[u, X_s, Z_s]}
+ \babs{u_m(X_s)-u(X_s)}\\ + \babs{\grad u_m(X_s)-\grad u(X_s)}\Bigr)\,\D{s}
\biggr] \,\xrightarrow[m\to\infty]{}\, 0\,.
\end{multline*}
Thus in order to pass to the limit in \cref{EL3.1B} to obtain \cref{EL3.1A},
we only need to verify the passage to the limit for the term
\begin{equation*}
\Exp_x\left[\int_0^{\uptau\wedge t} \trace (a\grad^2 u_m)(X_s)\,\D{s}\right].
\end{equation*}
To verify this limit it is enough to show that
\begin{equation}\label{EL3.1C}
\left|\Exp_x\left[\int_0^{\uptau\wedge t} f(X_s)\,\D{s}\right]\right|
\,\le\, \kappa \norm{f}_{L^p(D)}
\end{equation}
for some constant $\kappa$ not depending on $f$.
It is also enough if we prove this for functions $f$
that are nonnegative and Lipschitz in $D$.
Let $w\in\cC^{2,\alpha}(D)$ be the unique solution to
\begin{equation}\label{EL3.1D}
\trace (a\grad^2 w) \,=\, f\quad \text{in\ } D\,, \quad w=0\text{\ \ on\ } \partial D\,.
\end{equation}
Applying the maximum principle \cite[Theorem 9.1]{GilTru} we have
$\sup_{D}\abs{w}\le \kappa \norm{f}_{L^d(D)}$.
By the Sobolev estimate \cite[Theorem~9.14]{GilTru} we then have
\begin{equation}\label{EL3.1E}
\norm{w}_{\Sob^{2,p}(D)} \,\le\, \kappa_1 \norm{f}_{L^p(D)}
\end{equation}
for some constant $\kappa_1$.
Thus, by Sobolev embedding and \cref{EL3.1E}, we obtain
\begin{equation}\label{EL3.1F}
\norm{w}_{\cC^{1,\alpha}(D)} + \norm{w}_{\cC_b(\Rd)} \,\le\,
\kappa_2 \norm{w}_{\Sob^{2,p}(D)} \,\le\, \kappa_2 \kappa_1 \norm{f}_{L^p(D)}
\end{equation}
for some $\alpha\in(0,1)$.
Therefore, applying It\^{o}'s formula to \cref{EL3.1D}, we deduce that
\begin{align*}
\Exp_x\left[\int_0^{\uptau\wedge t} f(X_s)\,\D{s}\right] &\,=\,
\Exp_x\left[\int_0^{\uptau\wedge t} \trace (a\grad^2 w) (X_s)\,\D{s}\right]\\
&\,=\, \Exp_x \bigl[w(X_{\uptau\wedge t})\bigr]- w(x)
- \Exp_x\left[\int_0^{\uptau\wedge t} I[w,X_s, Z_s] + b(X_s,Z_s)\cdot \grad w(X_s)\,\D{s}
\right]\\
&\,\le\, \kappa_3 \norm{f}_{L^p(D)}
\end{align*}
 for some constant $\kappa_3$ by \cref{EL3.1F}.
This establishes \cref{EL3.1C}, and completes the proof.
\end{proof}

\begin{remark}
More generally, \cref{EL3.1A} is valid for any
$u\in\Sobl^{2,d}(D)$ satisfying $I[u,\cdot\,]\in\Sobl^{2,d}(D)$,
provided $g$ is independent of $\zeta$.
This can be shown by following the argument in
\cite[Lemma~4.1]{ACPZ18}.
\end{remark}

Recall the definition in \cref{E-exit}.
The main result of this section is the following.

\begin{theorem}\label{T3.2}
It holds that $\Theta_D=\Tilde\lambda_D$.
In addition, a stationary Markov control is optimal for the problem in
\cref{E-exit} if and only if it is an a.e.\ measurable selector of \cref{ET3.1A}.
\end{theorem}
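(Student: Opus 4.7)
The plan is to establish $\Theta_D\le\widetilde\lambda_D$ and the reverse inequality separately, and then identify optimal stationary Markov controls through the uniqueness assertion of \cref{T2.1} applied to the linear operator obtained by freezing the control.

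\emph{Upper bound.} A direct application of \cref{L3.1} to $-\widetilde\psi_D$ on $D$ is insufficient, because $\widetilde\psi_D$ vanishes on $\partial D$, leaving no useful lower bound on the event $\{\uptau>t\}$. I would circumvent this by invoking the outer-approximation continuity of \cref{T3.1}\ttup{b}: fix a sequence of smooth domains $D_n\searrow D$ with $\bar D\subset\subset D_n$ and set $u_n\df -\widetilde\psi_{D_n}$, which is nonnegative on $\RR^d$ and bounded below by some $\epsilon_n>0$ on the compact set $\bar D$. The eigenvalue equation on $D_n$ yields the pointwise bound $\cA u_n(x,\zeta)\le\widetilde\lambda_{D_n}u_n(x)$ for $(x,\zeta)\in D_n\times\Act$. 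Applying \cref{L3.1} on $D$ with the constant potential $-\widetilde\lambda_{D_n}$, and using $u_n\ge0$ globally to discard the contribution at $\uptau\le t$ (which is generally positive because $X_\uptau$ may jump into $D_n\setminus D$), yields
\begin{equation*}
\mathrm{e}^{-\widetilde\lambda_{D_n}t}\,\Exp_x^Z\bigl[u_n(X_t)\Ind_{\{\uptau>t\}}\bigr]\,\le\, u_n(x)\qquad \forall\,Z\in\Uadm\,.
\end{equation*}
Replacing $u_n(X_t)$ by its lower bound $\epsilon_n$, passing $t\to\infty$, and then $n\to\infty$ using $\widetilde\lambda_{D_n}\to\widetilde\lambda_D$, gives $\Theta_D\le\widetilde\lambda_D$.

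\emph{Lower bound.} Compactness of $\Act$ and continuity of $(x,\zeta)\mapsto I[\widetilde\psi_D,x,\zeta]+b(x,\zeta)\cdot\grad\widetilde\psi_D(x)$ yield a Borel selector $v^*\in\Usm$ attaining the infimum in \cref{ET3.1A}. Setting $u\df-\widetilde\psi_D$, the equality $\cA u\bigl(x,v^*(x)\bigr)=\widetilde\lambda_D u(x)$ holds a.e.\ in $D$; since $u=0$ in $D^c$ (so $u(X_\uptau)=0$ on $\{\uptau\le t\}$), \cref{L3.1} now produces the identity
\begin{equation*}
\mathrm{e}^{-\widetilde\lambda_D t}\,\Exp_x^{v^*}\bigl[u(X_t)\Ind_{\{\uptau>t\}}\bigr]\,=\,u(x)\,.
\end{equation*}
Bounding $u(X_t)\le\norm{u}_\infty$ and noting $u(x)>0$ on $D$ yields $\Prob_x^{v^*}(\uptau>t)\ge u(x)\norm{u}_\infty^{-1}\mathrm{e}^{\widetilde\lambda_D t}$, so $\Theta_D\ge\widetilde\lambda_D$.

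\emph{Characterization of optimality and main obstacle.} For $v\in\Usm$ let $\cA_v$ denote the linear operator with drift $b(\cdot,v(\cdot))$ and kernel $\nu(\cdot,v(\cdot),\cdot)$. By \cref{A2.1}\ttup{2a}, \cref{T2.1,T2.2} applied to $\cA_v$ yield a Dirichlet principal eigenpair $\bigl(\lambda_D(\cA_v),\psi_v\bigr)$ on $D$ together with its outer-approximation continuity, and the arguments of the previous two paragraphs, specialized to $\cA_v$, show that the exit rate under $v$ equals $\lambda_D(\cA_v)$. Since $\cA_v u\le\widetilde\lambda_D u$ in $D$ with $u=-\widetilde\psi_D>0$ in $D$ and $u=0$ in $D^c$, the uniqueness half of \cref{T2.1} applied to $\cA_v$ gives $\lambda_D(\cA_v)\le\widetilde\lambda_D$, with equality iff $u$ is a positive multiple of $\psi_v$, equivalently iff $\cA_v u=\widetilde\lambda_D u$ a.e.\ in $D$, i.e., iff $v$ is an a.e.\ measurable selector of \cref{ET3.1A}. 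Combining, $v$ is optimal iff $\lambda_D(\cA_v)=\widetilde\lambda_D$ iff $v$ is an a.e.\ measurable selector of \cref{ET3.1A}. The principal technical hurdle is the upper bound: the boundary vanishing of $\widetilde\psi_D$ forces the interposition of the outer approximation $\{D_n\}$, and one must carefully exploit the global nonnegativity of $u_n$ to handle the nonlocal jump-boundary term $u_n(X_\uptau)\Ind_{\{\uptau\le t\}}$ on $\{X_\uptau\in D_n\setminus D\}$, a phenomenon absent for continuous diffusions.
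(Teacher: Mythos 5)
Your proposal is correct and follows essentially the same route as the paper: outer approximating domains $D_n\Supset D$ together with \cref{T3.1}(b) for the bound $\Theta_D\le\Tilde\lambda_D$, the measurable selector and It\^o's formula (\cref{L3.1}) for the reverse bound, and the uniqueness assertion of \cref{T2.1} applied to the frozen linear operator $\cA_v$ for the characterization of optimal controls. The only cosmetic difference is in the lower bound, where you apply \cref{L3.1} directly on $D$ (which requires the global $\Sob^{2,p}(D)$ regularity of the eigenfunction, noted in the paper before \cref{L4.1}), whereas the paper works on inner domains $D_k\Subset D$ and passes to the limit using $\sup_{D_k^c}\psi_D\to0$.
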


\begin{proof}
We first show sufficiency.
Let $v$ be a measurable selector of \cref{ET3.1A}, that is,
\begin{equation*}
\cA_v\Tilde\psi_D(x)\,\df\,\trace(a \grad^2 \Tilde\psi_D)(x)
+ I_v[\Tilde\psi_D, x]+  b_v(x)\cdot\grad \Tilde\psi_D(x)
\,=\, \Tilde\lambda_D \Tilde\psi_D\quad \text{a.e.\ in\ } D\,.
\end{equation*}
With $\psi_D=-\Tilde\psi_D$, we get
\begin{equation}\label{ET3.2A}
\cA_v\psi_D(x) 
\,=\, \Tilde\lambda_D \psi_D(x)\quad \text{a.e.\ in\ } D\,.
\end{equation}
Now consider a collection of smooth, increasing domains $D_k\Subset D$
such that $\cup_k D_k=D$. Let $\uptau_k$ be the first exit time from $D_k$.
Note that $\uptau_k\le \uptau$ for all $k$.
Since $\psi_D\in\Sob^{2,p}(D_k)$ we can apply \cref{L3.1} to \cref{ET3.2A}
to obtain
\begin{align*}
\psi_D(x) &\,=\, \Exp^v_x\left[\E^{-\Tilde\lambda_D t\wedge\uptau_k}
\psi_D(X_{t\wedge\uptau_k})\right]
\\
&\,\le\, \E^{-\Tilde\lambda_D t}\norm{\psi_D}_\infty \Prob_x^v(t<\uptau_k)
+ \biggl(\sup_{x\in D^c_k}\,\psi_D\biggr)\,\E^{-\Tilde\lambda_D t}
\Prob^v_x(\uptau_k\le t) \\
&\,\le\, \E^{-\Tilde\lambda_D t}\norm{\psi_D}_\infty \Prob_x^v(t<\uptau)
+ \biggl(\sup_{x\in D^c_k}\,\psi_D\biggl)\,
\E^{-\Tilde\lambda_D t}\Prob^v_x(\uptau_k\le t)
\\
&\,\xrightarrow[k\to\infty]{}\,
\E^{-\Tilde\lambda_D t}\norm{\psi_D}_\infty \Prob_x^v(t<\uptau)\,.
\end{align*}
We take logarithms on both sides, divide by $t$, and let $t\to\infty$ to obtain
\begin{equation}\label{ET3.2B}
\Tilde\lambda_D \,\le\, \limsup_{t\to\infty}\, \frac{1}{t}\,\log \Prob^v_x(\uptau>t)
\,\le\, \Theta_D\,.
\end{equation}
Next, consider a domain $D_n\Supset D$.
Let $(\Tilde\psi_n, \Tilde\lambda_n)=(\Tilde\psi_{D_n}, \Tilde\lambda_{D_n})$
denote the corresponding eigenpair given in \cref{T3.1}.
Then, with $\psi_n\df-\Tilde\psi_n$, we have
\begin{equation*}
\trace(a \grad^2 \psi_n)(x) + \sup_{\zeta\in\Act}\,\left\{ I[\psi_n, x, \zeta]
+  b(x,\zeta)\cdot\grad \psi_n(x)\right\} \,=\,
 \Tilde\lambda_n \psi_n(x)\,.
 \end{equation*}
Applying \cref{L3.1} in the domain $D$, and using the fact $\psi_n>0$ in $D_n$,
we see that for any $Z\in\Uadm$ we have
\begin{align*}
\psi_n(x) &\,\ge\, \Exp^Z_x\left[\E^{-\Tilde\lambda_n t\wedge\uptau}\,
\psi_n(X_{t\wedge\uptau})\right]\\
& \,\ge\, \E^{-\Tilde\lambda_n t}\, \biggl(\min_{D}\,\psi_n\biggr)
\, \Prob^Z_x(\uptau>t)\,,
\end{align*}
from which we obtain
\begin{equation*}
\Tilde\lambda_n \,\ge\, \limsup_{t\to\infty}\, \frac{1}{t}\,\log \Prob^Z_x(\uptau>t)\,.
\end{equation*}
Since $Z$ is arbitrary, we have $\Tilde\lambda_n\ge \Theta_D$,
and thus letting $n\to\infty$
and applying \cref{T3.1}\,(b), we obtain $\Tilde\lambda_D\ge \Theta_D$.
Combining this with \cref{ET3.2B} we have
$\Tilde\lambda_D=\Theta_D$. Since $\Tilde\lambda_D$ is the
principal eigenvalue corresponding to any minimizing selectors,
we have thus shown sufficiency.

We next prove necessity.
Let $v$ be a optimal stationary Markov control, that is, it satisfies
\begin{equation*}
\Theta_D \,=\, \limsup_{T\to\infty}\, \frac{1}{T}\,\log \Prob^v_x(\uptau>T)\,.
\end{equation*}
Let $(\theta_v, u)$ be a solution of 
\begin{equation*}
\cA_v u
\,=\, \theta_v u \quad \text{a.e.\ in\ } D\,,
\end{equation*}
with $u>0$ in $D$, and $u=0$ on $D^c$.
Using the above arguments  we obtain 
\begin{equation*}
\theta_v \,=\, \limsup_{T\to\infty}\, \frac{1}{T}\,
\log \Prob^v_x(\uptau>T) \,=\, \Theta_D\,.
\end{equation*}
By \cref{ET3.1A} we have
\begin{equation*}
\cA_v\psi_D(x)
\,\le\, \Tilde\lambda_D \psi_D(x)
\quad \text{a.e.\ in\ } D\,,
\end{equation*}
and $\psi_D=-\Tilde\psi_D>0$ in $D$.
Since $\theta_v=\Tilde\lambda_D$,  it follows from \cref{T2.1} that
$\psi_D=\kappa u$ for some $\kappa>0$.
Therefore, $v$ is a minimizing selector.
This completes the proof.
\end{proof}

\section{Risk-sensitive control}\label{S-risk}

In this section, we study the risk-sensitive control problem in
\cref{D1.1} for the controlled diffusion in
\cref{E-sde}, and characterize optimality via the
risk-sensitive HJB equation in \cref{E-HJB}.
Hypotheses \hyperlink{A1}{(A1)}--\hyperlink{A3}{(A3)}
are in full effect in this section, without further mention.
In \cref{S4.1} we study the eigenvalue problem for a linear operator,
and use these results in \cref{S4.2} which is devoted to the proof of
\cref{T1.1}.

\subsection{The eigenvalue problem in \texorpdfstring{$\Rd$}{}}\label{S4.1}

Recall that $\Usm$ denotes the set of stationary Markov controls.
As in the proof of \cref{T3.2}, by $\cA_v$, with $v\in\Usm$, we denote the linear operator
\begin{equation*}
\cA_v f (x)\,\df\, \trace (a\grad^2 f)(x) + I_v[f,x] + b_v(x)\cdot \grad f(x),
\quad f\in \cC^2(\Rd)\cap \cC_b(\Rd)\,,
\end{equation*}
where we use the notation in \cref{Esimpl}.
In analogy to the notation in \cref{EsA}, we define
$\cA_v^c \df \cA_v + c_v$.
We also use the notation
\begin{equation}\label{Enot4.1}
\sE_x^v(c) \,\df\, \limsup_{T\to\infty}\, \frac{1}{T}\,
\log \Exp_x^v\left[\E^{\int_0^T c_v(X_s)\,\D{s}}\right],
\quad\text{and\ } \sE^v(c)\,\df\, \inf_{x\in\Rd}\,\sE_x^v(c)\,,\quad v\in\Usm\,.
\end{equation}
In the first part of this section we characterize $\sE^v(c)$ as a principal
eigenvalue of the  operator $\lambda_\Rd(\cA_v^c)$ in $\Rd$ (see \cref{E-lamD}).

We keep in mind that, by \cref{A1.1},
the operator $\cA_v$ satisfies
$\cA_v\Lyap \le \widehat{C} \Ind_{\cK}-\gamma \Lyap$ if $c$ is bounded,
and $\cA_v\Lyap \le \widehat{C} \Ind_{\cK}-\ell \Lyap$ otherwise.
Also, for any fixed $v\in\Usm$, the operator $\cA_v^c$ satisfies
the hypotheses for $\sA^c$ in
\cref{A2.1}, and thus the results of \cref{S-bounded} apply.
The main theorem in this section is the following.

\begin{theorem}\label{T4.1}
Grant \cref{A1.1}.
The following hold for each $v\in\Usm$.
\begin{itemize}
\item[(a)]
There exists a positive $\Psi_v\in \Sobl^{2,p}(\Rd)\cap\order(\Lyap)$, $p>d$,
and $\lambda^v\in\RR$ satisfying
\begin{equation}\label{ET4.1A}
\cA_v\Psi_v + c_v\, \Psi_v \,=\, \lambda^v \Psi_v\quad \text{in\ } \Rd\,,
\quad\text{and\ } \Psi_v(0)=1\,.
\end{equation}
\item[(b)]
$\lambda^v=\lambda_\Rd(\cA_v^c)=\sE^v_x(c)$ for all $x\in\Rd$.
\item[(c)]
The function $\Psi_v$ is the unique solution of \cref{ET4.1A} 
in $\Sobl^{2,d}(\Rd)\cap\order(\Lyap)$ with $\lambda^v=\lambda_\Rd(\cA_v^c)$.
\end{itemize}
\end{theorem}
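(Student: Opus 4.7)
The plan is to construct $\Psi_v$ as the pointwise limit of the Dirichlet principal eigenfunctions $\psi_n$ of $\cA_v^c$ on an exhausting family of smooth bounded domains $D_n\uparrow\Rd$, and to identify the resulting PDE eigenvalue with the dynamic growth rate $\sE_x^v(c)$ via Feynman--Kac.  Since $v\in\Usm$ is fixed, $\cA_v^c$ is linear and satisfies \cref{A2.1} under \hyperlink{A1}{\ttup{A1}}--\hyperlink{A3}{\ttup{A3}} together with \cref{EA1.1C}; hence \cref{T2.1} produces an eigenpair $(\lambda_n,\psi_n)$ with $\psi_n(0)=1$, and \cref{C2.1} gives the monotonicity $\lambda_n\le\lambda_{n+1}$.

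The central preparatory estimate is a two-sided Lyapunov bound: there exist constants $C_0,M$ independent of $n$ such that $\lambda_n\le C_0$ and $\psi_n\le M\Lyap$ on $D_n$.  Both flow from \cref{A1.1}: it delivers $\cA_v^c\Lyap\le C_0\Lyap$ globally (absorbing $\|c\|_\infty$ into the factor $\gamma$ in the bounded-$c$ case, and using the inf-compactness of $\ell-c$ in the unbounded case).  The Harnack inequality controls $\sup_{\cK}\psi_n$ uniformly, the comparison principle embedded in \cref{T2.1} then gives $\psi_n\le M\Lyap$, and evaluating the Feynman--Kac representation of $\psi_n$ at $x=0$ caps $\lambda_n$ by $C_0$.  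Because $\Pi$ is \emph{finite}, the nonlocal term is a zeroth-order perturbation of a uniformly elliptic local operator, so a generalized Harnack inequality gives $\sup_{\sB_R}\psi_n\le C(R)$, $\Sob^{2,p}$ estimates ($p>d$) yield relative compactness locally, and a diagonal extraction produces $\Psi_v\in\Sobl^{2,p}(\Rd)\cap\order(\Lyap)$ satisfying $\cA_v^c\Psi_v=\lambda^v\Psi_v$ with $\Psi_v(0)=1$ and $\lambda^v\df\lim\lambda_n$.  Passage to the limit inside the nonlocal integral is justified by dominated convergence against the Lyapunov bound together with \cref{EA1.1C}, and $\Psi_v>0$ follows from the strong maximum principle of the local part.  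This proves (a).

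For (b), apply \cref{L3.1} to $\Psi_v$ on $\sB_R$ to obtain
$\Psi_v(x)=\Exp^v_x\bigl[\E^{\int_0^{T\wedge\uptau_R}(c_v-\lambda^v)(X_s)\,\D s}\,\Psi_v(X_{T\wedge\uptau_R})\bigr]$,
and let $R\to\infty$ using $\Psi_v\in\order(\Lyap)$ together with the Feynman--Kac bound on $\Exp_x^v[\E^{\int_0^T c_v(X_s)\,\D s}\Lyap(X_T)]$ to extend the representation to $\uptau_R$ replaced by $+\infty$ on $[0,T]$.  Taking logarithms, dividing by $T$, and letting $T\to\infty$ gives $\lambda^v=\sE_x^v(c)$ independently of $x$, where the matching lower bound uses the geometric ergodicity implicit in \cref{A1.1} to keep $\Psi_v(X_T)$ bounded below on a set of positive probability.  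The identification $\lambda^v=\lambda_\Rd(\cA_v^c)$ then follows from $\lambda_{D_n}\uparrow\lambda^v$: the monotonicity of \cref{C2.1} combined with the definition \cref{E-lamD} gives $\lambda^v\le\lambda_\Rd(\cA_v^c)$, while any $\psi\in\Uppsi^+(\lambda)$ at a level $\lambda<\lambda^v$ would dominate a multiple of $\psi_n$ on $D_n$ by the comparison part of \cref{T2.1}, forcing $\lambda\ge\lambda_n$ for all $n$ and hence $\lambda\ge\lambda^v$, a contradiction.

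Part (c) is obtained from the stochastic representation: any second positive $\widetilde\Psi_v\in\Sobl^{2,d}(\Rd)\cap\order(\Lyap)$ solving $\cA_v^c\widetilde\Psi_v=\lambda^v\widetilde\Psi_v$ satisfies the same Feynman--Kac formula, so the ratio $\widetilde\Psi_v/\Psi_v$ is a bounded positive quantity preserved along the multiplicative functional $\E^{\int_0^t(c_v-\lambda^v)(X_s)\,\D s}$; the ergodic properties supplied by \cref{A1.1} combined with the strong maximum principle force this ratio to be constant, and normalizing at $x=0$ identifies $\widetilde\Psi_v\equiv\Psi_v$.  The main obstacle throughout is precisely the identification $\lambda^v=\sE_x^v(c)$: as the paper emphasizes, Doob's $h$-transform is not readily constructible for the jump-diffusion model, so the stochastic representation of the principal eigenfunction must be exploited directly, and the passage inside the nonlocal integrals---in both the PDE limit and in the Feynman--Kac formula---depends crucially on the finiteness of $\Pi$ and on the integrability condition \cref{EA1.1C}.
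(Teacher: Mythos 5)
Your overall strategy (Dirichlet eigenpairs on expanding domains, a Lyapunov barrier, Feynman--Kac identification of the eigenvalue) is the paper's strategy, but two steps that you treat as routine are exactly where the paper has to work hardest, and as written they do not go through. First, the local uniform bound on $\psi^v_n$. You write that ``the Harnack inequality controls $\sup_{\cK}\psi_n$ uniformly'' and that a ``generalized Harnack inequality gives $\sup_{\sB_R}\psi_n\le C(R)$.'' Harnack's inequality fails in general for this class of nonlocal operators (the paper cites \cite[Example~1.1]{ACPZ18}), and the generalized Harnack estimate that \emph{is} available treats $\sJ_n(x)=\int\psi^v_n(x+z)\,\nu(x,v,\D z)$ as a source term; to control $\norm{\sJ_n}_{L^d}$ you already need a \emph{global} bound $\psi^v_n\le\Lyap$, which is what you are trying to prove. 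Your argument is circular. The paper breaks the circle by rescaling $\psi^v_n$ so that it touches $\Lyap$ from below and showing, via the strong maximum principle applied to the local part, that the touching point must lie in a fixed ball; but that maximum-principle step needs $(c_v-\lambda^v_n)\le\ell$ (or $\gamma$) outside a fixed compact set, i.e.\ a uniform \emph{lower} bound on $\lambda^v_n$. That lower bound is \cref{L4.2}, the ``novel method'' the introduction advertises, and it is entirely absent from your proposal: you only bound $\lambda^v_n$ from above.

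Second, the inequality $\lambda^v\ge\sE^v_x(c)$ in part (b). Your justification --- that $\Psi_v(X_T)$ is ``bounded below on a set of positive probability'' --- does not yield the conclusion, because $\Psi_v$ is in general not bounded away from zero at infinity, and the exponential functional $\E^{\int_0^T c_v(X_s)\,\D s}$ may put its mass precisely on paths for which $X_T$ is far out and $\Psi_v(X_T)$ is negligible. The paper's \cref{L4.5} circumvents this by perturbing the cost to $\Breve c_n\ge\Breve\lambda_n$ outside $\sB_n$, which forces the corresponding eigenfunction to attain its infimum on $\sB_n$ (hence to be bounded below globally), runs Feynman--Kac for the perturbed problem, and then identifies $\Breve\lambda=\lambda^v$ through the uniqueness statement \cref{T4.2}. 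A related, smaller gap appears in your part (c): you assert that any second solution ``satisfies the same Feynman--Kac formula,'' but It\^o plus Fatou only gives the one-sided inequality $u(x)\ge\Exp^v_x\bigl[\E^{\int_0^{\uuptau}(c_v-\lambda^v)\,\D s}\,u(X_{\uuptau})\bigr]$; the exact representation is a nontrivial property that the paper establishes only for the constructed $\Psi_v$ (via \cref{L4.1,L4.3}), and the uniqueness proof deliberately combines the exact identity for $\Psi_v$ with the mere inequality for $u$ before invoking the strong maximum principle.
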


Note that $I_v[\Psi_v, x]$ is well defined since $\Psi_v\in\order(\Lyap)$
by \cref{A1.1}.

The rest of this section is devoted to the proof of \cref{T4.1}.
The key steps involved in the proof are as follows:
(i) we start with the Dirichlet eigenvalue problems on a sequence of balls
increasing to $\Rd$ and then justify the passage of limit in the equations;
(ii) we  show that the limits of the principal eigenvalues on balls coincide
with $\sE^v_x(c)$.
We break down the proof in several lemmas. 
One of the key lemmas is \cref{L4.1} where we obtain
a stochastic representation for the Dirichlet principal eigenfunctions
of $\cA^c_v$.
This representation (see \cref{EL4.1A}) is the main ingredient in obtaining uniform
local  bounds for the eigenfunctions when we consider a sequence of domains
increasing to $\Rd$. This we do in \cref{L4.3} where a stochastic representation
is obtained for the principal eigenfunction of $\cA^c_v$ in $\Rd$.
\cref{L4.2} establishes a lower bound on the limits of the principal eigenvalues
of $\cI$ over a sequence of increasing domains,
which is required
to prove the stochastic representation in \cref{L4.3}.
In \cref{L4.5} we show that the principal eigenvalue of $\cA^c_v$ on $\Rd$ coincides
with the risk sensitive value $\sE^v_x$ with respect to $c_v$.

In the interest of economy of notation,
we adopt the following definitions.

\begin{definition}
For $v\in\Usm$, we let
$$\lambda^v_n\,\df\, \lambda_{\sB_n}\bigl(\cA^c_v\bigr)\,,\qquad
\text{and\ \ } \lambda^v\,\df\, \lim_{n\to\infty}\, \lambda^v_n\,.$$
Also,
$$\lambda_n\,\df\, \lambda_{\sB_n}(\cI)\,,\qquad
\text{and\ \ } \lamstr\,\df\, \lim_{n\to\infty}\, \lambda_n\,.$$
Here, as defined in \cref{Snot},
$\sB_n$ denotes the open ball of radius $b$ centered at $0$,
and $\lambda_{\sB_n}$ is as in \cref{E-lamD}.
\end{definition}

We begin with the following stochastic representation formula, inspired from
\cite{AB18, ABS19}, for the Dirichlet eigenfunction in the bounded domain.
Recall from \cref{T2.1} that the Dirichlet eigenfunctions
in $D$ belong to $\cC_0(D)\cap\Sob^{2,p}(D)$.
This can also be seen by \cite[Theorem 9.15]{GilTru}.

\begin{lemma}\label{L4.1}
For $v\in\Usm$, 
let $(\psi^v_n,\lambda^v_n)\in\cC_0(\sB_n)\cap\Sob^{2,p}(\sB_n)\times\RR$, $p>d$,
be the Dirichlet principal eigenpair satisfying
\begin{align*}
\cA_v\psi^v_n + c_v \psi^v_n&\,=\, \lambda^v_n\, \psi^v_n \quad \text{in\ } \sB_n\,,\\
\psi^v_n &\,=\,0\quad \text{in\ } \sB_n^c\,,\\
\psi^v_n &\,>\, 0 \quad \text{in\ } \sB_n\,.
\end{align*}
Then for every $r\in (0, n)$ we have
\begin{equation}\label{EL4.1A}
\psi^v_n(x) \,=\, \Exp^v_x\left[ \E^{\int_0^{\uuptau_r} (c_v(X_s)-\lambda^v_n)\,\D{s}}\,
\psi^v_n(X_{\uuptau_r})\Ind_{\{\uuptau_r<\uptau_n\}}\right]
\quad \forall\,x\in \sB_n\setminus \overline{\sB}_r\,,
\end{equation}
where $\uuptau_r=\uptau(\sB^c_r)$ as defined in \cref{Snot}.
In addition, for all $n\in\NN$, we have $\lambda^v_n\le \sE^v_x(c)$ for $x\in\sB_n$.
\end{lemma}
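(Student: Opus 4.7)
The plan is to apply the It\^{o}--Krylov identity from \cref{L3.1} to $\psi^v_n$ on two different bounded domains: the annular region $A_{r,n} \df \sB_n \setminus \overline{\sB}_r$ to obtain the stochastic representation \cref{EL4.1A}, and the ball $\sB_n$ itself to deduce $\lambda^v_n \le \sE^v_x(c)$.

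For the stochastic representation, the starting point is that $\psi^v_n\in\Sob^{2,p}(\sB_n)\cap\cC_b(\Rd)$ (extended by zero outside $\sB_n$) satisfies $\cA_v\psi^v_n + (c_v-\lambda^v_n)\psi^v_n = 0$ a.e.\ in $\sB_n$, and hence in $A_{r,n}$. Since $A_{r,n}$ is a bounded smooth domain and $\psi^v_n\in\Sob^{2,p}(A_{r,n})$, I would apply \cref{L3.1} with potential $c_v-\lambda^v_n$ and stopping time $\uptau_{A_{r,n}}\df\uuptau_r\wedge\uptau_n$ to obtain, for each $t\ge 0$ and $x\in A_{r,n}$,
\begin{equation*}
\psi^v_n(x) \,=\, \Exp^v_x\Bigl[\E^{\int_0^{t\wedge\uptau_{A_{r,n}}}(c_v(X_s)-\lambda^v_n)\,\D s}\,\psi^v_n(X_{t\wedge\uptau_{A_{r,n}}})\Bigr],
\end{equation*}
since the integrand on the right-hand side of \cref{EL3.1A} vanishes identically on the annulus by the eigenvalue equation. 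Passing to the limit $t\to\infty$, on the event $\{\uptau_n\le\uuptau_r\}$ we have $X_{\uptau_n}\in\sB_n^c$ (the inclusion $\overline{\sB}_r\subset\sB_n$ precludes the process landing in $\overline{\sB}_r$ at a jump that exits $\sB_n$), so $\psi^v_n(X_{\uptau_n})=0$; the surviving term, restricted to $\{\uuptau_r<\uptau_n\}$, yields precisely \cref{EL4.1A}, provided the passage to the limit is justified.

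For the inequality $\lambda^v_n\le\sE^v_x(c)$, I would carry out the same calculation on the ball $\sB_n$ with stopping time $T\wedge\uptau_n$, and use that $\psi^v_n$ vanishes on $\sB_n^c$ to get, for all $T>0$,
\begin{equation*}
\psi^v_n(x) \,=\, \Exp^v_x\Bigl[\E^{\int_0^{T}(c_v(X_s)-\lambda^v_n)\,\D s}\,\psi^v_n(X_T)\,\Ind_{\{T<\uptau_n\}}\Bigr] \,\le\, \norm{\psi^v_n}_\infty\,\E^{-\lambda^v_n T}\,\Exp^v_x\Bigl[\E^{\int_0^T c_v(X_s)\,\D s}\Bigr].
\end{equation*}
Taking logarithms, dividing by $T$, and sending $T\to\infty$ recovers $\lambda^v_n\le\sE^v_x(c)$ using the positivity of $\psi^v_n(x)$ for $x\in\sB_n$ together with the definition in \cref{Enot4.1}.

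The main technical hurdle will be justifying the $t\to\infty$ limit in the first step, since the Feynman--Kac weight can grow exponentially in $t$ while only the a.s.\ finiteness of $\uptau_{A_{r,n}}$ (from the nondegeneracy hypothesis \hyperlink{A3}{\ttup{A3}}) is immediate. The plan is to produce a dominating bound of the form $\norm{\psi^v_n}_\infty\,\E^{(\norm{c_v}_\infty+\abs{\lambda^v_n})\uptau_{A_{r,n}}}$ and verify its integrability by exhibiting an exponential moment for $\uptau_{A_{r,n}}$, either via Khasminskii's lemma or by constructing a PDE barrier on a bounded enlargement of $A_{r,n}$. Should this direct route fail, a fallback is to view the stopped Feynman--Kac process as a nonnegative local martingale, apply optional sampling to obtain one direction of \cref{EL4.1A}, and establish the reverse inequality using the Dirichlet principal eigenvalue characterization from \cref{T2.1} together with the boundary condition $\psi^v_n=0$ on $\sB_n^c$.
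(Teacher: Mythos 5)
Your setup is correct: applying \cref{L3.1} on the annulus gives the pre-limit identity, Fatou's lemma then yields the inequality ``$\ge$'' in \cref{EL4.1A}, and your argument for $\lambda^v_n\le\sE^v_x(c)$ on the ball $\sB_n$ is exactly the paper's (standard) route. The gap is in the passage $t\to\infty$ for the reverse inequality ``$\le$'', i.e.\ in showing that the non-exit term
$\Exp^v_x\bigl[\E^{\int_0^{t}(c_v(X_s)-\lambda^v_n)\,\D{s}}\,\psi^v_n(X_{t})\Ind_{\{t<\uuptau_r\wedge\uptau_n\}}\bigr]$
vanishes. Your proposed dominating bound $\norm{\psi^v_n}_\infty\,\E^{(\norm{c_v}_\infty+\abs{\lambda^v_n})\uptau_{A_{r,n}}}$ cannot be integrated in general: Khasminskii's lemma (or a PDE barrier) yields $\Exp_x[\E^{\theta\uptau_{A}}]<\infty$ only for $\theta$ below the negated principal Dirichlet eigenvalue of $\cA_v$ on the annulus, a threshold that depends on the fixed domain $A_{r,n}$ and has no reason to exceed $\norm{c_v}_\infty+\abs{\lambda^v_n}$. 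The sharp fact that \emph{is} true --- and is exactly what the proof needs --- is that the principal eigenvalue of $\cA_v^{c}$ on the annulus (equivalently, on $\sB_n$ with the perturbed potential $\Hat{c}_v=c_v-\Ind_{\sB_r}$) is \emph{strictly} below $\lambda^v_n$; this is \cref{C2.1}/\cref{T2.3}\,(i), and it is the idea missing from your proposal.

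Concretely, the paper sets $\Hat{c}_v\df c_v-\Ind_{\sB_r}$, so that $\Hat\lambda^v_n<\lambda^v_n$ by \cref{T2.3}\,(i), and then uses the domain-continuity \cref{T2.2} to pick $R>n$ with $\Hat\lambda^v_R<\lambda^v_n$ still. The eigenfunction $\Hat\psi^v_R$ is strictly positive on $\overline\sB_n\subset\sB_R$, hence bounded below there, so $\psi^v_n\le \bigl(\max_{\sB_n}\psi^v_n\bigr)\bigl(\min_{\sB_n}\Hat\psi^v_R\bigr)^{-1}\Hat\psi^v_R$ on $\sB_n$; the supermartingale property of the Feynman--Kac functional built from $(\Hat\psi^v_R,\Hat\lambda^v_R)$ then bounds the non-exit term by a constant times $\E^{(\Hat\lambda^v_R-\lambda^v_n)t}\to 0$. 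Your ``fallback'' (optional sampling for one direction, then uniqueness of the Dirichlet problem on the annulus for the reverse) runs into the same obstruction: uniqueness for $\cA_v+(c_v-\lambda^v_n)$ on $A_{r,n}$ is not free, since $c_v-\lambda^v_n$ need not be nonpositive, and again requires knowing $\lambda_{A_{r,n}}(\cA_v^{c})<\lambda^v_n$. Without invoking the strict monotonicity and domain-continuity of the principal eigenvalue, the argument does not close.
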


\begin{proof}
Applying \cref{L3.1} and using the fact $\psi^v_n=0$ in $\sB_n^c$, it follows that
\begin{equation}\label{EL4.1B}
\psi^v_n(x) \,=\, \Exp_x\left[ \E^{\int_0^{t\wedge \uuptau_r} (c_v(X_s)-\lambda^v_n)\,\D{s}}\,
\psi^v_n(X_{t\wedge \uuptau_r})\Ind_{\{\uuptau_r\wedge t <\uptau_n\}}\right],
\quad t\ge 0\,.
\end{equation}
Letting $t\to\infty$ in \cref{EL4.1B},
and applying Fatou's lemma, we obtain
\begin{equation}\label{EL4.1C}
\psi^v_n(x) \,\ge\, \Exp_x\left[ \E^{\int_0^{\uuptau_r} (c_v(X_s)-\lambda^v_n)\,\D{s}}\,
\psi^v_n(X_{\uuptau_r})\Ind_{\{\uuptau_r <\uptau_n\}}\right].
\end{equation}
Define
\begin{equation*}
\Hat{c}_v(x) \,\df\, - \Ind_{\sB_r}(x) + c_v(x) \,,
\end{equation*}
and let $(\Hat\psi^v_n, \Hat\lambda^v_n)$ be the principal Dirichlet
eigenpair of the operator $\cA_v^{\Hat{c}}\df \cA_v+ \Hat{c}_v$ in $\sB_n$.
Then from \cref{T2.3} it follows that
$\lambda^v_n>\Hat\lambda^v_n$ and therefore, by \cref{T2.2}, we can find a ball
$\sB_R$ with $R>n$ such that the principal Dirichlet eigenpair
$(\Hat\psi^v_R, \Hat\lambda^v_R)$ of $\cA_v^{\Hat{c}}$ in $\sB_R$ satisfies 
$\lambda^v_n>\Hat\lambda^v_R$.
Then, we have
\begin{equation}\label{EL4.1D}
\begin{aligned}
\Exp^v_x\Bigl[ \E^{\int_0^{t} (c_v(X_s)-\lambda^v_n)\,\D{s}}\,
&\psi^v_n(X_{t})\Ind_{\{t <\uuptau_r\wedge \uptau_n\}}\Bigr]\\
&\,\le\, \Exp^v_x\left[\E^{\int_0^{t} (\Hat{c}_v(X_s)-\lambda^v_n)\,\D{s}}\,
\psi^v_n(X_{t})\Ind_{\{t <\uuptau_r\wedge \uptau_n\}}\right]\\
&\,\le\,  \frac{\max_{\sB_n}\psi^v_n}{\min_{\sB_n}\Hat\psi^v_R}\,
\Exp^v_x\left[\E^{\int_0^{t} (\Hat{c}_v(X_s)-\lambda^v_n)\,\D{s}}\,
\Hat\psi^v_R(X_{t})\Ind_{\{t <\uuptau_r\wedge \uptau_n\}}\right]
\\
&\,\le\,  \frac{\max_{\sB_n}\psi^v_n}{\min_{\sB_n}\Hat\psi^v_R}
\E^{(\Hat\lambda^v_R-\lambda^v_n)t}\,\Hat\psi^v_R(x) \,\xrightarrow[t\to\infty]{}\, 0\,,
\end{aligned}
\end{equation}
where in the last inequality we use
\begin{equation*}
\Hat\psi^v_R(x) \,\ge\, \Exp^v_x\left[ \E^{\int_0^t
(\Hat{c}_v(X_s)-\Hat\lambda^v_R)\,\D{s}}\,
\Hat\psi^v_R(X_t)\Ind_{\{t<\uuptau_r \wedge\uptau_n\}}\right].
\end{equation*}
Therefore, decomposing the integral in
\cref{EL4.1B}, and using the monotone convergence theorem and \cref{EL4.1D}, we obtain
\begin{equation*}
\psi^v_n(x) \,\le\, \Exp^v_x\left[ \E^{\int_0^{\uuptau_r} (c_v(X_s)-\lambda^v_n)\,\D{s}}\,
\psi^v_n(X_{\uuptau_r})\Ind_{\{\uuptau_r <\uptau_n\}}\right].
\end{equation*}
This together with \cref{EL4.1C} proves \cref{EL4.1A}.
The second assertion is quite standard,
and follows from the It\^{o} formula (see \cref{L3.1}).
This completes the proof.
\end{proof}

For a diffusion as in \cref{E-sde} without the L\'evy driving term,
provided that the sequence of eigenvalues $\{\lambda_n\}$ is bounded,
Harnack's inequality enables us to
construct a principal eigenfunction on the whole space.
A standard argument then shows that the limit $\lim_{n\to\infty}\lambda_n$
cannot be a negative number.
For the model at hand, this venue does not seem possible.
However, in order
to use the function $\Lyap$ in \cref{EA1.1A} as a barrier for the sequence
of Dirichlet solutions, as done in \cref{L4.3} later in this section, we need
a lower bound of this limit.
This is provided in following lemma
which uses a weaker hypothesis than \cref{A1.1}.

\begin{lemma}\label{L4.2}
Suppose there exist a positive function $\widetilde\Lyap\in\cC^2(\Rd)$, a constant
$\widetilde{C}$, and a compact set $\widetilde\cK$, which satisfy
\begin{equation}\label{EL4.2A}
\cA\widetilde\Lyap(x,\zeta) \,\le\,\widetilde{C}\Ind_{\widetilde\cK}(x) -1\qquad
\forall\, (x,\zeta)\in\Rd\times\Act\,,
\end{equation}
with $\cA$ as in \cref{EcA}.
Then $\lim_{n\to\infty} \lambda_{\sB_n}(\cI)\ge 0$.
\end{lemma}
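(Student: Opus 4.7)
The plan is to argue by contradiction: a strictly negative limit eigenvalue would force an exponential decay of an integrated moment of $\psi_n$, but the Lyapunov drift condition combined with the affine-growth assumption \hyperlink{A2}{\ttup{A2}} forces the exit time $\uptau_n$ to grow with $n$, yielding the contradiction. First I would reduce to the case $c\equiv 0$: the standing assumption $c\geq 0$ in this section, together with an $\epsilon$-selection argument on the infimum defining $\cI$, shows that any positive supersolution of $\cI\psi\leq\lambda\psi$ also satisfies $\widetilde\cI_0\psi\leq\lambda\psi$, where $\widetilde\cI_0$ is obtained from $\cI$ by deleting the $c$-term. Hence $\lambda_{\sB_n}(\widetilde\cI_0)\leq\lambda_{\sB_n}(\cI)$, and it suffices to prove the claim for $\widetilde\cI_0$.

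Assume $c\equiv 0$ and, for contradiction, that $\lim_n\lambda_n<0$; fix $\delta>0$ with $\lambda_n\leq -2\delta$ for all large $n$. Let $(\psi_n,\lambda_n)$ be the Dirichlet principal eigenpair from \cref{T2.1}, normalized so $\psi_n(0)=1$, and let $v_n\in\Usm$ be a measurable selector satisfying $\cA_{v_n}\psi_n=\lambda_n\psi_n$ a.e.\ in $\sB_n$. Applying \cref{L3.1} with constant running cost $-\lambda_n$, using $\psi_n\equiv 0$ on $\sB_n^c$, yields the identity
\[
\Exp^{v_n}_0\bigl[\psi_n(X_t)\Ind_{\{t<\uptau_n\}}\bigr] \,=\, \E^{\lambda_n t}\,,\qquad t\geq 0\,.
\]
A uniform lower bound $\inf_{\widetilde\cK}\psi_n\geq\eta>0$ (assuming WLOG $0\in\widetilde\cK$), supplied by a generalized Harnack-type inequality that exploits the finiteness of $\Pi$ to treat the nonlocal integral $I_{v_n}[\psi_n,\cdot]$ as a bounded zeroth-order perturbation of the local elliptic operator, then gives, upon integrating in $t$,
\[
\eta\,\Exp^{v_n}_0\Bigl[\int_0^{\uptau_n}\Ind_{\widetilde\cK}(X_s)\,\D s\Bigr] \,\leq\, \int_0^\infty \E^{\lambda_n t}\,\D t \,\leq\, \frac{1}{2\delta}\,.
\]
On the other hand, It\^{o}'s formula applied to $\widetilde\Lyap$ under $v_n$, combined with \cref{EL4.2A}, gives $\widetilde C\,\Exp^{v_n}_0[\int_0^{\uptau_n}\Ind_{\widetilde\cK}(X_s)\,\D s]\geq\Exp^{v_n}_0[\uptau_n]-\widetilde\Lyap(0)$, whence $\Exp^{v_n}_0[\uptau_n]\leq \widetilde\Lyap(0)+\widetilde C/(2\delta\eta)$ uniformly in large $n$. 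This contradicts the fact that $\Exp^{v_n}_0[\uptau_n]\to\infty$ as $n\to\infty$, which follows from \hyperlink{A2}{\ttup{A2}}: the affine-growth bound gives that $\E^{-C't}(1+|X_t|^2)$ is a supermartingale under every admissible control (for some $C'$ depending only on $C_0$), so Doob's inequality yields $\Prob^{v_n}_0(\uptau_n>\tfrac{\log n}{2C'})\geq 1-n^{-3/2}$, hence $\Exp^{v_n}_0[\uptau_n]\gtrsim \log n$.

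The main technical obstacle is the uniform-in-$n$ lower bound $\inf_{\widetilde\cK}\psi_n\geq\eta$. Classical Harnack's inequality fails for nonlocal operators of the type $\cA_{v_n}$ (see \cite[Example~1.1]{ACPZ18}); the required generalized estimate exploits the finiteness of the L\'evy measure $\Pi$ to absorb $I_{v_n}[\psi_n,\cdot]$ into a bounded zeroth-order term and then invokes Krylov--Safonov-type estimates on the resulting local PDE. This structural feature of the model—that $\Pi$ is finite—was emphasized in the introduction as the key distinction from stable-like operator settings and is essential here.
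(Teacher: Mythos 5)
Your overall architecture (contradiction via a uniform bound on $\Exp^{v_n}_0[\uptau_n]$ played against its divergence) is coherent, and several pieces are sound: the reduction to $c\equiv 0$, the identity $\Exp^{v_n}_0[\psi_n(X_t)\Ind_{\{t<\uptau_n\}}]=\E^{\lambda_n t}$, the occupation-time bound obtained from \cref{EL4.2A} via It\^o's formula, and the growth $\Exp^{v_n}_0[\uptau_n]\gtrsim\log n$ from \hyperlink{A2}{(A2)}. The proof collapses, however, at the step you yourself flag as the main obstacle: the uniform lower bound $\inf_{\widetilde\cK}\psi_n\ge\eta>0$ under the normalization $\psi_n(0)=1$. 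Writing $\cA_{v_n}\psi_n=\Lg_{v_n}\psi_n+\sJ_n$ with $\sJ_n(x)=\int\psi_n(x+z)\,\nu\bigl(x,v_n(x),\D{z}\bigr)$, the only part of the nonlocal term that finiteness of $\Pi$ lets you absorb into a bounded zeroth-order coefficient is $-\Bar\nu\,\psi_n(x)$; the remaining source $\sJ_n$ is a nonlocal average of $\psi_n$ over a kernel that may be singular and far-reaching, and every Harnack or local-boundedness estimate for the local operator $\Lg_{v_n}$ takes $\norm{\sJ_n}_{L^d}$ as input. Controlling $\sJ_n$ requires an a priori global bound on $\psi_n$, which is exactly what the Harnack step is supposed to deliver; this circularity is the content of \cite[Example~1.1]{ACPZ18} and is precisely why the paper says this venue ``does not seem possible'' for the present model. (The generalized Harnack estimate is indeed used, but only in \cref{L4.3}, \emph{after} $\psi^v_n$ has been rescaled to lie below the Lyapunov function $\Lyap$, which makes $\sJ_n$ locally bounded; that rescaling argument in turn needs the conclusion of \cref{L4.2} to locate the touching point, so it cannot be invoked here.)

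The paper's proof avoids any two-sided eigenfunction bound altogether. It normalizes $\min_{\overline\sB_1}\psi_n=1$, so the lower bound on $\sB_1$ holds by fiat, and then contradicts this normalization: by the stochastic representation of \cref{L4.1}, if $\lamstr<0$ then $\psi_n(x)\ge\inf_{v}\Exp^v_x\bigl[(1-\lamstr\uuptau_1)\Ind_{\{\uuptau_1<\uptau_n\}}\bigr]$ outside $\sB_1$; the function $\widetilde\Lyap$ of \cref{EL4.2A} acts as a barrier showing that this lower bound converges to a function $F>1$ (since $\inf_v\Exp^v_x[\uuptau_1]>0$ and $-\lamstr>0$), and propagating it back into $\sB_1$ through the hitting distribution at $\uptau_2$ (tight by the ABP estimate of \cref{T6.1}) yields $\liminf_n\inf_{\sB_1}\psi_n>1$, a contradiction. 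To salvage your argument you would need a genuinely different mechanism for bounding $\psi_n$ from below on $\widetilde\cK$; as written, that step is not a technicality but the open heart of the lemma.
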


\begin{proof}
Since $c$ is assumed nonnegative, then in view of \cref{T2.3} it is enough
to prove the result for $c\equiv0$.
Also, without loss of generality, we assume that $\widetilde\cK\subset\sB_1$.
We argue by contradiction.
Suppose $\lambda_n=\lambda_{\sB_n}(\cI)\nearrow\lamstr<0$.
Let $\psi_n$ be the eigenfunction of $\cI$ on $\sB_n$ normalized so
that $\min_{\overline\sB_1} \psi_n = 1$.
Its existence is asserted in \cref{T2.1}.
By It\^o's formula we have
\begin{equation*}
\psi_n(x) \,=\, \inf_{v\in\Usm}\,
\Exp_x^v \Bigl[\E^{-\lambda_n(\uuptau_1\wedge\uptau_n)}\,
\psi_n(X_{\uuptau_1\wedge\uptau_n})\Bigr]\,,
\end{equation*}
and the infimum is realized at some element of $\Usm$.
Therefore, invoking \cref{L4.1}, and the inequality $\lambda_n<\lambda_*$,
which holds by \cref{C2.1}, we obtain
\begin{equation}\label{PL4.2A}
\begin{aligned}
\psi_n(x) &\,\ge\, \inf_{v\in\Usm}\,
\Exp_x^v \Bigl[\E^{-\lamstr\uuptau_1}\,\psi_n(X_{\uuptau_1})
\Ind_{\{\uuptau_1<\uptau_n\}}\Bigr]\\
&\,\ge\, \inf_{v\in\Usm}\, \Exp_x^v\bigl[\bigl(1-\lamstr\uuptau_1\bigr)
\Ind_{\{\uuptau_1<\uptau_n\}}\bigr]\,.
\end{aligned}
\end{equation}
Let $g_n(x) = \inf_{v\in\Usm}\,\Prob_x^v (\uuptau_1<\uptau_n)$.
It is clear that $g_n(x)$, being a bounded solution of a Dirichlet problem,
is in $\Sobl^{2,p}(\sB_n\cap\overline\sB_1^c)$, $p>d$, and converges to the
function $1$ uniformly on compact subsets of $\overline\sB_1^c$.
Let
\begin{equation*}
h_n(x) \,\df\, \inf_{v\in\Usm}\,\Exp_x^v\biggl[
\int_0^{\uuptau_1} g_n (X_t)\,\Ind_{\{\uuptau_1<\uptau_n\}}\,\D{t}\biggr]\,.
\end{equation*}
Using the function in \cref{EL4.2A} as a barrier, we deduce
that the family $\{h_n\}$ are solutions of Dirichlet problems,
and they are nondecreasing in $n$, and are locally bounded uniformly in $n$.
Thus, $h_n$ converges uniformly on compact subsets of $\overline\sB_1^c$ to
 $h(x)= \inf_{v\in\Usm}\,\Exp_x^v [\uuptau_1]$ on $\sB_1^c$,
which is also a solution of an exterior Dirichlet problem.
It also follows by the strong maximum principle that $h$ is positive
on $\overline\sB_1^c$.
Note that
\begin{equation}\label{PL4.2B}
\Exp_x^v\bigl[\bigl(1-\lamstr\uuptau_1\bigr)
\Ind_{\{\uuptau_1<\uptau_n\}}\bigr] \,\ge\, g_n(x) -\lamstr h_n(x)
\qquad \forall\,x\in \sB_n\setminus \overline\sB_1\,,
\end{equation}
and for all $v\in\Usm$ by construction.
Let
\begin{equation}\label{PL4.2C}
F_n(x)\,\df\, 2\wedge\bigl(g_n(x) -\lamstr h_n(x)\bigr)\,,\quad x\in\overline\sB_1^c\,.
\end{equation}
We have already shown
that $F_n$ converges uniformly on compact sets to some
continuous bounded function $F>1$ on $\overline\sB_1^c$.

On the other hand, using It\^o's formula and Fatou's lemma, we have
\begin{equation}\label{PL4.2D}
\begin{aligned}
\psi_n(x) &\,\ge\,
\inf_{v\in\Usm}\,\Exp_x^v \Bigl[\E^{-\lamstr\uptau_2}\,\psi_n(X_{\uptau_2})\Bigr]\\
&\,\ge\, \inf_{v\in\Usm}\,\Exp_x^v \bigl[\psi_n(X_{\uptau_2})\bigr]\\
&\,\ge\, \inf_{v\in\Usm}\,\Exp_x^v
\bigl[F_n(X_{\uptau_2})\bigr]
\quad\forall\, x\in\overline\sB_1\,,
\end{aligned}
\end{equation}
where in the last inequality we use \cref{PL4.2A,PL4.2B,PL4.2C}.
Consider the hitting distributions
\begin{equation*}
\beta^v_x(A) \,\df\, \Prob_x^v(X_{\uptau_2}\in A)\,,\quad x\in\overline\sB_1\,,
\ v\in\Usm\,.
\end{equation*}
We claim that the family
$\bigl\{\beta^v_x \colon x\in\overline\sB_1\,,\, v\in\Usm\bigr\}$ is tight.
Indeed, $\beta^v_x(\sB_n^c)$ is the solution of the Dirichlet problem
\begin{equation*}
\cA_v u\,=\, 0\text{\ \ in\ }\sB_1\,,\quad u \,=\,
\Ind_{\sB_n^c}\text{\ \ on\ } \sB_1^c\,.
\end{equation*}
However, by linearity, this is equivalent to the problem
\begin{equation*}
\cA_v u\,=\, - \int_{\RR^d\setminus\{0\}} \Ind_{\sB_n^c}(x+z)\nu(x,v(x),\D{z})
\text{\ \ in\ }\sB_1\,,\quad u \,=\, 0\text{\ \ on\ } \sB_1^c\,.
\end{equation*}
The claim then follows by the ABP estimate in \cref{T6.1}, since the family
$\{\nu(x,v,\cdot\,)\colon x\in\sB_1\}$ is tight by \hyperlink{A2}{(A2)}.
Therefore,
\begin{equation}\label{PL4.2E}
\inf_{x\in\sB_1}\,\inf_{v\in\Usm}\, \int_{\sB_2^c}
F(y) \beta^v_x(\D{y})\,>\,1\,,
\end{equation}
which of course also implies by tightness that the inequality in \cref{PL4.2E}
holds if the integral is restricted to an annulus $\sB_R\setminus\sB_2$ for
some $R$ sufficiently large.
Therefore, \cref{PL4.2D,PL4.2E} together with the uniform convergence of $F_n$ on compact
sets  imply that
\begin{equation*}
\liminf_{n\to\infty}\, \inf_{\sB_1}\, \psi_n \,>\, 1\,,
\end{equation*}
which contradicts the assumption that $\min_{\overline\sB_1} \psi_n = 1$.
This completes the proof.
\end{proof}

Next we prove the existence of an eigenfunction on $\Rd$.

\begin{lemma}\label{L4.3}
Grant \cref{A1.1}, and
let $\lambda^v\df\lim_{n\to\infty}\lambda^v_n$. The following hold
for $v\in\Usm$:
\begin{itemize}
\item[(a)]
There exists a positive function
$\Psi_v\in \Sobl^{2,p}(\Rd)\cap\order(\Lyap)$, $p>d$, satisfying
\begin{equation}\label{EL4.3A}
\cA_v\Psi_v + c_v\, \Psi_v \,=\, \lambda^v \Psi_v\quad \text{in\ } \Rd\,.
\end{equation}
Furthermore, $\lambda^v=\lambda_{\Rd}(\cA_v^c)$.

\item[(b)]
There exists $r_\circ\ge0$, such that for all $r>r_\circ$ we have
\begin{equation}\label{EL4.3B}
\Psi_v(x) \,=\, \Exp^v_x\left[ \E^{\int_0^{\uuptau_r} (c_v(X_s)-\lambda^v)\,\D{s}}\,
\Psi_v(X_{\uuptau_r})\right] \qquad \forall\,x\in \overline\sB^c_r\,.
\end{equation} 
\end{itemize}
\end{lemma}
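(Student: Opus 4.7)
My strategy is to pass to the $n\to\infty$ limit in the Dirichlet eigenvalue problem of \cref{L4.1}. First I establish that $\lambda^v\df\lim_n\lambda^v_n$ is finite: the sequence is nondecreasing by \cref{C2.1} applied to $\cA_v^c$, while the upper bound $\lambda^v_n\le\sE^v_x(c)<\infty$ from \cref{L4.1}, combined with a Dynkin-type estimate on $\Lyap$ via \cref{A1.1}, yields finiteness. Adapting the argument of \cref{L4.2} to the linear operator $\cA_v^c$ in place of $\cI$ gives $\lambda^v\ge 0$. Normalize the Dirichlet eigenfunctions so that $\psi^v_n(0)=1$.

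Next I derive a global dominating bound $\psi^v_n \lesssim \Lyap$. Fix $r_\circ$ large enough that $\cK \subset \sB_{r_\circ}$ and, in case (a) of \cref{A1.1}, $\gamma-\|c\|_\infty-\lambda^v_n>0$ for $n$ large (respectively, in case (b), $\ell-c_v-\lambda^v_n>0$ outside $\sB_{r_\circ}$, using the inf-compactness of $\ell-c$). For $r\ge r_\circ$, \cref{L3.1} applied to the process $\E^{\int_0^\cdot(c_v-\lambda^v_n)\D{s}}\Lyap(X_\cdot)$ shows its drift is nonpositive outside $\sB_r$, yielding
\begin{equation*}
\Exp^v_x\Bigl[\E^{\int_0^{\uuptau_r}(c_v-\lambda^v_n)\D{s}}\, \Lyap(X_{\uuptau_r})\,\Ind_{\{\uuptau_r<\uptau_n\}}\Bigr]\,\le\,\Lyap(x)\,,\qquad x\in\sB_n\setminus\overline\sB_r\,.
\end{equation*}
Combined with the stochastic representation in \cref{L4.1} this gives $\psi^v_n(x)\le M^n_r\Lyap(x)$ for $|x|>r$, with $M^n_r\df\max_{\overline\sB_r}\psi^v_n$; since $\Lyap\ge1$, the same bound extends to all of $\Rd$. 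By \cref{EA1.1C}, $I_v[\psi^v_n,\cdot\,]$ is then bounded by $M^n_r$ times a locally bounded function, so the eigen-equation $\cA_v^c\psi^v_n=\lambda^v_n\psi^v_n$ reads on each $\sB_R$ as a uniformly elliptic local PDE with bounded coefficients and right-hand side controlled by $M^n_R$. A classical Harnack inequality combined with the normalization $\psi^v_n(0)=1$ then bounds $M^n_r$ uniformly in $n$.

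By interior $\Sob^{2,p}$-estimates and the compact embedding $\Sob^{2,p}\hookrightarrow\cC^{1,\alpha}$, a subsequence $\psi^v_{n_k}$ converges in $\cC^1_{\mathrm{loc}}$ to some $\Psi_v\in\Sobl^{2,p}(\Rd)\cap\order(\Lyap)$. The local differential terms pass to the limit from Sobolev convergence; the nonlocal term $I_v[\psi^v_{n_k},\cdot\,]$ passes by dominated convergence, with the envelope $M\Lyap$ coupled with the local boundedness from \cref{EA1.1C} providing the dominating function. Positivity of $\Psi_v$ follows from the strong maximum principle applied to the limiting linear equation. This establishes (a). The identity $\lambda^v=\lambda_\Rd(\cA_v^c)$ is then immediate: $\Psi_v>0$ satisfies the equation on $\Rd$, so $\lambda_\Rd(\cA_v^c)\le\lambda^v$ by definition; conversely, any positive $\varphi$ with $\cA_v^c\varphi\le\lambda\varphi$ on $\Rd$ restricts to a supersolution on each $\sB_n$, giving $\lambda^v_n\le\lambda$ by \cref{T2.1}, hence $\lambda^v\le\lambda$.

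For (b), pass to the limit $n\to\infty$ in the stochastic representation of \cref{L4.1}. Non-explosiveness (a standard consequence of \cref{A1.1}) gives $\Ind_{\{\uuptau_r<\uptau_n\}}\to 1$ a.s.\ and $\uuptau_r<\infty$ a.s., while the Lyapunov estimate derived in the second paragraph provides uniform integrability for the $\cC^1_{\mathrm{loc}}$-convergent subsequence $\psi^v_{n_k}$. The main obstacle is the circularity in the second paragraph: establishing local uniform bounds on $\psi^v_n$ when Harnack-type estimates are not directly available for operators with a nonlocal term. Finiteness of the L\'evy measure $\Pi$ is what makes the argument close, since it allows $I_v$ to be absorbed as a bounded zeroth-order perturbation of a non-degenerate local elliptic operator, to which classical regularity theory applies.
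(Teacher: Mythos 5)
Your overall architecture (finiteness and nonnegativity of $\lambda^v$, a Lyapunov barrier $\psi^v_n\lesssim\Lyap$, elliptic compactness, dominated convergence for part (b)) parallels the paper's, but the pivotal step --- uniform local bounds on $\psi^v_n$ --- has a genuine gap. With the normalization $\psi^v_n(0)=1$ you only obtain $\psi^v_n\le M^n_r\,\Lyap$ with $M^n_r\df\max_{\overline\sB_r}\psi^v_n$, so the source term $\sJ_n(x)=\int\psi^v_n(x+z)\,\nu(x,v,\D{z})$ in the rewritten local equation $\Lg_v\psi^v_n+(c_v-\lambda^v_n)\psi^v_n=-\sJ_n$ is controlled only by $M^n_r$ times a locally bounded function. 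The generalized Harnack inequality then gives
\begin{equation*}
M^n_r\,\le\,\sup_{D}\psi^v_n\,\le\,\kappa\Bigl(\inf_{D}\psi^v_n+\norm{\sJ_n}_{L^d(D_1)}\Bigr)\,\le\,\kappa\bigl(1+C_1M^n_r\bigr)\,,
\end{equation*}
which is vacuous unless $\kappa C_1<1$, and there is no reason for that to hold. The nonlocal term is \emph{not} a bounded zeroth-order perturbation in the sense required: only the piece $-\Bar\nu\,\psi^v_n(x)$ can be absorbed into the zeroth-order coefficient of $\Lg_v$; the remaining piece $\int\psi^v_n(x+z)\,\nu(x,v,\D{z})$ carries values of the unknown from outside the local domain and must be treated as a right-hand side whose size is exactly the quantity you are trying to bound. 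This is precisely the failure of Harnack's inequality for these operators that the paper emphasizes, and acknowledging the circularity does not discharge it.

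The paper closes the loop with a different normalization: replace $\psi^v_n$ by $\kappa_n\psi^v_n$ with $\kappa_n$ the largest constant for which $\Lyap-\kappa_n\psi^v_n>0$, so the rescaled eigenfunction touches $\Lyap$ from below, and then show via the strong maximum principle applied to $\Lg_v(\Lyap-\psi^v_n)-(c_v-\lambda_n)^-(\Lyap-\psi^v_n)\le 0$ on $\sB^c\cap\sB_n$ (where $c_v-\lambda_n\le\ell$, resp.\ $\gamma$) that the contact point must lie in the fixed ball $\sB$. This yields $\psi^v_n\le\Lyap$ with constant $1$, hence $\sJ_n$ is locally bounded uniformly in $n$ and $\inf_D\psi^v_n\le\inf_D\Lyap$, so the generalized Harnack estimate produces genuinely uniform local sup bounds; the contact point in $\sB$ also guarantees that the limit is not identically zero, which your argument would otherwise have to extract from the (unavailable) locally uniform convergence at $0$. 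Once part (a) is repaired this way, your treatment of $\lambda^v=\lambda_\Rd(\cA^c_v)$ and of part (b) (dominated convergence under the bound $\Lyap(x)\ge\Exp^v_x[\E^{\int_0^{\uuptau_r}\ell(X_s)\D{s}}]$) is in line with the paper.
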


\begin{proof}
We first show that $\sE^v_x(c)$ is finite.
The proof is the same under parts (a) or (b) of \cref{A1.1}, so
we work here under part (a).
Choosing $g=\widehat{C} \bigl(\min_\cK \Lyap\bigr)^{-1}\Ind_\cK$, we
write \cref{EA1.1B} as
\begin{equation*}
\cA_v\Lyap_n + (\ell-g)\Lyap_n\,\le\,\cA_v\Lyap + (\ell-g)\Lyap \,\le\, 0\,.
\end{equation*}
Let $\Lyap_n\le\Lyap$ be a sequence of increasing functions in 
$\cC^2(\Rd)\cap\cC_b(\Rd)$ such that $\Lyap_n=\Lyap$ on $\sB_n$.
Note then that
\begin{equation*}
\cA_v\Lyap_n + (\ell-g)\Lyap_n \,\le\, 0 \text{\ \ on\ } \sB_n\,.
\end{equation*}
Thus, by the It\^o's formula and \cref{L3.1}, we obtain
\begin{align*}
&\Exp^v_x\left[\E^{\int_0^{\uptau_m\wedge T} (\ell(X_s)-g(X_s))\,\D{s}}\,
\Lyap_n(X_{\uptau_m\wedge T})\right]-\Lyap(x) \\
&\mspace{100mu}\le\, \Exp^v_x\left[\int_0^{\uptau_m\wedge T}
\E^{\int_0^t (\ell(X_s)-g(X_s))\,\D{s}}
\Bigl(\cA_v\Lyap_n(X_t) + (\ell(X_t)-g(X_t))\Lyap_n(X_t)\Bigr)\,\D{t}\right] \\
&\mspace{100mu}\le\, \Exp^v_x\left[\int_0^{\uptau_m\wedge T}
\E^{\int_0^t (\ell(X_s)-g(X_s))\,\D{s}}
\Bigl(\cA_v\Lyap(X_t) + (\ell(X_t)-g(X_t))\Lyap(X_t)\Bigr)\,\D{t}\right]
\,\le\, 0
\end{align*}
for all $m\le n$.
Thus taking limits as $n\to\infty$,
and applying Fatou's lemma, we arrive at
\begin{equation*}
\Lyap(x) \,\ge\,\Exp^v_x\left[\E^{\int_0^{\uptau_m\wedge T} (\ell(X_s)-g(X_s)) \,\D{s}}\,
\Lyap(X_{\uptau_m\wedge T})\right]\qquad\forall\, m\in\NN\,.
\end{equation*}
Now let $m\to\infty$, and apply Fatou's lemma once more, to obtain
\begin{equation*}
\Lyap(x) \,\ge\, \Exp^v_x\left[\E^{\int_0^{T} (\ell(X_s)-g(X_s)) \,\D{s}}\,
\Lyap(X_{ T})\right]
\,\ge\, \biggl(\inf_\Rd\,\Lyap\biggr)\,
\Exp^v_x\left[\E^{\int_0^{T} (\ell(X_s)-g(X_s)) \,\D{s}}\right].
\end{equation*}
Taking logarithm on both sides, dividing by $T$,
and letting $T\to\infty$, we deduce that $\sE^v_x(\ell-g)<\infty$.
Since $c\in\order(\ell)$, we have $\sE^v_x(c)<\infty$.

As a consequence of the above estimate, together with
\cref{L4.1,L4.2}, and the nonnegativity of $c$, we have $\lambda^v\in[0,\infty)$.
Now choose a ball $\sB\supset \cK$ such that
$(c_v-\lambda_n)\le \ell$ (or $\gamma$)
in $\sB^c$ for all $n$ large enough.
This is possible due to \cref{L4.2}. 
Recall the definition in \cref{E-barnu},
and let $\Lg_v$ denote the `local part' of the operator $\cA_v$,
that is,
\begin{equation}\label{E-Lg}
\Lg_v u(x) \,\df\, \trace(a\grad^2 u(x)) + b_v(x)\cdot\grad u(x) -\Bar\nu u(x)\,.
\end{equation}
We scale $\psi^v_n$ in \cref{L4.1} so that it touches $\Lyap$ from below,
that is, we replace $\psi^v_n$ with $\kappa_n\psi^v_n$ where
\begin{equation*}
\kappa_n \,\df\,
\max\,\bigl\{\kappa\in(0,\infty)\colon\Lyap-\kappa\psi^v_n> 0 \text{\ in\ } \Rd\bigr\}\,.
\end{equation*}
We claim that $\psi^v_n$ can only touch $\Lyap$ in $\sB$.
Indeed, using \cref{E-Lg}, we have
\begin{equation*}
\cA_v (\Lyap-\psi^v_n) + (c_v-\lambda_n)(\Lyap-\psi^v_n)
\,\le\, 0\,\Rightarrow\,
\Lg_v (\Lyap-\psi^v_n)  -(c_v-\lambda_n)^{-} (\Lyap-\psi^v_n)
\,\le\, 0
\end{equation*}
in $\sB^c\cap\sB_n$,
and therefore, by the strong maximum principle, if $\Lyap-\psi^v_n$ vanishes somewhere
in $\sB^c\cap\sB_n$ it has to be identically zero in $\sB^c\cap\sB_n$,
which contradicts
the fact that $\psi^v_n=0$ on $\partial \sB_n$.
Thus there exists $y_n\in \sB$ such that $\Lyap(y_n)=\psi^v_n(y_n)$.
Define
\begin{equation*}
\sJ_n(x) \,\df\, \int_{\Rd} \psi^v_n(x+z) \nu(x,v,\D{z})\,.
\end{equation*}
The Foster--Lyapunov equations
in \cref{EA1.1A,EA1.1B} imply that
$x\mapsto \int_{\Rd} \Lyap(x+z) \nu(x,\D{z})$ is locally bounded.
Therefore, $\sJ_n$ is locally bounded, uniformly in $n$,
since $\psi^v_n\le\Lyap$ under the scaling above.
We write
\begin{equation*}
\Lg_v \psi^v_n + (c-\lambda_n)\psi^v_n \,=\,
-\sJ_n\quad \text{in\ } \sB_n\,.
\end{equation*}
Then by \cite[Theorem~9.20 and~9.22]{GilTru} it follows that for any domains
$D_1\Supset D\supset\sB$
there exists a constant $\kappa$ such that
\begin{equation*}
\sup_{D}\,\psi^v_n \,\le\, \kappa\,\biggl(\inf_{D}\,\psi^v_n + \norm{\sJ_n}_{L^d(D_1)}\biggr)
\,\le\, \kappa\,\biggl(\inf_{D}\,\Lyap + \norm{\sJ_n}_{L^d(D_1)}\biggr)\,.
\end{equation*}
Thus, using the standard theory of elliptic PDE \cite{GilTru},
we deduce that $\norm{\psi^v_n}_{\Sob^{2,p}(D)}$, $p>d$,
is bounded uniformly in $n$, for every fixed bounded set $D$.
Hence we can extract a subsequence $\{\psi^v_{n_k}\}$ such that 
\begin{equation*}
\psi^v_{n_k}\rightharpoonup \Psi_v\quad \text{in\ } \Sobl^{2,p}(\Rd)\,, \quad
\text{and}\quad \psi^v_{n_k}\to \Psi_v\quad \text{in\ } \cC^{1, \alpha}_{\mathrm{loc}}(\Rd)
\end{equation*}
for some $\Psi_v\in \Sobl^{2,p}(\Rd)\cap\order(\Lyap)$.
Moreover, we have 
\begin{equation*}
\cA_v\Psi_v + c_v\Psi_v \,=\, \lambda^v\Psi_v \text{\ \ a.e.\ in\ }\Rd\,.
\end{equation*}
Since $\min_\sB(\Lyap-\Psi_v)=0$ by construction and $\Lyap$ is positive,
it follows from the strong maximum principle that $\Psi_v>0$ in $\Rd$. This gives
\cref{EL4.3A}.
It is evident from \cref{E-lamD} that $\lambda^v\leq \lambda_{\Rd}(\cA_v^c)$. On the 
other hand, using \cref{E-lamD} and \cref{EL4.3A}, it follows
that $\lambda_{\Rd}(\cA_v^c)\leq \lambda^v$. Thus
$\lambda^v=\lambda_{\Rd}(\cA_v^c)$.
This completes the proof of part (a).

Next we prove part (b).
Recall the function $g$ defined in the beginning of the proof, and let
$r_\circ$ be such that $\sB_{r_\circ}\supset\sB$.
Using a similar argument as above, under \cref{A1.1}\,(b), we obtain 
\begin{equation*}
\Lyap(x) \,\ge\,
\Exp^v_x\left[\E^{\int_0^{\uuptau_r\wedge t} (\ell(X_s)-g(X_s))\,\D{s}} \right]
\,=\, \Exp^v_x\left[\E^{\int_0^{\uuptau_r\wedge t} \ell(X_s)\,\D{s}} \right]\quad
\forall\,r\ge r_\circ\,,\ \ \forall\, t\ge 0\,.
\end{equation*}
Letting $t\to\infty$, and using the fact that $\Prob^v_x(\uuptau<\infty)=1$,
by Fatou's lemma we have
\begin{equation}\label{PL4.3A}
\Lyap(x) \,\ge\, \Exp^v_x\left[\E^{\int_0^{\uuptau_r} \ell(X_s)\,\D{s}} \right]
\quad\forall\,r\ge r_\circ\,.
\end{equation}
Under \cref{A1.1}\,(a) we arrive at a similar conclusion with $\ell$ replaced by
$\gamma$.
Note that, in either case, there exists a ball $\sB$ so that
$(c-\lambda_n)\le \ell$ (or $\gamma$) in $\sB^c$ for all large $n$.
Thus \cref{PL4.3A} enables us to use the dominated convergence theorem to take limits in
\cref{EL4.1A} and obtain \cref{EL4.3B}. 
This concludes the proof.
\end{proof}

As a consequence of the
stochastic representation in \cref{EL4.3B}
we obtain the following strict monotonicity result.

\begin{lemma}\label{L4.4}
Suppose that \cref{A1.1} holds for two
cost functions
$c$ and $\Hat{c}$, such that $\Hat{c}\lneq c$.
Then we have $\lambda_{\Rd}(\cA_v^c)<\lambda_{\Rd}(\cA_v^{\Hat{c}})$
for any $v\in\Usm$.
\end{lemma}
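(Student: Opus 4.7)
The plan is to combine a limiting argument from the bounded-domain case with a strict comparison argument on $\Rd$ to obtain the claimed strict monotonicity of the principal eigenvalue in the potential.

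\emph{Step 1 (weak inequality via a limiting argument).} Applying \cref{T2.3}\,(i) on each ball $\sB_n$ yields strict monotonicity of the Dirichlet principal eigenvalues $\lambda_{\sB_n}(\cA_v^c)$ and $\lambda_{\sB_n}(\cA_v^{\Hat{c}})$, for $n$ large enough that $\sB_n$ meets the set $\{\Hat{c}\ne c\}$ in positive Lebesgue measure. Since \cref{L4.3}\,(a), applied separately to each of the two potentials, provides $\lambda_{\sB_n}(\cA_v^c)\to\lambda_\Rd(\cA_v^c)$ and $\lambda_{\sB_n}(\cA_v^{\Hat{c}})\to\lambda_\Rd(\cA_v^{\Hat{c}})$, passing to the limit yields the weak form of the inequality asserted in the lemma.

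\emph{Step 2 (upgrading to strict inequality).} Assume for contradiction that the two eigenvalues coincide on $\Rd$ with common value $\lambda^*$, and let $\Psi^c, \Psi^{\Hat{c}}$ be the corresponding eigenfunctions from \cref{L4.3}\,(a), both lying in $\Sobl^{2,p}(\Rd)\cap\order(\Lyap)$. Use the identity
$$\cA_v\Psi^c + \Hat{c}_v\Psi^c - \lambda^*\Psi^c \,=\, -(c_v-\Hat{c}_v)\Psi^c\,,$$
which, combined with $\Hat{c}\lneq c$ and the nondegeneracy \hyperlink{A3}{(A3)}, exhibits $\Psi^c$ as a positive strict one-sided solution of the $\Hat{c}$-eigenvalue equation at level $\lambda^*$, with strictness occurring on the positive-measure set $\{c\ne\Hat{c}\}$. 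Inserting this strict comparison into the Feynman--Kac representation \cref{EL4.3B} applied to the $\Hat{c}$-problem, normalizing $\Psi^c$ and $\Psi^{\Hat{c}}$ to agree on $\partial\sB_r$ for some $r>r_\circ$, and exploiting the fact that the process under $v$ visits the strictness set with positive probability by \hyperlink{A3}{(A3)}, produces a strict iteration inequality between the two sides of the fixed-point identity that contradicts the assumed equality $\lambda_\Rd(\cA_v^c)=\lambda_\Rd(\cA_v^{\Hat{c}})$. This delivers the strict inequality claimed in the lemma.

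\emph{Main obstacle.} The principal technical difficulty at the contradiction step is the absence of an a priori uniqueness statement for the principal eigenfunction on $\Rd$ (\cref{T4.1}\,(c) is proved only after this lemma, so invoking it here would be circular). This is circumvented by exploiting the Foster--Lyapunov bound \cref{EA1.1C}, which guarantees that $\Psi^c,\Psi^{\Hat{c}}\in\order(\Lyap)$ and that all Feynman--Kac integrands and boundary terms in \cref{EL4.3B} are integrable, together with a strong maximum principle applied on annular regions $\sB_R\setminus\overline\sB_r$ to conclude a strict global comparison after suitable normalization on $\partial\sB_r$. The nondegeneracy \hyperlink{A3}{(A3)} and the local boundedness of the jump measure from \hyperlink{A2}{(A2)} together ensure that the diffusion hits $\{c\ne\Hat{c}\}$ with positive probability before exiting any annulus, which is the quantitative mechanism that converts the pointwise strictness into the required global contradiction.
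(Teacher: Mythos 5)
Your overall route is the same as the paper's: get the weak inequality by monotonicity, then assume the two eigenvalues coincide and derive a contradiction by comparing the two eigenfunctions through the stochastic representation \cref{EL4.3B} and a maximum principle. Step 1 is fine, with one caveat about direction: what \cref{T2.3}\,(i) and the limits from \cref{L4.3} give is $\lambda_\Rd(\cA_v^{\Hat{c}})\le\lambda_\Rd(\cA_v^{c})$, which (and not the printed, reversed inequality) is also what the paper's proof establishes and what is used later in \cref{L4.5}. The problems are in Step 2. First, you cannot "normalize $\Psi^c$ and $\Psi^{\Hat{c}}$ to agree on $\partial\sB_r$": only a scalar multiple is at your disposal, and two positive functions need not be proportional on a sphere. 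The workable normalization is the touching one, $\kappa=\max_{\overline\sB}\bigl(\Psi^{\Hat{c}}\big/\Psi^{c}\bigr)$ for a ball $\sB\supset\sB_{r_\circ}$, so that $w\df\kappa\Psi^{c}-\Psi^{\Hat{c}}\ge0$ on $\overline\sB$ with an interior zero; then, since both eigenfunctions satisfy \cref{EL4.3B} with the common value $\lambda^*$ and $\Hat{c}\le c$ in the exponent, the representation propagates $w\ge0$ to all of $\Rd$.

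Second, your closing mechanism does not close. The representation \cref{EL4.3B} is valid only for $x\in\sB_{r_\circ}^c$ and integrates the trajectory only up to the hitting time of $\overline\sB_{r_\circ}$; the set $\{c\ne\Hat{c}\}$ may lie entirely inside $\sB_{r_\circ}$ (or away from whatever annulus you work on), in which case the paths appearing in the representation never see the strictness and no "strict iteration inequality" arises -- so the claim that \hyperlink{A3}{(A3)} forces the process to visit the strictness set before exiting the region is both unnecessary and, in general, false. Moreover, even when strictness does propagate on $\sB^c$, the touching point of $w$ lies in $\overline\sB$, where the representation is not even asserted, so a strict inequality outside $\sB$ contradicts nothing by itself (a strict positive supersolution at level $\lambda^*$ can perfectly well coexist with the eigenfunction). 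The contradiction must instead be extracted from the equation: subtracting the two eigenvalue equations and using the global bound $w\ge0$ to discard the nonlocal term (a point your sketch is silent on) yields $\Lg_v w-(c_v-\lambda^*)^- w\le0$ in $\Rd$, whence the strong maximum principle at the interior zero gives $w\equiv0$; then $\kappa\Psi^{c}=\Psi^{\Hat{c}}$ forces $c_v\Psi^{c}=\Hat{c}_v\Psi^{c}$ a.e., which contradicts $\Hat{c}\lneq c$ because $\Psi^{c}>0$. In other words, the hypothesis $\Hat{c}\lneq c$ enters only at this final algebraic step, not through any hitting-probability estimate, and no quantitative use of \hyperlink{A3}{(A3)} of the kind you describe is available or needed.
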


\begin{proof}
Abusing the notation, we write $\lambda^v(c) = \lambda_{\Rd}(\cA_v^c)$.
Monotonicity implies that $\lamstr(\Hat{c})\le \lamstr(c)$.
Suppose that $\lambda^v(\Hat{c})=\lambda^v(c)$. Let
$\Psi_v$ and $\widehat\Psi_v$ be the eigenfunctions corresponding to $\cA_v^c$
and $\cA_v^{\Hat{c}}$, respectively. From \cref{L4.3} we see that the
stochastic representation formula \cref{EL4.3B} holds for $\Psi_v$ and $\widehat\Psi_v$.
Choose $\kappa>0$ such that the minimum of $\kappa\Psi_v-\widehat\Psi_v$ on
$\sB\supset\sB_{r_0}$
equals $0$, that is, $\kappa=\max_{\sB} \widehat\Psi_v (\Psi_v)^{-1}$.
Applying the stochastic representation in \cref{EL4.3B}, it then follows that
$\kappa\Psi_v\ge \widehat\Psi_v$.
Writing the difference of the two eigenvalue equations, and using \cref{E-Lg},
we obtain
\begin{equation*}
\Lg_v \bigl(\kappa\Psi_v-\widehat\Psi_v\bigr)
- \bigl(c_v-\lambda^v(c)\bigr)^-\bigl(\kappa\Psi_v-\widehat\Psi_v\bigr)
\,\le\, 0 \quad \text{in\ } \Rd\,.
\end{equation*}
Therefore, by the strong maximum principle, we must have $\kappa\widehat\Psi_v=\Psi_v$
in $\Rd$ which contradicts the fact that $\Hat{c}\lneq c$.
This completes the proof.
\end{proof}

Another consequence of the stochastic representation is uniqueness
of the principal eigenfunction.

\begin{theorem}\label{T4.2}
Grant \cref{A1.1}.
Let  $u\in \Sobl^{2,p}(\Rd)\cap\order(\Lyap)$, $p>d$,
be a positive function satisfying
\begin{equation}\label{ET4.2A}
\cA_v u+ c_vu \,=\, \lambda u \quad \text{a.e.\ in\ } \Rd
\end{equation}
for some $\lambda\ge \lambda^v$, and
\begin{equation}\label{ET4.2B}
u(x) \,=\, \Exp^v_x\left[ \E^{\int_0^{\uptau(\sB^c)} (c_v(X_s)-\lambda)\,\D{s}}\,
u\bigl(X_{\uptau(\sB^c)}\bigr)\right] \quad \forall\,x\in \sB^c\,,
\end{equation}
for some ball $\sB$.
Then we have $\lambda=\lambda^v$ and $u=\kappa\Psi_v$ for some $\kappa>0$,
with $\Psi_v$ as in \cref{EL4.3A}.
\end{theorem}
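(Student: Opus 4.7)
The strategy is to adapt the uniqueness argument from the proof of \cref{L4.4}: use the stochastic representation \cref{ET4.2B} together with the one from \cref{L4.3}\,(b) to obtain a global pointwise comparison between $u$ and a scalar multiple of $\Psi_v$, then reduce the resulting nonlocal inequality to a local one to which the strong maximum principle applies.

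The first step is to align the scales of the two representations.  Choose $r>r_\circ$ large enough that $\sB\subset\sB_r$; by \cref{L4.3}\,(b), $\Psi_v$ satisfies \cref{EL4.3B} on $\overline{\sB}^c_r$.  To extend \cref{ET4.2B} to the same exterior, I would apply the strong Markov property at $\uuptau_r\le\uptau(\sB^c)$.  Setting
\begin{equation*}
G(y) \,\df\, \Exp^v_y\Bigl[\E^{\int_0^{\uptau(\sB^c)}(c_v-\lambda)\,\D s}\,u\bigl(X_{\uptau(\sB^c)}\bigr)\Bigr],
\end{equation*}
one has $G\equiv u$ on $\Rd$: for $y\in\sB^c$ this is the hypothesis, while for $y\in\sB$ we have $\uptau(\sB^c)=0$ because $\sB$ is open and paths are right-continuous.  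Consequently,
\begin{equation*}
u(x) \,=\, \Exp^v_x\Bigl[\E^{\int_0^{\uuptau_r}(c_v(X_s)-\lambda)\,\D s}\,u(X_{\uuptau_r})\Bigr]
\qquad \forall\,x\in \overline{\sB}^c_r.
\end{equation*}

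Next, set $\kappa\df\max_{\overline{\sB}_r} u/\Psi_v>0$, so that $u\le\kappa\Psi_v$ on $\overline{\sB}_r$ with equality attained at some $x_0\in\overline{\sB}_r$ (continuity of both and positivity of $\Psi_v$).  The hypothesis $\lambda\ge\lambda^v$ gives $\E^{-\lambda s}\le\E^{-\lambda^v s}$ for all $s\ge0$, and combining this with the two representations, together with $X_{\uuptau_r}\in\overline{\sB}_r$ a.s., yields
\begin{equation*}
u(x) \,\le\, \Exp^v_x\Bigl[\E^{\int_0^{\uuptau_r}(c_v-\lambda^v)\,\D s}\,u(X_{\uuptau_r})\Bigr]
\,\le\, \kappa\,\Exp^v_x\Bigl[\E^{\int_0^{\uuptau_r}(c_v-\lambda^v)\,\D s}\,\Psi_v(X_{\uuptau_r})\Bigr] \,=\, \kappa\Psi_v(x)
\end{equation*}
for all $x\in\overline{\sB}^c_r$, and hence on all of $\Rd$.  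Setting $w\df\kappa\Psi_v-u\ge 0$ and subtracting the two eigenvalue equations yields
\begin{equation*}
\cA_v w + (c_v-\lambda^v)\,w \,=\, (\lambda^v-\lambda)\,u \,\le\, 0.
\end{equation*}
Decomposing $\cA_v$ via $\Lg_v$ as in \cref{E-Lg} and dropping the nonnegative nonlocal term $\int_\Rd w(x+z)\,\nu(x,v,\D z)$, one obtains
\begin{equation*}
\Lg_v w \,-\, (c_v-\lambda^v)^-\,w \,\le\, 0 \quad\text{in }\Rd.
\end{equation*}
Since $\Lg_v$ is a locally uniformly elliptic linear operator with locally bounded coefficients and the full zeroth-order coefficient above is nonpositive, and $w\ge 0$ attains its global minimum $0$ at $x_0$, the strong maximum principle (applied on each ball $\sB_R\ni x_0$, then taking $R\to\infty$) forces $w\equiv 0$ on $\Rd$.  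Hence $u=\kappa\Psi_v$, and substituting back into the eigenvalue equation gives $(\lambda-\lambda^v)u\equiv 0$, so $\lambda=\lambda^v$ as $u>0$.

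The main technical obstacle is the alignment step.  The representation of $u$ is tied to the specific ball $\sB$ from the hypothesis, whereas \cref{L4.3}\,(b) provides the representation of $\Psi_v$ only on the concentric exteriors $\overline{\sB}^c_r$.  The presence of the L\'evy driving term allows $X_{\uuptau_r}$ to jump anywhere inside $\overline{\sB}_r$, including into $\sB$, so one must verify carefully that the inner Markov expectation collapses to $u(X_{\uuptau_r})$ in both possible regimes.  Once this identification is in place, the remainder of the proof closely mirrors \cref{L4.4}.
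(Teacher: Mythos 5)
Your proposal is correct and follows essentially the same route as the paper's proof: align the two stochastic representations on a common exterior via the strong Markov property, pick $\kappa$ so that $\kappa\Psi_v-u$ has minimum zero on the ball, propagate $u\le\kappa\Psi_v$ to all of $\Rd$ using the representations and $\lambda\ge\lambda^v$, and then conclude via the decomposition through $\Lg_v$ and the strong maximum principle. The only difference is that you spell out the strong-Markov alignment step (which the paper compresses into ``we may assume $\sB\supset\sB_{r_0}$''), and your treatment of it is accurate.
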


\begin{proof}
Due to strong Markov property we may assume that $\sB\supset\sB_{r_0}$.
We choose a constant $\kappa>0$ such that the
minimum of $\kappa\Psi-u$ on $\sB$ equals $0$.
By \cref{ET4.2B,EL4.3B} it then follows that $\kappa\Psi-u\ge 0$ and its minimum 
is attained in $\sB$.
As in the proof of \cref{L4.4},  we have
\begin{equation*}
\Lg (\kappa\Psi-u)
- (c-\lamstr)^-(\kappa\Psi-u) \,\le\, 0\quad \text{in\ } \Rd
\end{equation*}
by \cref{ET4.2A}.
An application of the strong maximum principle then shows that $\kappa\Psi=u$,
which in turn,
implies that $\lambda=\lamstr$.
\end{proof}

The next lemma shows that $\lamstr$ is equal to the risk-sensitive value. 

\begin{lemma}\label{L4.5}
Under \cref{A1.1}, we have $\lambda^v=\sE^v_x(c)$ for all $x\in\Rd$ and $v\in\Usm$. 
\end{lemma}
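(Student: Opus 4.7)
The plan is to establish both inequalities $\lambda^v \le \sE_x^v(c)$ and $\sE_x^v(c) \le \lambda^v$ separately.

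The easy direction $\lambda^v \le \sE_x^v(c)$ follows directly from \cref{L4.1}, which gives $\lambda^v_n \le \sE_x^v(c)$ for all $n$ large enough that $x \in \sB_n$; letting $n \to \infty$ yields the claim.

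For the reverse inequality, I would apply It\^o's formula (via \cref{L3.1}) to the eigenfunction $\Psi_v$ of \cref{L4.3}. Since $\cA_v \Psi_v + c_v \Psi_v = \lambda^v \Psi_v$, the process $Y_t \df \E^{\int_0^t (c_v(X_s) - \lambda^v)\,\D s}\,\Psi_v(X_t)$ is a nonnegative local martingale under $\Prob_x^v$. Localizing at $\uptau_n$ gives
\begin{equation*}
\Psi_v(x) \,=\, \Exp_x^v\Bigl[\E^{\int_0^{T\wedge\uptau_n}(c_v(X_s) - \lambda^v)\,\D s}\,\Psi_v(X_{T\wedge\uptau_n})\Bigr]\,.
\end{equation*}
The Foster--Lyapunov condition of \cref{A1.1}, combined with $\Psi_v \in \order(\Lyap)$, guarantees non-explosion of $\process{X}$ and uniform integrability of the right-hand side; together with Fatou's lemma this lets us pass to the limit $n \to \infty$ to obtain
\begin{equation*}
\Exp_x^v\Bigl[\E^{\int_0^T c_v(X_s)\,\D s}\,\Psi_v(X_T)\Bigr] \,\le\, \Psi_v(x)\,\E^{\lambda^v T}\,.
\end{equation*}

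The main obstacle is to strip the factor $\Psi_v(X_T)$ from inside the expectation, since $\Psi_v$ need not be bounded below uniformly on $\Rd$ (indeed, it could decay to $0$ at infinity). To overcome this I would exploit the stochastic representation of \cref{L4.3}\,(b). Fix $r > r_\circ$, set $m_r \df \min_{\overline\sB_r}\Psi_v > 0$, and consider the first return time to $\overline\sB_r$ after time $T$, namely $\sigma_T \df \inf\{t \ge T \colon X_t \in \overline\sB_r\}$. Applying optional stopping to the supermartingale $Y$ at $\sigma_T$ (with the localization justified as above, using that $c_v \ge 0$ and $\lambda^v \ge 0$ by \cref{L4.2}), and using $\Psi_v(X_{\sigma_T}) \ge m_r$ together with $\sigma_T \ge T$, yields
\begin{equation*}
\Psi_v(x) \,\ge\, m_r\, \Exp_x^v\Bigl[\E^{\int_0^T c_v(X_s)\,\D s}\,\E^{-\lambda^v\sigma_T}\Bigr]\,.
\end{equation*}
Conditioning on $\sF_T$ and applying the strong Markov property at $T$, the conditional expectation of $\E^{-\lambda^v(\sigma_T - T)}$ reduces to $\Exp_{X_T}^v[\E^{-\lambda^v\uuptau_r}]$, which the Foster--Lyapunov inequality controls in terms of $\Lyap(X_T)$ via exponentially decaying tails of $\uuptau_r$. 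Combining this with an auxiliary It\^o estimate on $\Lyap$ using \cref{A1.1} to bound $\Exp_x^v\bigl[\E^{\int_0^T c_v\,\D s}\,\Lyap(X_T)\bigr]$, and splitting the expectation according to whether $\Lyap(X_T)$ is large or not, I expect to obtain $\Exp_x^v\bigl[\E^{\int_0^T c_v(X_s)\,\D s}\bigr] \le C(x)\,\E^{\lambda^v T}$ for some constant $C(x)$. Taking logarithms, dividing by $T$, and letting $T \to \infty$ completes the proof.
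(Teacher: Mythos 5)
Your first inequality is exactly the paper's (it too just quotes \cref{L4.1} and lets $n\to\infty$), but for the reverse inequality you take a genuinely different route. The paper never tries to strip the factor $\Psi_v(X_T)$ from the direct It\^o identity: it perturbs the running cost, setting $\Breve c_n\df c_v\Ind_{\sB_n}+\gamma_\varepsilon\Ind_{\sB_n^c}$ with $\norm{c}_\infty<\gamma_\varepsilon<\gamma$, so that $\Breve c_n-\Breve\lambda_n>0$ outside $\sB_n$; the stochastic representation then forces the corresponding eigenfunction $\Breve\Psi_n$ to attain its infimum inside $\sB_n$, hence to be bounded below by a positive constant, and the supermartingale inequality immediately gives $\sE_x^v(c)\le\sE_x^v(\Breve c_n)\le\Breve\lambda_n$. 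The perturbation is removed by passing to the limit in $n$ and invoking the uniqueness result \cref{T4.2} to identify $\lim_n\Breve\lambda_n$ with $\lambda^v$. Your alternative --- optional stopping at the return time $\sigma_T$, the strong Markov property, and the lower bound $\Exp_y^v\bigl[\E^{-\lambda^v\uuptau_r}\bigr]\gtrsim\Lyap(y)^{-\nicefrac{\lambda^v}{\gamma}}$ obtained from $\Exp_y^v\bigl[\E^{\gamma\uuptau_r}\bigr]\le\Lyap(y)$ --- is sound up to that point and avoids both the cost perturbation and \cref{T4.2}.

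The gap is in the final step. What your argument actually delivers is, for every threshold $M\ge1$,
\begin{equation*}
\Exp_x^v\Bigl[\E^{\int_0^Tc_v(X_s)\,\D s}\Bigr]\,\le\,C_1(x)\,M^{\nicefrac{\lambda^v}{\gamma}}\,\E^{\lambda^vT}\,+\,\frac{1}{M}\,\Exp_x^v\Bigl[\E^{\int_0^Tc_v(X_s)\,\D s}\,\Lyap(X_T)\Bigr]\,,
\end{equation*}
and the only a priori It\^o bound on the last expectation coming from \cref{A1.1} is $\Lyap(x)\,\E^{\kappa T}$ with $\kappa$ of the order of $\widehat C$, which need not be $\le\lambda^v$. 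With $M$ fixed this yields the rate $\max(\lambda^v,\kappa)$, and optimizing $M$ over $T$ only interpolates between the two rates; either way you do not reach $C(x)\,\E^{\lambda^vT}$, so "splitting according to whether $\Lyap(X_T)$ is large" does not close as stated. The step can be repaired: use $\cA_v\Lyap+c_v\Lyap\le\widehat C$ to get $\Exp_x^v\bigl[\E^{\int_0^Tc_v\,\D s}\,\Lyap(X_T)\bigr]\le\Lyap(x)+\widehat C\int_0^T\Exp_x^v\bigl[\E^{\int_0^tc_v\,\D s}\bigr]\,\D t$, feed the displayed inequality back into this, and apply Gr\"onwall with $M$ chosen so large that $\widehat C/M<\lambda^v$ (handling $\lambda^v=0$ by letting $M\to\infty$ at the end). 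That bootstrap is an additional idea, not a routine consequence of what you wrote, and it is precisely the difficulty the paper sidesteps by modifying the cost at infinity.
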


\begin{proof}
We have already shown that  $\lambda^v\le \sE^v_x(c)$ in \cref{L4.1}.
Thus we need to show the reverse inequality.
We first establish this under \cref{A1.1}\,(a).
Choose $\varepsilon>0$ small enough so that
$\gamma_\varepsilon= \norm{c}_\infty + \varepsilon<\gamma$, and define
\begin{equation*}
\Breve{c}_n(x) \,\df\, c_v(x)\Ind_{\sB_n}(x) + \gamma_\varepsilon \Ind_{\sB^c_n}(x)\,.
\end{equation*}
We have $\lambda_{\Rd}(\Breve{c}_n)<\gamma_\varepsilon$ by \cref{L4.4}.
Let $(\Breve\Psi_n, \Breve\lambda_n)$ be the eigenpair satisfying \cref{L4.3}\,(a)
with the cost function $\Breve{c}_n$.
Note that $\Breve{c}_n\ge\Breve\lambda_n$ in $\sB_n^c$.
Using It\^{o}'s formula it is straightforward to verify that
$\Breve\Psi_n(x)\ge \min_{\sB_n}\Breve\Psi_n$ (see for instance, \cref{L4.3}\,(b)).
Again, applying \cref{L3.1} together with Fatou's lemma we obtain
\begin{equation*}
\biggl(\min_{\sB_n}\,\Breve\Psi_n\biggr)
\Exp^v_x\Bigl[\E^{\int_0^T (\Breve{c}_n(X_s)-\Breve\lambda_n) \,\D{s}}\Bigr]
\,\le\, \Exp^v_x\left[\E^{\int_0^T (\Breve{c}_n(X_s)-\Breve\lambda_n) \,\D{s}}\,
\Breve\Psi_n(X_T)\right] \,\le\, \Breve\Psi_n(x)\,.
\end{equation*}
Now taking logarithm on both sides, dividing by $T$, and letting $T\to\infty$,
we obtain $\Breve\lambda_n\ge \sE^v_x(c_n)$.
In particular, we have
\begin{equation*}
\Breve\lambda_n \,=\, \sE^v_x(\lambda^v\Breve{c}_n) \,\ge\, 0\quad \forall \, n\in\NN\,.
\end{equation*}
Also note that
$\Breve\lambda\df\lim_{n\to\infty} \Breve\lambda_n \ge \sE^v_x(c)\ge \lambda^v(c)$.

In order to complete the proof, it remains to show that $\Breve\lambda=\lambda^v$.
As in the proof of \cref{L4.3}, we can find $r_\circ>0$
such that $\Breve{\Psi}_n\le \Lyap$ and it touches
$\Lyap$ at some point in $\sB_{r_\circ}$.
We can then use \cref{EA1.1A} as a barrier, and pass to the limit
to obtain some positive $\Breve\Psi\in\Sobl^{2,p}(\Rd)$ which satisfies
\begin{equation*}
\cA_v \Breve\Psi + c_v\Breve\Psi \,=\, \Breve\lambda\Breve\Psi\quad\text{on\ }\Rd\,.
\end{equation*}
By \cref{L4.3}\,(b) we have
\begin{equation}\label{PL4.5A}
\Breve\Psi_n(x) \,=\, \Exp^v_x\left[ \E^{\int_0^{\uuptau_r}
(\Breve{c}_n(X_s)-\Breve\lambda_n)\,\D{s}}\,
\Breve\Psi_n(X_{\uuptau_r})\right] \quad \forall\, x\in \sB^c_r\,,
\end{equation}
for some $r>0$.
We can then use \cref{PL4.3A} and dominated convergence to
take limits in \cref{PL4.5A} as $n\to\infty$ to obtain
\begin{equation*}
\Breve\Psi(x) \,=\, \Exp^v_x\left[ \E^{\int_0^{\uuptau_r} (c_v(X_s)-\Breve\lambda)\,\D{s}}\,
\Breve\Psi(X_{\uuptau_r})\right] \quad \forall\, x\in \sB^c_r\,.
\end{equation*}
Combining this with \cref{T4.2} completes the proof.

Next, consider \cref{A1.1}\,(b).
Here, we define
\begin{equation*}
\Breve{c}_n(x) \,\df\, c_v(x) + \frac{1}{2}\bigl(\ell(x)-c_v(x)\bigr)^+
\Ind_{\sB^c_n}(x)\,,
\end{equation*}
and let $(\Breve\Psi_n,\Breve\lambda_n)$ be the eigenpair associated with this running
cost.
Then we can repeat the above argument to first deduce that
$\Breve\lambda \ge \sE^v_x(c)\ge \lambda^v$,
and then establish that $\Breve\lambda=\lambda^v$.
This completes the proof.
\end{proof}

Now are now ready to complete the proof of \cref{T4.1}.

\begin{proof}[Proof of \cref{T4.1}]
Part (a) follows from \cref{L4.3}. Part (b) follows from \cref{L4.3}(a) and \cref{L4.5}.

It remains to prove part (c).
Let $u\in\Sobl^{2,d}(\Rd)\cap\order(\Lyap)$ be a positive solution to
\begin{equation*}
\cA_v u+ c_v u \,=\, \lambda^v u \quad \text{in\ } \Rd\,.
\end{equation*}
Applying \cref{L3.1} and Fatou's lemma it is easily seen that
\begin{equation*}
u(x) \,\ge\, \Exp^v_x\left[ \E^{\int_0^{\uptau(\sB^c)} (c_v(X_s)-\lambda^v)\,\D{s}}\,
u\bigl(X_{\uptau(\sB^c)}\bigr)\right] \quad \forall\,x\in \sB^c\,,
\end{equation*}
for any ball $\sB$.
As done earlier, choose $\kappa>0$ such that 
the minimum of $\kappa u-\Psi_v$ in $\overline\sB$ equals $0$.
Applying the arguments in the proof of \cref{T4.2} we deduce that $\kappa u=\Psi_v$.
This completes the proof.
\end{proof}

\subsection{Proof of \texorpdfstring{\cref{T1.1}}{}}\label{S4.2}
In this section we present the proof of \cref{T1.1}.

\begin{proof}
Let $(V_n,\lambda_n)$ be the principal Dirichlet eigenpair of the operator $\cI$
in $\sB_n$.
Following the arguments of \cref{L4.3} and using \cref{EA1.1C}, we can show that there exists
some positive function $V\in\Sobl^{2,p}(\Rd)\cap\order(\Lyap)$, $p>d$,
satisfying
\begin{equation}\label{PT1.1A}
\cI V \,=\, \lamstr V\quad \text{a.e.\ in\ } \Rd\,,\quad\text{and\ } V(0)=1\,,
\end{equation}
where $V$ is obtained as a subsequential limit of $V_n$ in $\Sobl^{2,p}(\Rd)$,
and $\lamstr=\lim_{n\to\infty}\lambda_n$.
This completes the proof of part (a).

Part (b) follows from a standard measurable selection theorem, for example,
Filippov’s implicit function theorem in
\cite[Theorem 18.17]{ali-bor}.

Turning to part (c), we first show that a control satisfying \cref{ET1.1B} is optimal.
By \cref{L3.1} we can employ It\^o's formula, and the
method of proof of \cite[Lemma~2.3\,(i)]{AB18} to assert
that $\lambda_n \le \sE_x(c,Z)$ for all $x\in\sB_n$, and $Z\in\Uadm$.
Therefore, in view of \cref{T4.1}, we obtain
\begin{equation}\label{PT1.1B}
\lamstr\,\le\, \sE^* \,\le\, \sE^v \,=\, \lambda^v\qquad\forall\,v\in\Usm\,,
\end{equation}
where we use the definitions in \cref{Enot4.1}.
Let $\bUsm\subset\Usm$ denote the set of Markov controls satisfying \cref{ET1.1B}.
If $v\in\bUsm$, or equivalently, if $\cA^c_v V = \lamstr V$, then
\cref{T4.1} and \cref{PT1.1B} imply that $\Psi_v=V$ and $\lamstr=\lambda^v$.
Thus we have shown that
\begin{equation}\label{PT1.1C}
V\,=\,\Psi_v\,,\quad\text{and\ \ } \lamstr=\sE^*=\lambda^v
\qquad\forall\,v\in\bUsm\,.
\end{equation}
It is also clear that $\lamstr=\lambda_\Rd(\cI)$, with the second as
defined in \cref{E-lamD}.

Now, let $\Usms$ denote the class of optimal stationary Markov controls,
and choose an arbitrary $v\in\Usms$.
Then $\cI V_n = \lambda_n V_n$ implies that $\cA^c_v V_n \ge \lambda_n V_n$,
which in turn implies that
\begin{equation}\label{PT1.1D}
\begin{aligned}
V_n(x) &\,\le\, \Exp_x^v \Bigl[\E^{\int_{0}^{\uuptau_r}
[c_v(X_s)-\lambda_n]\,\D{s}}\,
V_n(X_{\uuptau_r})\,\Ind_{\{\uuptau_r<T\wedge\uptau_n\}}\Bigr]\\
&\mspace{100mu} +\Exp_x^v \Bigl[\E^{\int_{0}^{T}[c_v(X_s)-\lambda_n]\,\D{s}}\,
V_n(X_{T})\,
\Ind_{\{T<\uuptau_r\wedge\uptau_n\}}\Bigr]\quad\forall\,T>0\,,
\end{aligned}
\end{equation}
$r\in(0,n)$, 
and $x\in \sB_n\setminus\sB_r$.
Choose $r$, and $n$ large enough so that
$\norm{c}_\infty-\lambda_n< \gamma$ and $\cK\subset\sB_{r}$.
We claim that the last term in \cref{PT1.1D} tends to $0$ as $T\to\infty$.
Indeed, since $V_n\leq \Lyap$, we have
\begin{align*}
\Exp_x^v \Bigl[\E^{\int_{0}^{T}[c_v(X_s)-\lambda_n]\,\D{s}}\,
V_n(X_{T})\,
\Ind_{\{T<\uuptau_r\wedge\uptau_n\}}\Bigr] & \,\le\,
\E^{(\norm{c}_\infty-\lambda_n-\gamma)T}\, \Exp_x^v \Bigl[\E^{\gamma T }\,
\Lyap(X_{T})\,
\Ind_{\{T<\uuptau_r\wedge\uptau_n\}}\Bigr]
\\
&\,\le\, \E^{(\norm{c}_\infty-\lambda_n-\gamma)T}\, \Lyap(x)
\,\xrightarrow[T\to\infty]{}\, 0\,,
\end{align*}
where the second inequality follows by \cref{EA1.1A}.
Same conclusion holds under \cref{A1.1}(b).
Thus, first taking limits in \cref{PT1.1D} as $T\to\infty$, using monotone
convergence for the first term, and then
employing \cref{A1.1} and dominated convergence to take limits as $n\to\infty$,
we obtain
\begin{equation}\label{PT1.1E}
V(x) \,\le\, \Exp_x^v \Bigl[\E^{\int_{0}^{\uuptau_r}
[c_v(X_s)-\lamstr]\,\D{s}}\,V(X_{\uuptau_r})\Bigr]\,.
\end{equation}
Using \cref{PT1.1E} together with $\cA^c_v V\ge\lamstr V$ and
$\cA^c_v \Psi_v=\lambda^v \Psi_v$ from \cref{T4.1}, and the fact that
$\lambda^v=\lamstr$ by the optimality of $v$ and \cref{PT1.1C},
it follows as in the proof of \cref{T4.2} that $\Psi_v=V$.
Thus we have shown
\begin{equation}\label{PT1.1F}
\Usms\,=\,\bUsm\,.
\end{equation}

\Cref{PT1.1C,PT1.1F} show that $\lamstr=\sE^*$, and any solution $V$ of
\cref{PT1.1A} equals $\Psi_v$ for any optimal stationary Markov control $v\in\Usm$.
This of course implies uniqueness of the solution and the verification
of optimality result in the theorem, and completes the proof.
\end{proof}

\section{A risk-sensitive maximization problem}\label{S-max}

In this section we study a risk-sensitive (reward) maximization problem.
In addition to \hyperlink{A1}{(A1)}--\hyperlink{A3}{(A3)}, throughout
this section
we assume the following.

\smallskip
\begin{enumerate}[(A4)]
\item[\hypertarget{A4}{(A4)}]
For some constant $C_0$ we have
\begin{equation}\label{EA4A}
\sup_{\zeta\in\Act}\, \langle b_\circ(x, \zeta),x\rangle^{-}
\,\le\, C_0\, (1+\abs{x}^2)\quad \forall\, x\in\Rd\,.
\end{equation}
In addition, $\nu$ satisfies
\begin{equation}\label{EA4B}
\sup_{(x, \zeta)\in\Rd\times\Act}\,
\int_{\Rd} \frac{\abs{x}^2}{1+\abs{x+z}^2}\,\nu(x,\zeta,\D{z})\,<\,\infty\,.
\end{equation}
\end{enumerate}
\smallskip

We note that \cref{EA4B} holds if for some $\theta\in (0,1)$ we have
$|g(x, \zeta,\xi)|\leq \theta |x|$ for all
$\zeta\in\Act, \xi\in\RR^m\setminus\{0\}$, and
all large enough $|x|$.
In this section, $c\colon\Rd\times\Act\to\RR$
is a continuous function which is bounded from above,
representing a running reward.
With $\sE_x(c, Z)$ as in \cref{D1.1},
the optimal value for the maximization problem is defined as
\begin{equation}\label{E-max2}
\Hat\sE^* \,\df\, \sup_{x\in\Rd}\, \sup_{Z\in\Uadm}\, \sE_x(c,Z)\,,
\end{equation}
respectively.
For this maximization problem, the operator takes the form
\begin{equation*}
\cIm f(x) \,\df\, \trace\bigl(a(x)\grad^2 f(x)\bigr) 
+ \max_{\zeta\in\Act}\,\bigl\{I[f,x, \zeta] + b(x,\zeta)\cdot \grad f(x) + c(x,\zeta) f(x)\bigl\}
\end{equation*}
for $f\in \cC^2(\Rd)\cap\cC_b(\Rd)$. By \cref{T3.1} there exists a unique
$w_n \in \cC_{b}(\Rd)\cap\Sobl^{2,p}(\sB_n)$, $p>d$,
satisfying 
\begin{equation}\label{E-max3}
\begin{split}
\cIm w_n &\,=\, \varrho_n \, w_n \quad \text{in\ } \sB_n\,,\\
w_n &\,=\,0 \quad \text{in\ } \sB_n^c\,,\\
w_n &\,>\, 0 \quad \text{in\ } \sB_n\,, \quad w_n(0)= 1.
\end{split}
\end{equation}
Furthermore, $\varrho_n< \varrho_{n+1}$ for all $n$.
We assume the following \textit{near monotone} condition.

\smallskip
\noindent
\textbf{\hypertarget{H}{(H)}}
The running reward function is bounded above in $\Rd$, and
\begin{equation*}
\lim_{n\to\infty}\,\varrho_n \,>\, 
\underline{C}\,\df\,\lim_{r\to\infty}\,
\sup_{(x,\zeta)\in \sB^c_r\times\Act} c(x,\zeta)\,.
\end{equation*}

\begin{remark}
Hypothesis \hyperlink{H}{(H)} implies
that the process under an optimal control cannot be transient.
This is somewhat necessary for the  risk-sensitive value
 and the principal eigenvalue of the operator $\cIm$ in $\Rd$ to be equal.
Even for local operators, that is, with $\nu=0$,
it is known from \cite[Example~3.1]{ABS19}
that  the principal eigenvalue can be strictly smaller than the risk-sensitive value,
even for uncontrolled problems.
\end{remark}

Our main result of this section is the following.

\begin{theorem}\label{T5.1}
Grant \hyperlink{A1}{\ttup{A1}}--\hyperlink{A4}{\ttup{A4}} and
\hyperlink{H}{\ttup{H}}. Then the following hold.
\begin{itemize}
\item[(a)]
$\Hat\sE^*= \lim_{n\to\infty} \varrho_n$.

\item[(b)]
There exists a unique positive
$\Phi_*\in \cC_b(\Rd)\cap\Sobl^{2,p}(\Rd)$, $p>d$, satisfying
\begin{equation}\label{ET5.1A}
\cIm \Phi_* \,=\, \Hat\sE^*\Phi_*\quad \text{in\ }\Rd\,,
\quad \text{and}\quad \Phi_*(0)=1\,.
\end{equation}

\item[(c)]
A stationary Markov control $v$ is optimal if and only if 
\begin{equation}\label{ET5.1B}
I_v[\Phi_*, x] + b_v(x)\cdot \grad \Phi_*(x) + c_v(x) \Phi_*(x) \,=\,
\max_{\zeta\in\Act}\,\bigl\{I[\Phi_*, x, \zeta] + b(x,\zeta)\cdot \grad \Phi_*(x)
+ c(x,\zeta) \Phi_*(x)\bigl\}
\end{equation}
almost everywhere in $\Rd$.
\end{itemize}
\end{theorem}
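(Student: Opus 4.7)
The strategy is to mirror the proof of \cref{T1.1}, replacing the Foster--Lyapunov barrier provided by \cref{A1.1} with the spectral gap encoded in \hyperlink{H}{(H)}. By \hyperlink{H}{(H)}, first fix $r_0>0$, $\delta>0$, and $n_0\in\NN$ such that $c(x,\zeta)-\varrho_n\le -\delta$ for all $x\in\overline\sB_{r_0}^c$, $\zeta\in\Act$, and $n\ge n_0$. Choosing, via Filippov's theorem, a measurable selector $v_n^*$ of the maximum in \cref{E-max3}, so that $\cA^c_{v_n^*}w_n=\varrho_n w_n$ in $\sB_n$, and applying \cref{L3.1} up to the stopping time $\uuptau_{r_0}\wedge\uptau_n\wedge T$, I would let $T\to\infty$ (justified since $w_n$ is bounded on $\overline\sB_n$ and the exponential factor decays like $\E^{-\delta T}$ on $\{\uuptau_{r_0}>T\}$) to obtain, in direct analogy with \cref{EL4.1A}, the stochastic representation
\begin{equation*}
w_n(x) \,=\, \Exp_x^{v_n^*}\Bigl[\E^{\int_0^{\uuptau_{r_0}}(c_{v_n^*}(X_s)-\varrho_n)\,\D{s}}\,w_n\bigl(X_{\uuptau_{r_0}}\bigr)\,\Ind_{\{\uuptau_{r_0}<\uptau_n\}}\Bigr]\,,\quad x\in\sB_n\setminus\overline\sB_{r_0}\,.
\end{equation*}
Since $X_{\uuptau_{r_0}}\in\overline\sB_{r_0}$ and the exponential factor is bounded by $1$, this forces $\max_{\Rd} w_n=\max_{\overline\sB_{r_0}} w_n$.

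Next, I would argue by contradiction to show that $M_n\df\max_\Rd w_n$ remains bounded. If $M_n\to\infty$ along a subsequence, consider $u_n\df w_n/M_n$, which again solves $\cIm u_n=\varrho_n u_n$ with $\max u_n=1$ attained in $\overline\sB_{r_0}$ and $u_n(0)=M_n^{-1}\to 0$. Since $\nu(x,\zeta,\Rd)=\Bar\nu$ is a finite constant by \cref{E-barnu}, the nonlocal term $\int u_n(\cdot+z)\,\nu(\cdot,v_n^*,\D{z})$ is uniformly bounded by $\Bar\nu$; treating it as a zeroth-order perturbation of the local part as in \cref{L4.3} yields uniform $\Sobl^{2,p}$ bounds on $\{u_n\}$. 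Extracting a subsequential limit $u\in\Sobl^{2,p}(\Rd)$ (with the nonlocal integral passing to the limit by dominated convergence, using $u_n\le 1$ and the finiteness of $\Pi$), we have $u\ge 0$, $u(0)=0$, $u(x_*)=1$ for some $x_*\in\overline\sB_{r_0}$, and $\cIm u=\varrho u$. For any measurable selector $v$ of the maximum for $u$, the inequality $\cA^c_v u\le\varrho u$ rewrites as $\Lg_v u+(c_v-\varrho)u\le -\int u(\cdot+z)\,\nu(\cdot,v,\D{z})\le 0$ (cf.\ \cref{E-Lg}), exhibiting $u$ as a nonnegative supersolution of a local uniformly elliptic operator with bounded coefficients; the strong maximum principle then forces $u\equiv 0$, contradicting $u(x_*)=1$. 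Hence $\sup_n M_n<\infty$, and a further limiting procedure produces $\Phi_*\in\cC_b(\Rd)\cap\Sobl^{2,p}(\Rd)$ solving \cref{ET5.1A} with $\Phi_*(0)=1$, and $\Phi_*>0$ by the strong maximum principle.

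For part~(a), the upper bound $\Hat\sE^*\le\varrho$ follows from the subsolution inequality $\cA^c_v\Phi_*\le\varrho\Phi_*$ valid for every $v\in\Usm$ combined with \cref{L3.1}: splitting the resulting supermartingale inequality at the first exit time $\uptau_R$ from a large ball $\sB_R$, using positivity of $\Phi_*$ on $\overline\sB_R$ to extract a uniform lower bound, and exploiting the near-monotone gap $\sup_{\sB_R^c\times\Act}c<\varrho$ for large $R$ to control excursions outside $\sB_R$, leads to $\sE_x(c,Z)\le\varrho$ for every $Z\in\Uadm$. For the reverse inequality, take a measurable selector $v^*$ of \cref{ET5.1B}, so that $\cA^c_{v^*}\Phi_*=\varrho\Phi_*$; applying \cref{L3.1} on balls $\sB_M$ and letting $M\to\infty$ (licit by boundedness of $\Phi_*$ and non-explosion under \hyperlink{A1}{(A1)}--\hyperlink{A2}{(A2)}, which together with \hyperlink{A4}{(A4)} confine jump displacements) yields
\begin{equation*}
\Phi_*(x) \,=\, \Exp_x^{v^*}\Bigl[\E^{\int_0^T(c_{v^*}(X_s)-\varrho)\,\D{s}}\,\Phi_*(X_T)\Bigr]\,,
\end{equation*}
whence $\sE_x^{v^*}(c)\ge\varrho$ via $\Phi_*\le\norm{\Phi_*}_\infty$. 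This proves part~(a) and the ``if'' direction of~(c).

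Uniqueness in (b) is settled by a touching argument parallel to \cref{T4.2}: any positive $\Tilde\Phi\in\cC_b(\Rd)\cap\Sobl^{2,p}(\Rd)$ satisfying \cref{ET5.1A} obeys the same exterior stochastic representation as $\Phi_*$; after rescaling so that $\min(\Tilde\Phi-\kappa\Phi_*)=0$ on a sufficiently large ball, the strong maximum principle yields $\Tilde\Phi=\kappa\Phi_*$. The converse direction in (c) proceeds as follows: if $v\in\Usm$ is optimal with $\sE^v_x(c)=\varrho$, an analogous Dirichlet approximation (invoking \hyperlink{H}{(H)} to bound the Dirichlet eigenfunctions of the linear operator $\cA^c_v$) yields a positive $\Psi_v\in\cC_b(\Rd)\cap\Sobl^{2,p}(\Rd)$ with $\cA^c_v\Psi_v=\varrho\Psi_v$; matching $\Phi_*$ to $\Psi_v$ via the touching argument gives $\Phi_*=\kappa\Psi_v$, which forces $v$ to realize the maximum in \cref{ET5.1B}. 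The principal technical hurdle is the uniform bound on $M_n$: unlike in \cref{T1.1} no Foster--Lyapunov barrier is available, and the bound must be extracted purely from the spectral gap $\varrho-\underline{C}>0$ through the contradiction-plus-strong-maximum-principle scheme outlined above.
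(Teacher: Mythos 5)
Your construction of $\Phi_*$ is sound and is essentially a reshuffling of the paper's: the paper normalizes $w_n$ by $\sup_{\sB_{r_\circ}}w_n$ so that the rescaled eigenfunctions are bounded by $1$ and attain their maxima in a fixed compact set, whereas you keep $w_n(0)=1$ and rule out $\max_{\Rd}w_n\to\infty$ by a rescaling/strong-maximum-principle contradiction; the two devices are interchangeable. The genuine gaps are elsewhere. First, the upper bound $\sE_x(c,Z)\le\varrho_*$ for \emph{every} admissible $Z$, which is half of part (a), does not follow from the supermartingale inequality for $\E^{\int_0^t(c-\varrho_*)\D{s}}\Phi_*(X_t)$ alone, because $\inf_{\Rd}\Phi_*=0$ (indeed $\Phi_*$ vanishes at infinity). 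Your proposed splitting at $\uptau_R$ controls paths that are inside $\sB_R$ at time $T$, but not those that end outside, and iterating over excursions does not obviously close. The paper's mechanism is a perturbation: $\phi\df\Phi_*+\varepsilon\chi$ with $\chi$ a cutoff equal to $1$ off $\sB_{2r_\circ}$, so that $I[\chi,x,\zeta]\le0$ there and the gap in \hyperlink{H}{(H)} gives $\cIm\phi\le(\varrho_*+\delta)\phi$ with $\inf_{\Rd}\phi>0$. This positive-infimum strict supersolution is the missing ingredient.

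Second, and most seriously, the ``only if'' half of (c). You assert that \hyperlink{H}{(H)} lets you run the Dirichlet approximation for the \emph{linear} operator $\cA^c_v$ of an arbitrary optimal $v$. But \hyperlink{H}{(H)} constrains the nonlinear eigenvalues $\varrho_n$ only, and since $\lambda_{\sB_n}(\cA^c_v)\le\varrho_n$, nothing a priori prevents $\Hat\lambda(\cA^c_v)\df\lim_n\lambda_{\sB_n}(\cA^c_v)\le\underline{C}$, in which case the Dirichlet eigenfunctions of $\cA^c_v$ need not peak in a fixed compact set and the whole-space limit can degenerate. The paper closes this with a separate argument: $t\mapsto\Hat\lambda(\cA^c_v+t\Ind_\sB)$ is continuous (convexity of the $\lambda_n$ in $t$ plus monotone, pointwise bounded convergence) with range containing $[\Hat\lambda(\cA^c_v),\infty)$, so if $\Hat\lambda(\cA^c_v)\le\underline{C}$ one finds $t_\circ$ with $\underline{C}<\Hat\lambda(\cA^c_v+t_\circ\Ind_\sB)<\Hat\sE^*$, builds the corresponding bounded eigenfunction, and deduces $\sE_x(c,v)\le\Hat\lambda(\cA^c_v+t_\circ\Ind_\sB)<\Hat\sE^*$, contradicting optimality. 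Nothing in your proposal substitutes for this step. A lesser point: you invoke \hyperlink{A4}{(A4)} only to ``confine jump displacements,'' but non-explosion already follows from \hyperlink{A1}{(A1)}--\hyperlink{A2}{(A2)}; in the paper \hyperlink{A4}{(A4)} enters through \cref{L5.1}, which yields $\Phi_*(x)\to0$ as $\abs{x}\to\infty$ and hence the exterior representation \cref{EL5.2B} on which your uniqueness and necessity arguments depend, so that representation still needs an actual derivation in your scheme.
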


The remaining part of this section is devoted to the proof of \cref{T5.1}
which requires the results in \cref{L5.1,L5.2,L5.3} which follow.
Let us begin with the following estimate on the hitting time probabilities.

\begin{lemma}\label{L5.1}
For any $T>0$ and $r>0$, we have
\begin{equation*}
\lim_{R\to\infty}\, \sup_{x\in\sB_R^c}\, \sup_{Z\in\Uadm}\,
\Prob^Z_x(\uuptau_r< T)\,=\,0\,.
\end{equation*}
\end{lemma}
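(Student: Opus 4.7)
The plan is to construct an explicit supersolution that vanishes at infinity and use it as a barrier to bound the hitting probability uniformly in the control. Specifically, I will use the function
\begin{equation*}
U(x)\,\df\,\frac{1}{1+\abs{x}^2}\,,
\end{equation*}
which is bounded, strictly positive, vanishes at infinity, and satisfies $U(y)\ge (1+r^2)^{-1}$ for every $y\in\overline{\sB}_r$. The key estimate to establish is that there exists a constant $C$, independent of $\zeta\in\Act$, such that
\begin{equation*}
\abs{\cA U(x,\zeta)}\,\le\, C\,U(x)\qquad\forall\,(x,\zeta)\in\Rd\times\Act\,.
\end{equation*}

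To verify this, I would first bound the local part $\trace(a\grad^2 U)+b(x,\zeta)\cdot\grad U$. Direct computation gives $\grad U=-2(1+\abs{x}^2)^{-2}x$ and an explicit formula for $\grad^2 U$, whose entries decay like $(1+\abs{x}^2)^{-2}$. Combined with the affine growth of $a$ from \hyperlink{A2}{(A2)}, this yields $\abs{\trace(a\grad^2 U)}\le C_1 U(x)$. For the drift, \hyperlink{A2}{(A2)} together with \hyperlink{A4}{(A4)} gives $\abs{\langle b_\circ(x,\zeta),x\rangle}\le C(1+\abs{x}^2)$, and by Cauchy--Schwarz and \cref{EA3},
$\babs{\int_\Rd z\,\nu(x,\zeta,\D z)}\le\sqrt{\Bar\nu C_0(1+\abs{x}^2)}$,
so that $\abs{b(x,\zeta)\cdot\grad U(x)}\le C_2 U(x)$. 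For the non-local part, I would split $I[U,x,\zeta]$ into positive and negative parts: $(U(x+z)-U(x))^-\le U(x)$ gives the trivial bound $\Bar\nu\, U(x)$, while for the positive part I use precisely \cref{EA4B}, which implies $\int\frac{\D\nu(x,\zeta,\D z)}{1+\abs{x+z}^2}\le M/\abs{x}^2\le 2M U(x)$ for $\abs{x}\ge 1$ (and the trivial bound $\Bar\nu\le 2\Bar\nu U(x)$ for $\abs{x}\le 1$). This is the main obstacle of the proof, and the only place where the new hypothesis \hyperlink{A4}{(A4)} is essential.

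Given $\abs{\cA U}\le C U$, I would apply It\^o's formula (localized by $\uptau_n$ and then passing to the limit, since $U$, $\abs{\grad U}^2\norm{\upsigma}^2$, and the jump integrand are bounded) to the process $\E^{-Ct}U(X_{t\wedge\uuptau_r})$ under an arbitrary $Z\in\Uadm$. The resulting process is a nonnegative supermartingale, so optional sampling yields
\begin{equation*}
U(x)\,\ge\,\Exp^Z_x\bigl[\E^{-C(T\wedge\uuptau_r)}U(X_{T\wedge\uuptau_r})\bigr]
\,\ge\,\E^{-CT}(1+r^2)^{-1}\Prob^Z_x(\uuptau_r\le T)\,,
\end{equation*}
using that $\abs{X_{\uuptau_r}}\le r$ on $\{\uuptau_r<\infty\}$. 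Rearranging,
\begin{equation*}
\sup_{Z\in\Uadm}\,\Prob^Z_x(\uuptau_r< T)\,\le\,(1+r^2)\,\E^{CT}\,\frac{1}{1+\abs{x}^2}\,,
\end{equation*}
so taking the supremum over $x\in\sB_R^c$ and letting $R\to\infty$ gives the conclusion.
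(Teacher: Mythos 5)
Your proof is correct, and it reaches the conclusion by a genuinely different mechanism than the paper's. Both arguments hinge on the same core estimate --- a barrier of the form $(1+\abs{x}^2)^{-\beta}$ satisfies $\abs{\cA U(x,\zeta)}\le C\,U(x)$ uniformly in $\zeta$, with \hyperlink{A2}{(A2)} and \hyperlink{A4}{(A4)} controlling the local part and \cref{EA4B} being exactly what is needed for the nonlocal term --- and your verification of that estimate is sound. The difference is in how the estimate is converted into a bound on the hitting probability. The paper takes $f(x)=(1+\abs{x}^2)^{-\nicefrac{1}{2}}$, applies Gr\"onwall to get $\sup_{t\le T}\Exp_x^Z[f(X_t)]\le\kappa_2 f(x)$, then invokes Doob's $L^2$ maximal inequality on the martingale terms to control $\Exp_x^Z[\sup_{t\le T}f(X_t)]\le\kappa_4\sqrt{f(x)}$, and finally applies Markov's inequality to the running supremum; this costs a square root and yields a decay of order $(1+\abs{x}^2)^{-\nicefrac14}$. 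You instead turn $\E^{-Ct}U(X_{t\wedge\uuptau_r})$ into a bounded nonnegative supermartingale and apply optional sampling at the bounded stopping time $T\wedge\uuptau_r$, using that $\abs{X_{\uuptau_r}}\le r$ (which holds even when the exit from $\sB_r^c$ occurs by a jump into $\sB_r$) to lower-bound the stopped value on $\{\uuptau_r<T\}$. This is more elementary --- no maximal inequality is needed, since only the stopped value rather than the running supremum must be controlled --- and it gives the sharper bound $\Prob^Z_x(\uuptau_r<T)\le(1+r^2)\E^{CT}(1+\abs{x}^2)^{-1}$. The only cosmetic blemishes are the stray $\D$ in the integral $\int\nu(x,\zeta,\D z)\,(1+\abs{x+z}^2)^{-1}$ and the interchange of $\{\uuptau_r<T\}$ with $\{\uuptau_r\le T\}$, which is harmless since the former is contained in the latter.
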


\begin{proof}
With no loss of generality we assume that $r=1$.
Let $f(x)=(1+\abs{x}^2)^{-\nicefrac{1}{2}}$.
Applying It\^{o}'s formula to \cref{E1.1}, and using the definition
in \cref{EcA}, we see that
\begin{align}\label{PL5.1A}
\Exp^Z_x[f(X_t)] \,=\, f(x) + \Exp_x^Z
\left[\int_0^t \cA f(X_s, Z_s)\,\D{s}\right],\quad t\ge0\,.
\end{align}
Using the growth condition of $a$ and $b$ (see \cref{EA3,EA4A})
it is easily seen that 
\begin{equation*}
\babs{\trace\bigl(a(x)\grad^2 f(x)\bigr)} + \max_{\zeta\in\Act}\,
b(x, \zeta)\cdot \grad f(x)\,\le\, \kappa f(x)\qquad \forall\,
(x,\zeta)\in\Rd\times\Act\,,
\end{equation*}
for some constant $\kappa$.
On the other hand, \cref{EA4B} implies that
\begin{equation*}
\babs{I[f, x]}\,\le\, \kappa_1 f(x)\qquad \forall\, x\in\Rd\,,
\end{equation*}
for some constant $\kappa_1$,
Thus using Gr\"{o}nwall's inequality in \cref{PL5.1A},
it follows that
\begin{equation}\label{PL5.1B}
\sup_{Z\in\Uadm}\, \sup_{0\le t\le T}\Exp^Z_x[f(X_t)]\,\le\, \kappa_2 f(x)
\qquad \forall\, x\in\Rd\,,
\end{equation}
where the constant $\kappa_2$ depends on $T$ but not on $x$.
Again, using It\^{o}'s formula, we note that
\begin{equation}\label{PL5.1C}
\begin{aligned}
f(X_t) &\,=\, f(x) + \left[\int_0^t \trace\bigl(a(X_s)\grad^2 f(X_s)\bigr)
+ I[f,X_s, Z_s] + b(X_s, Z_s)\cdot \grad f(X_s)\,\D{s}\right]\\
&\mspace{50mu}+ \int_0^t \int_{\RR^m\setminus\{0\}}
\bigl(f(X_{s-}+ g(X_{s-}, \xi))-f(X_{s-})\bigr)
\bigl(\widetilde\cN(\D{t}, \D{\xi})-\Pi(\D{\xi})\,\D{t}\bigr)\\
&\mspace{150mu} + \int_0^t \grad f(X_s) \upsigma(X_s)\,\D{W_s}\,.
\end{aligned}
\end{equation}
By Doob's martingale inequality and \cref{PL5.1B}, we obtain
\begin{align*}
\Exp_x^Z\left[\sup_{t\in [0, T]}
\left|\int_0^t \grad f(X_s) \upsigma(X_s)\,\D{W_s}\right|\right]
&\,\le\,  \Exp_x^Z\left[\sup_{t\in [0, T]} \left|\int_0^t \grad f(X_s)
\upsigma(X_s)\,\D{W_s}\right|^2\right]^{\nicefrac{1}{2}}\\
&\,\le\, \sqrt{2} \Exp_x^Z\left[ \int_0^T |\grad f(X_s)|^2
|\upsigma(X_s)|^2\,\D{s} \right]^{\nicefrac{1}{2}} \\
&\,\le\, \kappa_3 \sqrt{f(x)}
\end{align*}
for some constant $\kappa_3$ depending only on $T$.
Similarly, we also get
\begin{align*}
\Exp^Z_x\left[ \sup_{t\in[0, T]}\,\biggl|\int_0^t \int_{\RR^m\setminus\{0\}}
\bigl(f\bigl(X_{s-}+ g(X_{s-},Z_s, \xi)\bigr)-f(X_{s-})\bigr)
\bigl(\widetilde\cN(\D{t}, \D{\xi})-\Pi(\D{\xi})\,\D{t}\bigr)\biggr|\right]
\,\le\, \kappa_3 \sqrt{f(x)}\,,
\end{align*}
 using the same constant $\kappa_3$, without loss of generality.
Using these estimates in \cref{PL5.1C} and applying Gr{o}nwall's inequality,
 we have
\begin{equation*}
\sup_{Z\in\Uadm}\, \Exp^Z_x\biggl[\sup_{0\le t\le T}\, f(X_t)\biggr]
\,\le\, \kappa_4 \sqrt{f(x)}\qquad \forall\, x\in\Rd\,,
\end{equation*}
for some constant $\kappa_4$. Thus
\begin{equation*}
\sup_{Z\in\Uadm}\, \Prob^Z_x(\uuptau_1< T) \,=\,
\sup_{Z\in\Uadm}\, \Prob^Z_x \biggl(\inf_{t\in [0, T]} \sqrt{1+\abs{X_t}^2}
< \sqrt{2}\biggr)
\,\le\, \sqrt{2} \kappa_4 \sqrt{f(x)}\,,
\end{equation*}
and the result follows by letting $\abs{x}\to\infty$.
\end{proof}

\begin{remark}
 Assumption \hyperlink{A4}{(A4)} is crucial for \cref{L5.1}.
Consider the case where $\nu(x,\cdot\,)=\delta_{-x}$, that is,
a Dirac mass at $-x$,
and for simplicity let $a$ be the identity matrix, and $b_\circ(x)=-x$.
Then \cref{EA4B} does not hold.
Let $f(x) \df \E^{-1} - \E^{-\abs{x}}$.
An easy calculation shows that $\cA f(x) = -1$ for $\abs{x}>1$.
This implies that $\Exp_x[\uuptau_1]\le \E^{-1}$,
and therefore, $\Prob_x(\uuptau_1< 2)\ge1-\frac{1}{2\E}$ for all $x\in \sB_1^c$,
thus violating the assertions of \cref{L5.1}.
\end{remark}

We next establish the existence of a principal eigenfunction on the whole space. 

\begin{lemma}\label{L5.2}
Suppose that \hyperlink{H}{\ttup{H}} holds.
Then there exists a bounded, positive solution
$\Phi_*\in \cC(\Rd)\cap\Sobl^{2,p}(\Rd)$
to the equation
\begin{equation}\label{EL5.2A}
\cIm \Phi_* \,=\, \varrho_*\, \Phi_*\quad \text{in\ } \Rd\,,
\end{equation}
where $\varrho_*=\lim_{n\to\infty}\varrho_n$.
In addition, $\lim_{\abs{x}\to\infty}\Phi_*(x)=0$, and there exists $r_\circ>0$ such that
for any measurable selector $v$ we have
\begin{equation}\label{EL5.2B}
\Phi_*(x) \,=\,
\Exp_x^v\left[\E^{\int_0^{\uuptau} (c_v(X_s)-\varrho_*)\,\D{s}}\,
\Phi_*(X_{\uuptau})\Ind_{\{\uuptau<\infty\}}\right],
\quad x\in \sB^c_{r_\circ}\,,
\end{equation}
with $\uuptau\equiv\uptau(\sB^c_{r_\circ})$.
\end{lemma}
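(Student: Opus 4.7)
The plan is to pass to the limit $n \to \infty$ in the Dirichlet eigenpairs $(w_n, \varrho_n)$ from \cref{E-max3}, using \hyperlink{H}{(H)} to localize the maximum of $w_n$ in a fixed ball and \cref{L5.1} to obtain decay at infinity. The main difficulty is securing uniform $\Sob^{2,p}$ bounds on $w_n$ despite the nonlocal coupling $\sJ_n(x) \df \int w_n(x+z)\,\nu(x,v_n,\D{z})$: unlike in the minimization setting of \cref{S-risk}, no Lyapunov envelope is available, so the near-monotone structure \hyperlink{H}{(H)} must be exploited to confine the maximum of $w_n$ to a compact set, after which renormalization makes $\sJ_n$ a uniformly bounded zeroth-order perturbation.

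First I establish a stochastic representation analogous to \cref{L4.1}. Applying \cref{L3.1} with a measurable selector $v_n$ of $\cIm w_n = \varrho_n w_n$ and using $w_n \equiv 0$ in $\sB^c_n$ yields
\begin{equation*}
w_n(x) \,=\, \Exp^{v_n}_x\!\left[\E^{\int_0^{\uuptau_r}(c_{v_n}(X_s) - \varrho_n)\,\D{s}}\, w_n(X_{\uuptau_r})\, \Ind_{\{\uuptau_r < \uptau_n\}}\right], \qquad x \in \sB_n \setminus \overline\sB_r.
\end{equation*}
By \hyperlink{H}{(H)}, I can choose $r_\circ > 0$ and $\delta > 0$ so that $c(x,\zeta) - \varrho_n \le -\delta$ for all $x \in \sB^c_{r_\circ}$, $\zeta \in \Act$, and all $n$ sufficiently large. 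Setting $r = r_\circ$, together with $X_{\uuptau_{r_\circ}} \in \overline\sB_{r_\circ}$ at the hitting time, shows that $w_n \le M_n$ on $\Rd$, where $M_n \df \max_{\overline\sB_{r_\circ}} w_n$, and hence $\max_{\overline\sB_n} w_n = M_n$.

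Renormalize $\tilde w_n \df w_n / M_n$: this satisfies $\tilde w_n \le 1$ globally, solves $\cIm \tilde w_n = \varrho_n \tilde w_n$, and attains its maximum $1$ at some $x_n \in \overline\sB_{r_\circ}$. The nonlocal integrand $\tilde \sJ_n(x) \df \int \tilde w_n(x+z)\,\nu(x, v_n, \D{z}) \le \Bar\nu$ is now uniformly bounded, so the rewritten equation $\Lg_{v_n} \tilde w_n = -\tilde\sJ_n + (\varrho_n - c_{v_n})\tilde w_n$ has uniformly bounded right-hand side, and standard interior $\Sob^{2,p}$ estimates yield $\sup_n \|\tilde w_n\|_{\Sob^{2,p}(\sB_R)} < \infty$ for each $R > 0$. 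Along a subsequence, $\tilde w_n \to \Phi_*$ weakly in $\Sobl^{2,p}(\Rd)$ and strongly in $\cC^{1,\alpha}_{\mathrm{loc}}(\Rd)$; dominated convergence (justified by $\tilde w_n \le 1$ and $\Bar\nu < \infty$) handles the nonlocal term in the limit, giving $\cIm \Phi_* = \varrho_* \Phi_*$ a.e.\ in $\Rd$. Extracting further, $x_n \to x_* \in \overline\sB_{r_\circ}$ with $\Phi_*(x_*) = 1$, so $\Phi_* \not\equiv 0$; since $\Phi_*$ is a nonnegative supersolution of a linear uniformly elliptic equation with bounded coefficients (for any selector $v$ of the max, drop the nonnegative $\tilde\sJ_v$ term from $\cA^c_v \Phi_* \le \varrho_* \Phi_*$), Harnack's inequality yields $\Phi_* > 0$ throughout $\Rd$, and boundedness is inherited from $\tilde w_n \le 1$.

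For decay at infinity, the uniform estimate $\tilde w_n(x) \le \Exp^{v_n}_x[\E^{-\delta \uuptau_{r_\circ}}]$ combined with $\E^{-\delta \uuptau_{r_\circ}} \le \E^{-\delta T} + \Ind_{\{\uuptau_{r_\circ} < T\}}$ and \cref{L5.1} yields, for any $\epsilon > 0$, a threshold $R$ such that $\tilde w_n(x) \le \epsilon$ uniformly in $n$ whenever $|x| \ge R$; passing to the limit gives $\Phi_*(x) \to 0$ as $|x| \to \infty$. Finally, for the representation \cref{EL5.2B}, apply \cref{L3.1} to a measurable selector $v$ of $\cIm \Phi_* = \varrho_* \Phi_*$ (so that $\cA^c_v \Phi_* = \varrho_* \Phi_*$) on $\sB_R \setminus \overline\sB_{r_\circ}$ with stopping time $\uuptau \wedge \uptau_R$, where $\uuptau \df \uptau(\sB^c_{r_\circ})$. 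On $\{\uptau_R < \uuptau\}$ we have $\Phi_*(X_{\uptau_R}) \le \sup_{\sB_R^c} \Phi_* \to 0$ as $R \to \infty$; on $\{\uuptau \le \uptau_R\}$ the path stays in $\sB^c_{r_\circ}$ before $\uuptau$, so the exponential factor is bounded by $1$. Sending $R \to \infty$ by dominated convergence, and noting that on $\{\uuptau = \infty\}$ the exponential decays to $0$ since $c_v - \varrho_* \le -\delta$ along the entire path, yields \cref{EL5.2B}.
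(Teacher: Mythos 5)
Your proposal is correct and follows essentially the same route as the paper: localize the maximum of $w_n$ in a fixed ball $\overline\sB_{r_\circ}$ via the stochastic representation of \cref{L4.1} and hypothesis \hyperlink{H}{(H)}, renormalize so that $\widetilde w_n\le 1$ (which makes the nonlocal term a uniformly bounded zeroth-order perturbation), pass to the limit as in \cref{L4.3}, obtain decay at infinity from \cref{L5.1}, and derive \cref{EL5.2B} by letting $R\to\infty$ in the It\^o identity on $\sB_R\setminus\overline\sB_{r_\circ}$. The only cosmetic difference is that you invoke dominated convergence where the paper uses monotone convergence in the last step; both are valid here.
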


\begin{proof}
Let $v_n$ be a measurable selector from $\cIm w_n = \varrho_n w_n$ in
\cref{E-max3}, that is, 
\begin{equation*}
\cA_{v_n} w_n(x)
+ c_{v_n}(x) w_n(x) \,=\, \varrho_n w_n(x)\quad \text{a.e.\ }x\in\sB_n\,.
\end{equation*}
Choose $\delta>0$ and $r_\circ>0$ satisfying
$\varrho_n-\max_{\zeta\in\Act} c(x,\zeta)\ge \delta$ for all $x\in \sB^c_{r_\circ}$,
and for all $n$ sufficiently large.
This is possible due to \hyperlink{H}{(H)}.
For the rest of the proof we set $\uuptau\equiv\uuptau_{r_\circ}$.
Using \cref{T2.1} and \cref{L4.1}, it follows that 
\begin{equation}\label{PL5.2A}
w_n(x) \,=\, \Exp^{v_n}_x\left[ \E^{\int_0^{\uuptau}
(c_{v_n}(X_s)-\varrho_n)\,\D{s}}\,
w_n(X_{\uuptau})\Ind_{\{\uuptau<\uptau_n\}}\right]
\quad \forall\,x\in \sB_n\setminus \overline\sB_{r_\circ}\,,\ \forall\,n> r_\circ\,.
\end{equation}
This of course, implies that $\sup w_n= \sup_{\sB_{r_\circ}} w_n$.
Let
\begin{equation*}
\widetilde{w}_n \,=\, \frac{1}{\sup_{\sB_{r_\circ}} w_n} w_n\,.
\end{equation*}
Thus $\widetilde{w}_n\le 1$, and it attains its maximum in the ball $\overline{\sB}_{r_\circ}$.
Thus we can apply the argument in \cref{L4.3} to extract a subsequence of
$\widetilde{w}_n$ that  converges to $\Phi_*$ in $\Sobl^{2,p}(\Rd)$, $p>d$, which satisfies
\begin{equation*}
\cIm \Phi_* \,=\, \varrho_*\, \Phi_*\quad \text{in\ }\Rd\,.
\end{equation*}
This establishes \cref{EL5.2A}.

From \cref{PL5.2A} we see that for any $x\in \sB^c_{r_\circ}$ we have
\begin{equation*}
\widetilde{w}_n (x) \,\le\,
\Exp^{v_n}_x\left[\E^{-\delta \uuptau}\, \Ind_{\{\uuptau<\uptau_n\}}\right].
\end{equation*}
Thus for any $T>0$ we have
\begin{equation*}
\widetilde{w}_n (x) \,\le\, \E^{-\delta T}  \sup_{Z\in\Uadm}\, \Prob^Z_x (\uuptau\ge T)
+  \sup_{Z\in\Uadm}\,\Prob^Z_x (\uuptau< T)\,.
\end{equation*}
Hence, by \cref{L5.1},
for any given $\varepsilon>0$, we can choose $T$ and $R$ large enough to satisfy
$\widetilde{w}_n(x) < \varepsilon$ for all $x\in \sB^c_R$.
This shows that $\lim_{\abs{x}\to\infty}\Phi_*(x)=0$.
To prove \cref{EL5.2B} we choose any $R>r_\circ$, and applying
It\^{o}'s formula, we obtain
\begin{equation}\label{PL5.2B}
\Phi_*(x) \,=\, \Exp_x^v\left[\E^{\int_0^{\uptau_R\wedge\uuptau}
(c_v(X_s)-\varrho_*)\,\D{s}}\,
 \Phi_*(X_{\uptau_R\wedge\uuptau})\right].
\end{equation}
Now we see that
\begin{align*}
\Exp_x^v\left[\E^{\int_0^{\uptau_R} (c_v(X_s)-\varrho_*)\,\D{s}}\,
\Phi_*(X_{\uptau_R})\Ind_{\{\uptau_R< \uuptau\}}\right]
\,\le\, \sup_{\abs{x}\geq R}\, \Phi_*(x) \,\xrightarrow[R\to\infty]{}\, 0\,.
\end{align*}
Thus \cref{EL5.2B} follows from \cref{PL5.2B} and
the monotone convergence theorem.
\end{proof}

In the next lemma, we show that  $\varrho_*$ is indeed the optimal value.

\begin{lemma}\label{L5.3}
Under the hypothesis of \cref{T5.1} we have that $\Hat\sE^*=\varrho_*$.
In addition, any measurable selector from \cref{ET5.1A} is an optimal
stationary Markov control.
\end{lemma}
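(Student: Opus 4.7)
The plan is to prove two inequalities: (A) $\sE_x(c,Z)\le\varrho_*$ for every $Z\in\Uadm$ and $x\in\Rd$, giving $\Hat\sE^*\le\varrho_*$; and (B) $\sE_x^v(c)\ge\varrho_*$ for every measurable selector $v$ from \cref{ET5.1A}. These together yield $\Hat\sE^*=\varrho_*$; since (A) applied to the stationary Markov control $v$ forces $\sE_x^v(c)\le\varrho_*$, combining with (B) gives $\sE_x^v(c)=\varrho_*=\Hat\sE^*$, establishing optimality of $v$.

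For (B), starting from $\cA^c_v\Phi_*=\varrho_*\Phi_*$ in $\Rd$, I apply \cref{L3.1} on $\sB_R$ with the stopping time $\uptau_R\wedge T$ and pass $R\to\infty$. The integrand on $\{\uptau_R\le T\}$ is dominated (for fixed $T$) by $\E^{(\sup c-\varrho_*)T}\sup_{\sB_R^c}\Phi_*$, which tends to $0$ because $\Phi_*$ vanishes at infinity by \cref{L5.2}. This yields the identity $\Phi_*(x)=\Exp_x^v\bigl[\E^{\int_0^T(c_v-\varrho_*)\,\D{s}}\Phi_*(X_T)\bigr]$. Using $\Phi_*(X_T)\le M\df\sup_{\Rd}\Phi_*<\infty$, one obtains $\Exp_x^v\bigl[\E^{\int_0^T c_v\,\D{s}}\bigr]\ge M^{-1}\Phi_*(x)\E^{\varrho_* T}$, whence $\sE_x^v(c)\ge\varrho_*$.

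For (A), the pointwise supersolution inequality $\cA^c_Z\Phi_*\le\cIm\Phi_*=\varrho_*\Phi_*$ together with the same limiting argument gives, for every $Z\in\Uadm$, $\Phi_*(x)\ge\Exp_x^Z\bigl[\E^{\int_0^T(c-\varrho_*)\,\D{s}}\Phi_*(X_T)\bigr]$. Using \hyperlink{H}{\ttup{H}}, fix $R>r_\circ$ so that $c\le\varrho_*-\delta$ on $\sB_R^c\times\Act$ for some $\delta>0$, and split $\Exp_x^Z\bigl[\E^{\int_0^T c\,\D{s}}\bigr]$ according to whether $X_T\in\sB_R$. On the event $\{X_T\in\sB_R\}$, the lower bound $\Phi_*(X_T)\ge\inf_{\sB_R}\Phi_*>0$ controls the corresponding contribution by $\Phi_*(x)(\inf_{\sB_R}\Phi_*)^{-1}\E^{\varrho_* T}$.

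The main obstacle is bounding the tail $\Exp_x^Z\bigl[\E^{\int_0^T c\,\D{s}}\Ind_{\{X_T\in\sB_R^c\}}\bigr]$. I would attack this by applying the same supersolution technique on the exterior domain, deriving as in \cref{L5.2} the bound $\Phi_*(x)\ge\Exp_x^Z\bigl[\E^{\int_0^\uuptau(c-\varrho_*)\,\D{s}}\Phi_*(X_\uuptau)\Ind_{\{\uuptau<\infty\}}\bigr]$ for $x\in\sB_{r_\circ}^c$, with $\uuptau\df\uptau(\sB_{r_\circ}^c)$. Since $c-\varrho_*\le-\delta$ throughout any excursion in $\sB_{r_\circ}^c$, this controls the contribution of such excursions. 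Combining with \cref{L5.1} (uniform short-time return bound) and the strong Markov property applied at successive entries and exits of $\sB_{r_\circ}$, the tail should be shown to be $o(\E^{\varrho_* T})$. Organizing this excursion-level analysis so that neither the number of re-entries nor the duration of any single excursion inflates the estimate is the principal technical difficulty.
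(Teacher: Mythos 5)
Your part (B) (the lower bound $\varrho_*\le\sE_x^v(c)$ for a measurable selector $v$) is correct and is essentially the paper's argument: Itô's formula applied to $\cA_v^c\Phi_*=\varrho_*\Phi_*$, removal of the boundary term using the decay of $\Phi_*$ at infinity, and the bound $\Phi_*\le M$. The problem is part (A). You correctly identify that the supersolution inequality $\Phi_*(x)\ge\Exp_x^Z\bigl[\E^{\int_0^T(c-\varrho_*)\,\D{s}}\,\Phi_*(X_T)\bigr]$ is useless on the event where $X_T$ is far from the origin, because $\Phi_*$ vanishes at infinity and hence has no positive lower bound. But your proposed remedy --- an excursion decomposition at successive entries and exits of $\sB_{r_\circ}$, stitched together with \cref{L5.1} and the strong Markov property --- is not carried out, and you yourself flag the combinatorial control of the number and duration of excursions as the ``principal technical difficulty.'' This is a genuine gap, not a routine verification: the controlled process under a general admissible $Z$ is not Markov, so the strong Markov property at the re-entry times must be replaced by careful conditioning on the filtration, and \cref{L5.1} only controls a single excursion over a \emph{fixed} horizon $T$, so it is not clear how to prevent the estimate from degrading as the number of cycles grows with $T$. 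As written, the upper bound $\Hat\sE^*\le\varrho_*$ is not proved.

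The paper's device, which you are missing, avoids the excursion analysis entirely. Take a smooth cut-off $\chi$ with $\chi=0$ on $\sB_{r_\circ}$ and $\chi=1$ on $\sB_{2r_\circ}^c$. Since $\chi$ attains its maximum on $\sB_{2r_\circ}^c$, one has $I[\chi,x,\zeta]\le0$ there, and by \hyperlink{H}{(H)} the quantity $\cIm\chi-\varrho_*\chi$ is strictly negative outside $\sB_{2r_\circ}$; on the compact set $\overline\sB_{2r_\circ}$ it is bounded while $\Phi_*$ is bounded below, so for any $\delta>0$ one can pick $\varepsilon>0$ with $\varepsilon(\cIm\chi-\varrho_*\chi)\le\delta\Phi_*$ everywhere. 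Then $\phi\df\Phi_*+\varepsilon\chi$ satisfies $\cIm\phi\le(\varrho_*+\delta)\phi$ \emph{and} $\inf_{\Rd}\phi>0$. Now the naive supersolution argument works for every admissible $Z$: $\phi(x)\ge\bigl(\inf_{\Rd}\phi\bigr)\Exp_x^Z\bigl[\E^{\int_0^T(c-\varrho_*-\delta)\,\D{s}}\bigr]$, whence $\sE_x(c,Z)\le\varrho_*+\delta$, and $\delta\searrow0$ gives $\Hat\sE^*\le\varrho_*$. The price of an arbitrarily small $\delta$ in the eigenvalue buys a uniform positive lower bound on the supersolution, which is exactly what your approach lacks.
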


\begin{proof}
Let $v$ be any measurable selector.
Then applying It\^{o}'s formula to \cref{EL5.2A}, and applying the dominated
convergence theorem, using also the fact that $\Phi_*\le1$ as
normalized in the proof of \cref{L5.2}, we obtain
\begin{equation*}
\Phi_*(x) \,=\,
\Exp^v_x\left[\E^{\int_0^T (c_v(X_s)-\varrho_*)\,\D{s}}\, \Phi_*(X_T)\right]
\,\le\, \Exp^v_x\left[\E^{\int_0^T (c_v(X_s)-\varrho_*)\,\D{s}} \right].
\end{equation*}
Thus, taking logarithms on both sides, dividing by $T$, and letting $T\to\infty$,
we have
\begin{equation}\label{PL5.3A}
\varrho_* \,\le\, \sE_x(c, v)\,\le\, \Hat\sE^*\qquad \forall\,x\in\Rd.
\end{equation}
To show the reverse inequality, let $\delta>0$ be given.
Consider a smooth nonnegative cut-off function $\chi$ satisfying
$\chi = 0$ in $\sB_{r_\circ}$, and $\chi = 1$ in $\sB^c_{2r_\circ}$,
with $r_\circ$ as in the proof of \cref{L5.2}.
Select $\varepsilon>0$ small enough so that
\begin{equation*}
\varepsilon\bigl (\cIm\chi - \varrho_*\chi\bigr) \,\le\, \delta \Phi_*
\quad\text{on\ }\Rd\,.
\end{equation*}
This is possible because $\chi$ equals its maximum in $\sB^c_{2r_\circ}$,
and thus $I[\chi, x,\zeta]\leq 0$ in $\sB^c_{2r_\circ}$ for $\zeta\in\Act$.
Therefore, $\phi\df\Phi_* + \varepsilon \chi$ satisfies
\begin{equation}\label{PL5.3C}
\cIm \phi(x) - (\varrho_*+\delta)\phi(x)
\,\le\, (\cIm-\varrho_*)\Phi_*(x) +\varepsilon\,(\cIm-\varrho_*)\chi(x)
- \delta\,\phi(x)\,\le\,0\quad\forall\,x\in\Rd\,.
\end{equation}
We have $\inf_{\Rd}\phi>0$ by definition.
Now we consider an admissible control $Z$ and apply It\^{o}'s formula to \cref{PL5.3C}
to obtain
\begin{align*}
\phi(x) \,\ge\, \Exp^Z_x\left[\E^{\int_0^T (c(X_s, Z_s)-\varrho_*-\delta)\,\D{s}}
\,\phi(X_T)\right]
\,\ge\,
\Bigl(\inf_{\Rd}\phi\Bigr)\,
\Exp^Z_x\left[\E^{\int_0^T (c(X_s, Z_s)-\varrho_*-\delta)\,\D{s}} \right].
\end{align*}
Take logarithms on both sides, divide by $T$, and let $T\to\infty$, to deduce that 
$\varrho_*+\delta \ge \sE_x(c, Z)$.
Since $Z$ and $\delta$ are  arbitrary, it follows that $\varrho_*\ge \Hat\sE^*$.
Thus the proof follows from \cref{PL5.3A}.
\end{proof}

We are ready to prove \cref{T5.1}.
\begin{proof}[Proof of \cref{T5.1}]
Part (a) follows from \cref{L5.2,L5.3}.
Existence of $\Phi_*$ follows from \cref{L5.2}.
To show uniqueness, consider a positive 
$u\in \Sobl^{2,p}(\Rd)$, $p>d$, satisfying
\begin{equation}\label{PT5.1A}
\cIm u \,=\, \Hat\sE^* V\quad \text{in\ }\Rd\,.
\end{equation}
Let $v$ be any measurable selector from \cref{ET5.1A}.
It follows from \cref{PT5.1A} that
\begin{equation}\label{PT5.1B}
\cA_v u(x) + c_v(x) u(x) \,\le\, \Hat\sE^* u(x)\,.
\end{equation}
An application of It\^{o}'s formula together with a Fatou's lemma gives us
\begin{equation}\label{PT5.1C}
u(x) \,\ge\, \Exp_x^v\left[\E^{\int_0^{\uuptau}
(c_v(X_s)-\Hat\sE^*)\,\D{s}}\, u(X_{\uuptau})
\Ind_{\{\uuptau<\infty\}}\right], \quad x\in \sB^c_{r_\circ}\,,
\end{equation}
where $r_\circ$ and $\uuptau=\uuptau_{r_\circ}$ are as in \cref{L5.2}.
Let 
\begin{equation*}
\kappa \,=\, \min_{\Bar{\sB}_{r_\circ}}\, \frac{u}{\Phi_*}\,.
\end{equation*}
Using \cref{EL5.2B,PT5.1C}, we deduce that $u\ge \kappa \Phi_*$ in $\Rd$,
 and that $u-\kappa\Phi_*$ equals $0$ at some point in $\Bar{\sB}_{r_\circ}$.
Let $f=u-\kappa\Phi_*$.
Using \cref{PT5.1B}, we obtain
\begin{equation*}
\cA_v f(x) - (c_v(x)-\Hat\sE^*)^- f(x)\,\le\, 0
\quad \text{in\ }\Rd\,.
\end{equation*}
It then follows by the strong maximum principle that $f=0$, or equivalently,
that $u=\kappa\Phi_*$.
This proves part (b).

We continue with part (c).
Optimality of any measurable selector of \cref{ET5.1B} follows from \cref{L5.3}.
Let $v$ be an optimal stationary Markov control, that is, 
$\sE_x(c, v)=\Hat\sE^*$ for all $x$.
Recall the linear operator $\cA^c_v$ defined in the beginning of \cref{S4.1}.
Let $\lambda_n(\cA^c_v)$ denote the Dirichlet eigenvalue
on $\sB_n$, and $\Hat\lambda$ its limit as $n\to\infty$.
If  $\Hat\lambda(\cA^c_v)>\underline{C}$, with $\underline{C}$ as defined
in \hyperlink{H}{(H)}, then using the arguments in the proof of \cref{L5.2},
there exists a bounded, positive function $\Phi_v\in\Sobl^{2,p}$, for any $p>1$,
satisfying $\cA^c_v\Phi_v = \Hat\lambda(\cA^c_v)\Phi_v$.
In addition the proof of \cref{L5.3} shows that
$\Hat\lambda(\cA^c_v)= \sE_x(c, v)=\Hat\sE^*$.
Furthermore, the stochastic representation \cref{EL5.2B}
also holds for $\Phi_v$. Thus we can apply the argument used in the proof
of part (b) to conclude that $\Phi_v=\kappa\Phi_*$
for some positive constant $\kappa$. Thus $v$
must satisfy \cref{ET5.1B}.

It remains to show that $\Hat\lambda(\cA^c_v)>\underline{C}$ for any optimal control $v$.
Assume the contrary that is, $\Hat\lambda(\cA^c_v)\leq\underline{C}$.
Let $f_n(t)\df \lambda_n(\cA^c_v+t\Ind_\sB)$, with $\sB$
the unit ball in $\Rd$.
Each function $f_n$ is convex and increasing by \cref{T2.3},
and the sequence $\{f_n\}$ is monotone (\cref{C2.1}) and pointwise bounded.
Thus, by convexity, $\{f_n\}$ is Lipschitz equicontinuous on any compact interval.
It follows that the limit $\Hat\lambda(\cA^c_v+t\Ind_\sB)$ is continuous in $t$.
It is also clear that the range of $t\mapsto \Hat\lambda(\cA^c_v+t\Ind_\sB)$,
includes $\bigl[\Hat\lambda(\cA^c_v),\infty)$, since
$\lambda_1(\cA^c_v+t\Ind_\sB)= t+ \lambda_1(\cA^c_v)$.
Thus there exists $t_\circ>0$ such that
$\underline{C} < \Hat\lambda(\cA^c_v+t_\circ\Ind_\sB)<\Hat\sE^*$.
We use \cref{L5.2} to construct a bounded positive solution $u$
to
\begin{equation*}
(\cA^c_v+t_\circ\Ind_\sB) u \,=\, 
\Hat\lambda(\cA^c_v+t_\circ\Ind_\sB)\, u\quad \text{in\ } \Rd\,,
\end{equation*}
and employ the argument in the proof of \cref{L5.3} to show
that
\begin{equation*}
\sE_x(c,v)\,\le\, \sE_x\bigl(c + t_\circ\Ind_\sB,v\bigr)  \,=\,
\Hat\lambda(\cA^c_v+t_\circ\Ind_\sB) \,<\, \Hat\sE^*\,.
\end{equation*}
Thus $v$ cannot be optimal, and we reach a contradiction.
This completes the proof.
\end{proof}

\section{Proofs of \texorpdfstring{\cref{T2.1,T2.2,T2.3}}{}}\label{S-proofs}
This section is devoted to the proofs of \cref{T2.1,T2.2,T2.3}.
We start with a few auxiliary results which are needed in the proofs.


We begin with the Aleksandrov--Bakelman--Pucci (ABP) estimate for $\cI$.
See also \cite{MS} for more general estimates, and \cite{GM2002}
for results on elliptic integro-differential operators with regular kernels. 

\begin{theorem}\label{T6.1}
Suppose that $u\in\Sobl^{2,p}(D)\cap\cC_b(\Rd)$, $p>d$, $c\leq 0$, and a constant
$M>0$ satisfy
\begin{equation*}
\trace (a\grad^2 u)(x)+ \sup_{\zeta\in\Act} I[u,x,\zeta]+ M\abs{\grad u(x)} + c(x) u
\,\ge\, f(x)
\quad \text{in\ } \{u>0\}\cap D\,, \quad\text{with\ } u\le 0 \text{\ in\ } D^c\,.
\end{equation*}
Then for some constant $B$, which depends on  $M$,
$\diam D$, $\nu$, and $\upkappa$ in \cref{A2.1}, we have
\begin{equation*}
\sup_{D} u^+ \,\le\,  B \norm{f^-}_{L^d(D)}\,.
\end{equation*}
\end{theorem}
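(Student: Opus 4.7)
The strategy is to adapt the classical Aleksandrov--Bakelman--Pucci argument by treating $\sup_{\zeta}I[u,\cdot,\zeta]$ as an additional source term. The key first observation is that the boundary condition $u \le 0$ in $D^c$ forces $u^+ \le K \df \sup_D u^+$ throughout $\Rd$, so for every $x \in \{u>0\}\cap D$,
\begin{equation*}
\sup_{\zeta\in\Act}\,I[u,x,\zeta] \,\le\, \sup_{\zeta\in\Act}\,\int_{\Rd} u^+(x+z)\,\nu(x,\zeta,\D{z}) \,\le\, C_0\,K\,,
\end{equation*}
where $C_0 \df \sup_{(x,\zeta)\in D\times\Act}\nu(x,\zeta,\Rd)$ is finite by \cref{A2.1}\,(2b). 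Substituting and using $c \le 0$, the hypothesis reduces to the purely local sub-inequality
\begin{equation*}
\trace(a\grad^2 u) + M\abs{\grad u} + c u \,\ge\, f - C_0\,K \quad\text{in\ } \{u>0\}\cap D\,,
\end{equation*}
together with $u \le 0$ on $\partial D$.

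Applying the classical ABP estimate \cite[Thm.~9.1]{GilTru} to this local inequality, and tracking constants, yields
\begin{equation*}
K \,\le\, B_0\bnorm{(f - C_0 K)^-}_{L^d(D)} \,\le\, B_0\bnorm{f^-}_{L^d(D)} + B_0\,C_0\,\abs{D}^{1/d}\,K\,,
\end{equation*}
for some $B_0 = B_0(M,\diam D,\upkappa)$. When $\diam D$ is small enough that $B_0 C_0 \abs{D}^{1/d} < 1$, absorption of the $K$-term on the right closes the estimate immediately, producing a bound of the required form.

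The main obstacle is arbitrary diameter, for which this naive absorption fails. My plan is to revisit the normal-map/area-formula proof of classical ABP and fold in the non-local contribution directly on the upper contact set $\Gamma^+$ of $u$, where $u$ lies below its tangent hyperplane. There, the inequality $u(x+z) \le u(x) + \grad u(x)\cdot z$ combined with the crude bound $u(x+z) \le K$ for large $\abs{z}$ yields, for any $r>0$,
\begin{equation*}
I[u,x,\zeta] \,\le\, \abs{\grad u(x)}\int_{\abs{z}\le r}\!\abs{z}\,\nu(x,\zeta,\D{z}) + K\,\nu\bigl(x,\zeta,\{\abs{z}>r\}\bigr)\,,
\end{equation*}
the first term being absorbed into the $M\abs{\grad u}$ piece of the equation, while on $\Gamma^+$ one has the classical control $\abs{\grad u} \le K/\diam D$. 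Choosing $r$ suitably in terms of the local bound on $\nu(\cdot,\cdot,\Rd)$ and executing the area-formula chain $K^d \lesssim \int_{\Gamma^+}\abs{\det \grad^2 u}\,\D{x}$ together with the ellipticity bound $\abs{\det \grad^2 u} \le \bigl(-\tfrac{1}{d}\trace(a\grad^2 u)/\upkappa\bigr)^d$ on $\Gamma^+$, and finally bounding $-\trace(a\grad^2 u)$ by the RHS of the reduced inequality, yields the desired estimate $K \le B\bnorm{f^-}_{L^d(D)}$ with $B$ depending only on $M$, $\diam D$, $\nu$, and $\upkappa$.
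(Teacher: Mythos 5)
Your overall skeleton --- pass to the upper contact set $\Gamma^+$, use the contact-plane inequality to tame the nonlocal term, and then invoke a classical ABP estimate with a first-order term --- is the same as the paper's. But the way you split the nonlocal integral leaves a genuine gap. Cutting at a radius $r$ produces the tail term $K\,\nu\bigl(x,\zeta,\{\abs{z}>r\}\bigr)$, and you propose to absorb it by choosing $r$ large. Under \cref{A2.1} only the total mass $\sup_{\zeta}\nu(x,\zeta,\Rd)$ is locally bounded; no uniform tightness of the family $\{\nu(x,\zeta,\cdot)\}$ is assumed, so $\sup_{x\in D,\,\zeta\in\Act}\nu\bigl(x,\zeta,\{\abs{z}>r\}\bigr)$ need not tend to $0$ as $r\to\infty$ (consider $\nu(x,\zeta,\cdot)=\delta_{z_0(x)}$ with $z_0$ unbounded on $D$). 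Even when it does, you must beat the growth in $r$ of the absorbed drift constant $\int_{\abs{z}\le r}\abs{z}\,\nu(x,\zeta,\D{z})$, which you do not address; and the absorption step reproduces exactly the $B_0C_0\abs{D}^{\nicefrac{1}{d}}K$ obstruction you yourself identified in your first paragraph.

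The correct split --- and the one the paper uses --- is by position, not by jump size: decompose $I[u,x,\zeta]$ into the contributions from $\{z\colon x+z\in D\}$ and $\{z\colon x+z\in D^c\}$. On $\Gamma^+$ one has $u(x)\ge 0$, so the $D^c$ part is nonpositive outright because $u\le 0$ in $D^c$; no tail term survives. For the $D$ part one automatically has $\abs{z}\le\diam D$, so the contact inequality bounds it by $\abs{\grad u(x)}\int_{\{x+z\in D\}}\abs{z}\,\nu(x,\zeta,\D{z})$, which is folded into the first-order coefficient $M_1=M+\sup_{\zeta}\int_{\{x+z\in D\}}\abs{z}\,\nu(x,\zeta,\D{z})$. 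The paper then simply cites the measurable-ingredients ABP estimate of \cite{CCKS} for the reduced local inequality. Two further soft spots in your sketch: the pointwise bound $\abs{\grad u}\le K/\diam D$ does not hold on all of $\Gamma^+$ (the correct statement is that the normal map of $\Gamma^+$ covers $\sB_{K/\diam D}$, i.e., one restricts to the sub-contact set of small slopes), and the final area-formula chain with a gradient term is not a naive absorption but requires the weighted normal map as in \cite[Theorem~9.1]{GilTru}; both are standard and are better handled by citing the drift version of ABP than by re-deriving it.
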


\begin{proof}
We write
\begin{equation*}
\trace (a\grad^2 u)(x) + 
\sup_{\zeta\in\Act}\int_{z\colon x+z \in D}\langle\grad u(x), z\rangle\,\nu(x,\zeta, \D{z})
+ M\abs{\grad u(x)}\,\ge\, f(x) -g(x)\quad \text{in\ } \{u>0\}\,,
\end{equation*}
where
\begin{multline*}
g(x) \,\df\, \sup_{\zeta\in\Act}\biggl[\int_{z\colon x+z \in D} (u(x+z)-u(x)- \grad u(x)\cdot z) 
\nu(x,\zeta,\D{z})\\
+ \int_{z\colon x+z \in D^c}(u(x+z)-u(x))\nu(x,\zeta,\D{z})\biggr]\,.
\end{multline*}
Letting $M_1= M+ \sup_{\zeta\in\Act} \int_{z\colon x+z \in D} |z| \nu(x,\zeta, \D{z})$,
 we then obtain
$$
\trace (a\grad^2 u)(x) 
+ M_1\abs{\grad u(x)}\,\ge\, f(x) -g(x)\quad \text{in\ } \{u>0\}\,.
$$
Applying \cite[Proposition~3.3]{CCKS} we obtain  
\begin{equation*}
\sup_{D}\,u^+ \,\le\, \sup_{\partial D} u^+ + B \norm{(f-g)^-}_{L^d(\Gamma^+)}
\end{equation*}
for some constant $B$,
where $\Gamma^+$ denotes the upper contact set of $u^+$ in $D$, that is, 
\begin{equation*}
\Gamma^+ \,=\, \bigl\{x \in D\,\colon\, \exists \, p\in\Rd \text{\ such that\ }
u^+(y)\le u^+(x) + p\cdot (y-x) \text{\ for\ } y\in D\bigr\}\,.
\end{equation*}
Note that for every $x\in \Gamma^+$ we have $u(x)\ge 0$ and 
\begin{equation*}
u(x+z)-u(x)- \grad u(x)\cdot z \,\le\, 0 \quad \text{for\ } x+z\in D\,.
\end{equation*}
Thus we get $g\le 0$ on $\Gamma^+$. Hence $(f-g)^-\le f^-$, and the result follows.
\end{proof}

As a consequence of \cref{T6.1} we have a narrow domain maximum principle.

\begin{theorem}\label{T6.2}
There exists $\varepsilon>0$ such that whenever $Q\subset D$, $\abs{Q}\le \varepsilon$,
and $w\in\Sobl^{2,p}(D)\cap\cC_b(\Rd)$, $p>d$, satisfy 
\begin{equation*}
\cI w \,\ge\, 0 \quad \text{in\ } D\,, \quad w\le 0 \text{\ in\ } Q^c\,,
\end{equation*}
then $w\le 0$ in $D$.
The same applies for the operator $\sA^c$.
\end{theorem}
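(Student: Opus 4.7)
The plan is to reduce the assertion to an application of the ABP estimate in \cref{T6.1}, moving the zeroth-order term $c(x,\zeta)w$ to the right-hand side as a forcing term and then absorbing it via the smallness of $\abs{Q}$.

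On the set $\{w>0\}\cap D$, from $\cI w \ge 0$ we have, for every $\zeta \in \Act$,
\[
\trace\bigl(a\grad^2 w\bigr) + I[w,x,\zeta] + b(x,\zeta)\cdot\grad w + c(x,\zeta)\,w \,\ge\, 0.
\]
Using $I[w,x,\zeta]\le \sup_{\zeta'\in\Act} I[w,x,\zeta']$, the boundedness of $b$ and $c$ from \cref{A2.1}, and the positivity of $w$ on this set, we obtain
\[
\trace\bigl(a\grad^2 w\bigr) + \sup_{\zeta\in\Act} I[w,x,\zeta] + M\abs{\grad w} \,\ge\, -N\,w
\quad\text{in } \{w>0\}\cap D,
\]
where $M$ and $N$ denote sup norms of $\abs{b}$ and $\abs{c}$ over $D\times\Act$. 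Since $D^c\subset Q^c$, the hypothesis $w\le 0$ on $Q^c$ ensures $w\le 0$ on $D^c$, so \cref{T6.1} applies with zeroth-order coefficient identically zero and forcing $f(x)=-N\,w(x)$, yielding
\[
\sup_D w^+ \,\le\, B\,N\,\norm{w^+}_{L^d(D)}.
\]

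Since $w^+$ is supported in $Q$, H\"older's inequality gives $\norm{w^+}_{L^d(D)}\le \abs{Q}^{\nicefrac{1}{d}}\sup_D w^+$, and therefore
\[
\sup_D w^+ \,\le\, B\,N\,\abs{Q}^{\nicefrac{1}{d}}\,\sup_D w^+.
\]
Choosing $\varepsilon$ small enough that $B N\,\varepsilon^{\nicefrac{1}{d}}<1$ forces $\sup_D w^+=0$, i.e., $w\le 0$ throughout $D$. For the operator $\sA^c$ the argument is identical, with $\sup_{\zeta}I[w,x,\zeta]$ replaced by the single nonlocal term $I[w,x]$ from \cref{EsA}.

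The only real obstacle is that \cref{T6.1} requires the zeroth-order coefficient to be nonpositive, whereas the coefficient $c$ in the operator may change sign; the fix is to push the entire $c(x,\zeta)w$ contribution to the right-hand side as a forcing term proportional to $w^+$, which is then absorbable because $w^+$ is confined to the narrow set $Q$. The remaining steps — dominating the $\zeta$-dependent first-order term by $M\abs{\grad w}$, replacing $I[w,x,\zeta]$ by the upper envelope $\sup_{\zeta'} I[w,x,\zeta']$, and invoking H\"older on $Q$ — are routine.
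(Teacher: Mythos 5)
Your proof is correct and follows essentially the same route as the paper's: both move the zeroth-order term $c(x,\zeta)w$ to the right-hand side as a forcing term of size $\norm{c}_\infty w^+$, apply the ABP estimate of \cref{T6.1} with nonpositive (here zero) zeroth-order coefficient, and absorb via H\"older using that $w^+$ is supported in the small set $Q$. You merely spell out the absorption step that the paper leaves implicit, and your choice $f=-\norm{c}_\infty w$ (rather than $-\norm{c}_\infty\abs{w}$) is, if anything, slightly cleaner since $f^-$ is then supported in $Q$.
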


\begin{proof}
Let $M$ be such that $\sup_{D\times\Act} \abs{b(x,\zeta)}\le M$.
Let $f=-\norm{c}_\infty\abs{w}$.
Note that on $\{w>0\}$ we have $f^-= \norm{c}_\infty w^+$. 
Then the result follows from \cref{T6.1}.
\end{proof}

Now we are ready to state the existence result.

\begin{theorem}\label{T6.3}
Suppose that $c\le 0$, and $D$ be a bounded $\cC^{1,1}$ domain in $\Rd$.
Then for any $f\in \cC(\Bar D)$, there exists a unique solution 
$u\in\cC_0(D)\cap\Sob^{2,p}(D)$ satisfying
\begin{equation*}
\cI u \,=\, f 
\quad(\text{or\ } \sA^c u\,=\,f)\quad \text{in\ } D\,, \quad u=0\quad \text{in\ } D^c\,.
\end{equation*}
\end{theorem}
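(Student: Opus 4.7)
The strategy combines uniqueness via the ABP estimate \cref{T6.1} (with a measurable selection) and existence via a Leray--Schauder continuation argument, in which the nonlocal operator is treated as a fixed perturbation of a local Bellman--Dirichlet problem. For uniqueness, given two solutions $u_1,u_2$, pick (Filippov's theorem, \cite[Theorem 18.17]{ali-bor}) a Borel selector $v\colon D\to\Act$ attaining the infimum in the definition of $\cI u_2$, so that $\cA^c_v u_2=\cI u_2=f$ a.e.\ in $D$. Since $\cA^c_v u_1\ge\cI u_1=f$ pointwise, the difference $w\df u_1-u_2$ satisfies $\cA^c_v w\ge 0$ in $D$ with $w=0$ in $D^c$. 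Bounding $b_v\cdot\grad w\le M\abs{\grad w}$ with $M\df\sup_{D\times\Act}\abs{b}$, using $I_v[w,x]\le\sup_\zeta I[w,x,\zeta]$, and noting $c_v\le 0$, \cref{T6.1} applied with right-hand side $0$ yields $\sup_D w^+=0$. Swapping the roles of $u_1$ and $u_2$ gives $u_1=u_2$; for $\sA^c$ the same argument applies without any selection.

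For existence, fix $w\in X\df\cC_0(D)$ (extended by $0$ to $\Rd$) and $\sigma\in[0,1]$, and define $T_\sigma w\df u$ to be the solution in $\Sob^{2,p}(D)\cap\cC_0(D)$ of the local Bellman--Dirichlet problem
\begin{equation*}
\trace(a\grad^2 u)+\inf_{\zeta\in\Act}\bigl\{b(x,\zeta)\cdot\grad u+c(x,\zeta)u+\sigma I[w,x,\zeta]\bigr\}\,=\,\sigma f\text{\ \ in\ }D\,,\qquad u|_{\partial D}=0\,.
\end{equation*}
Under our standing hypotheses ($a$ uniformly elliptic and continuous on $\overline D$, $b$ and $c$ continuous and bounded on $\overline D\times\Act$ with $c\le 0$, $\partial D\in\cC^{1,1}$, and $(x,\zeta)\mapsto\sigma I[w,x,\zeta]$ bounded by the local boundedness of $\sup_\zeta\nu(x,\zeta,\Rd)$), this concave HJB--Dirichlet problem is solvable in $\Sob^{2,p}(D)$, $p>d$, by standard nonlinear elliptic theory; for $\sA^c$ the infimum disappears and this reduces to the classical linear Dirichlet problem of \cite{GilTru}. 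The map $T_\sigma$ is continuous in $(w,\sigma)\in X\times[0,1]$ by stability of the PDE under $\cC$-perturbations of the inhomogeneity, and compact because $T_\sigma$-images of bounded sets are bounded in $\Sob^{2,p}(D)$, which embeds compactly in $X$ when $p>d$; moreover $T_0\equiv 0$.

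To invoke Leray--Schauder it remains to bound all fixed points $u=T_\sigma u$ uniformly in $\sigma\in[0,1]$. Any such $u$ satisfies $\cI^\sigma u=\sigma f$ in $D$, where $\cI^\sigma$ is obtained from $\cI$ in \cref{E-cI} by replacing $\nu$ with $\sigma\nu$; hence on $\{u>0\}\cap D$,
\begin{equation*}
\trace(a\grad^2 u)+\sup_{\zeta\in\Act}\bigl\{b(x,\zeta)\cdot\grad u+c(x,\zeta)u+\sigma I[u,x,\zeta]\bigr\}\,\ge\,\sigma f\,.
\end{equation*}
The constant $B$ in \cref{T6.1} depends on $\nu$ only through an upper bound for $\sup_\zeta\int\abs{z}\,\nu(x,\zeta,\D{z})$, so it is uniform in $\sigma\in[0,1]$; this gives $\sup_D u^+\le B\norm{f^-}_{L^d(D)}$, and the symmetric application to $-u$ bounds $u^-$. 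The Leray--Schauder theorem now furnishes a fixed point of $T_1$, which is the desired solution of $\cI u=f$. The principal obstacle is the $\Sob^{2,p}$-solvability of the local concave Bellman--Dirichlet problem defining $T_\sigma$: this is handled by the classical linear theory of \cite{GilTru} for $\sA^c$, but for $\cI$ it is the one nonelementary ingredient, provided by standard results on Hamilton--Jacobi--Bellman equations with linear second-order part (e.g., via policy iteration, using the monotonicity conferred by $c\le 0$, which only requires linear Dirichlet solvability at each step).
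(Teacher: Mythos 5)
Your proposal is correct and follows essentially the same route as the paper: freeze the nonlocal term, solve the resulting local concave HJB--Dirichlet problem in $\Sob^{2,p}(D)$ (the paper invokes \cite[Theorem~4.6]{W09} for this step), and apply Leray--Schauder; your homotopy $u=T_\sigma u$ is in fact the same fixed-point equation $\cI^\sigma u=\sigma f$ as the paper's $u=\mu\cT u$. The only minor differences are that you obtain the a priori bound on fixed points by applying \cref{T6.1} directly with a $\sigma$-uniform constant, whereas the paper argues by contradiction (normalizing a hypothetically unbounded sequence of fixed points and producing a nontrivial solution of the homogeneous equation, which \cref{T6.1} forbids), and that your uniqueness argument via a measurable selector is a standard fleshing-out of the paper's one-line appeal to the ABP estimate.
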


\begin{proof}
For $u\in \cC_0(D)$, and define 
\begin{equation*}
\cG[M,p,r, x] \,\df\,\trace (a M)
+ \inf_{\zeta\in\Act}\,\bigl\{I[u, x, \zeta] + b(x,\zeta)\cdot  p + c(x,\zeta) r\bigr\},
\end{equation*}
for $x\in D$ and $(M, p, r)\in\mathbb{S}^d\times\Rd\times\RR$. Then note that
$G[M, p, r, x]-G[0, 0, 0, x]$ satisfies the conditions of \cite{W09}.
Thus, by \cite[Theorem~4.6]{W09}, there exists a unique solution
$v\in \cC_0(D)\cap\Sob^{2,p}(D)$, $p>d$, to 
\begin{equation*}
G[\grad^2v, Dv, v, x]\,=\, f(x)\,,
\end{equation*}
satisfying
\begin{equation}\label{PT6.3B}
\norm{v}_{\Sob^{2,p}(D)}
\,\le\, \kappa\Bigl(\norm{v}_\infty + \bnorm{f-\cG[0,0,0,\cdot\,]}_{L^p(D)}\Bigr)
\end{equation}
for some constant $\kappa=\kappa(p, D)$ which does not depend on $u$, $v$, or $f$.
Using the Aleksandrov--Bakelman--Pucci (ABP) estimate (see for example,
\cite{CCKS}, \cite[Theorem~3.1]{QS08}) we deduce that 
\begin{equation*}
\norm{v}_\infty \,\le\, \kappa_1 \bnorm{f-\cG[0,0,0,\cdot\,]}_{L^d(D)}
\end{equation*}
for some constant $\kappa_1$ which depends on $a$, $D$, and a bound of $b$.
Thus by \cref{PT6.3B} we obtain
\begin{equation}\label{PT6.3C}
\norm{v}_{\Sob^{2,p}(D)} \,\le\, \kappa_2 \bnorm{f-\cG[0,0,0,\cdot\,]}_{L^p(D)}
\end{equation}
for some constant $\kappa_2$.
Let $\cT u =v$ denote the operator mapping $u\in \cC_0(D)$ to this solution.
Since the embedding $\Sob^{2,p}(D)\hookrightarrow\cC^{0,\alpha}(D)$
is compact for $p>d$ and $\alpha\in(0,1-\nicefrac{d}{p})$,
it follows from \cref{PT6.3C} that $\cT$ is a compact operator.
From the same estimate
it is also easy to see that $u\mapsto \cT u$ is continuous in $\cC_0(D)$.
We claim that the set
\begin{equation*}
\bigl\{u\in \cC_0(D)\,\colon\, u=\mu \cT u \text{\ for some\ } \mu\in [0, 1]\bigr\}
\end{equation*}
is bounded in $\cC_0(D)$.
To prove the claim, we argue by contradiction.
If not, there must exists a sequence $(u_n, \mu_n)$ with $\norm{u_n}_\infty\to \infty$
and $\mu_n\to\mu\in[0,1]$ as $n\to\infty$.
Using \cref{PT6.3C}, scaling the solution so that $\norm{u_n}_\infty=1$,
and extracting a subsequence of
$\{u_n\}$, we obtain a nontrivial nonzero solution $w\in \cC_0(D)$ of 
\begin{equation*}
\trace (a \grad^2 w)
+ \inf_{\zeta\in\Act}\,\bigl\{\mu I[w, x, \zeta] + b(x,\zeta)\cdot \grad w
+  c(x,\zeta) w\bigr\}=0
\end{equation*}
for some $\mu\in [0,1]$.
But this contradicts the ABP maximum principle in \cref{T6.1},
thus proving the claim.
Therefore, by the Leray--Schauder fixed point theorem, there exists a fixed point
$u\in\cC_0(D)\cap\Sob^{2,p}(D)$ of $\cT$.
This proves the existence of a solution.
Uniqueness follows from the ABP estimate (\cref{T6.1}).
This completes the proof.
\end{proof}

Let us also recall the version of the nonlinear Krein--Rutman theorem
in \cite[Theorem~1]{A18}.

\begin{theorem}\label{T6.4}
Let $\sP$ be a nonempty cone in an ordered Banach space $\cX$.
Suppose that $\cT\colon\cX\to\cX$ is order-preserving, $1$-homogeneous,
completely continuous map such that
for some nonzero $u$, and $M>0$ we have $u\preceq M \cT u$.
Then there exists $\lambda>0$ and $x\ne 0$ in $\sP$ such that $\cT x=\lambda x$.
\end{theorem}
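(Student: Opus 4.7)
The plan is to establish the existence of an eigenpair $(\lambda, x)$ via a perturbation and limiting argument combined with Schauder's fixed-point theorem, which is the classical strategy for nonlinear Krein--Rutman-type results in ordered Banach spaces.

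First I would reduce to the case where the element $u$ in the hypothesis lies in $\sP$: since $\cT$ is order-preserving and $u \preceq M\cT u$, the image $\cT u$ belongs to $\sP$ and inherits an analogous lower bound, so one may replace $u$ by a nonzero element of $\sP$. For each $\epsilon > 0$, introduce the perturbed operator
\begin{equation*}
\cT_\epsilon(x) \,\df\, \cT(x) + \epsilon\,\norm{x}\,u\,,
\end{equation*}
which is again positively $1$-homogeneous, order-preserving, and completely continuous, and which satisfies $\cT_\epsilon(x) \succeq \epsilon\norm{x}u$ for every $x\in\sP$. In particular $\cT_\epsilon$ does not vanish on $\sP\setminus\{0\}$, provided that the cone is normal, so that the norm is monotone along the ordering; this will be part of the standing framework.

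Next I would produce an eigenpair $(\lambda_\epsilon, x_\epsilon)$ of $\cT_\epsilon$. The natural candidate is a fixed point of the normalized map
\begin{equation*}
F_\epsilon(x) \,\df\, \frac{\cT_\epsilon(x)}{\norm{\cT_\epsilon(x)}}
\end{equation*}
on the sphere $\Sigma \df \{x\in\sP : \norm{x}=1\}$. Since $\cT_\epsilon$ is completely continuous and bounded away from zero on $\Sigma$, the map $F_\epsilon$ is continuous and its image is relatively compact in $\Sigma$. Applying Schauder's theorem on the convex, compact set $\overline{\conv}\,F_\epsilon(\Sigma)\cap\sP$, which $F_\epsilon$ sends into itself after a standard retraction argument, yields $x_\epsilon \in \Sigma$ with $\cT_\epsilon(x_\epsilon) = \lambda_\epsilon x_\epsilon$, where $\lambda_\epsilon \df \norm{\cT_\epsilon(x_\epsilon)} > 0$.

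The heart of the argument is a two-sided uniform bound $0 < c \le \lambda_\epsilon \le C < \infty$. The upper bound is immediate from $\lambda_\epsilon \le \sup_{\norm{x}=1} \norm{\cT x} + \epsilon\norm{u}$, which is finite by complete continuity of $\cT$. The main obstacle is the lower bound. For this I would appeal to the variational inequality
\begin{equation*}
\lambda_\epsilon \,\ge\, \sup\,\bigl\{\mu > 0 \,\colon\, \exists\, y \in \sP\setminus\{0\}
\text{\ with\ } \mu y \preceq \cT_\epsilon(y) \bigr\}\,,
\end{equation*}
which is obtained by iterating the comparison $\mu y \preceq \cT_\epsilon(y)$, exploiting order-preservation and $1$-homogeneity to derive $\mu^n\norm{y} \le \kappa\,\lambda_\epsilon^n$ for a constant depending on the cone normality. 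Taking $y = u$ and $\mu = 1/M$ in the hypothesis yields $\lambda_\epsilon \ge 1/M$. Finally, pass to the limit $\epsilon\downarrow0$: by complete continuity of $\cT$ and the bound $\norm{x_\epsilon}=1$, a subsequence of $\cT(x_\epsilon)$ converges, whence $x_\epsilon = \lambda_\epsilon^{-1}(\cT(x_\epsilon)+\epsilon u)$ converges to some $x\in\sP$ with $\norm{x}=1$, and $\lambda_\epsilon$ converges (along a further subsequence) to $\lambda \in [1/M,C]$. Passing to the limit in $\cT_\epsilon(x_\epsilon) = \lambda_\epsilon x_\epsilon$ gives $\cT x = \lambda x$ with $x\ne 0$ and $\lambda > 0$, completing the proof.
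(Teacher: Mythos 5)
First, a point of comparison: the paper does not prove this statement at all — it is quoted from \cite{A18}*{Theorem 1} — so there is no in-paper argument to measure yours against. Your overall strategy (perturb, extract an eigenpair of the perturbed map by a fixed-point theorem, bound $\lambda_\epsilon$ above and below uniformly in $\epsilon$, and pass to the limit via complete continuity) is the classical route for nonlinear Krein--Rutman statements, and the upper bound, the lower bound $\lambda_\epsilon\ge 1/M$ extracted from $u\preceq M\cT u$, and the final limit passage are sound in outline. (One should read the hypothesis as $u\in\sP\setminus\{0\}$; your reduction already uses $u\succeq 0$, since $u\preceq M\cT u$ alone does not place $\cT u$ in $\sP$.)

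The genuine gap is in the fixed-point step. Schauder's theorem needs a convex set mapped into itself, and $\Sigma=\{x\in\sP\colon\norm{x}=1\}$ is not convex; your remedy — apply Schauder on $\overline{\conv}\,F_\epsilon(\Sigma)\cap\sP$ "after a standard retraction" — does not work as stated, because that set can contain $0$. For instance, in $C[0,1]$ with the (normal) cone of nonnegative functions, the compact family of unit-height translated bumps $\{f_t\}_{t\in[0,1]}$ satisfies $\bnorm{\tfrac1n\sum_{i}f_{t_i}}=\tfrac1n\to0$ when the supports are disjoint, so $0$ lies in the closed convex hull of a compact subset of $\Sigma$, and there $F_\epsilon$ and the radial retraction are undefined. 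The standard repair is to apply Schauder instead to $G_\epsilon(x)\df\bigl(\cT(x)+\epsilon u\bigr)\big/\norm{\cT(x)+\epsilon u}$ on the closed convex set $\mathcal{C}=\{x\in\sP\colon\norm{x}\le1\}$: the denominator is bounded away from zero on $\mathcal{C}$ (by complete continuity and closedness of the cone, since $\cT(x_n)+\epsilon u\to0$ would force $-\epsilon u\in\sP$), and $G_\epsilon$ maps $\mathcal{C}$ into $\Sigma\subset\mathcal{C}$, so any fixed point automatically has norm one. Two smaller soft spots: the claimed order-preservation of $\cT_\epsilon(x)=\cT(x)+\epsilon\norm{x}u$ requires a monotone norm on $\sP$ (normality), which is not among the hypotheses — but it is also not needed, since the lower-bound iteration can be run with $\cT$ alone; and your "variational inequality" for $\lambda_\epsilon$ needs an initial domination $y\preceq t\,x_\epsilon$ to start the iteration, which fails for general $y\in\sP\setminus\{0\}$ but does hold for $y=u$ precisely because $\lambda_\epsilon x_\epsilon=\cT_\epsilon(x_\epsilon)\succeq\epsilon u$. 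With these repairs the argument goes through.
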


In the above theorem, `$\preceq$' denotes the partial ordering with respect to $\sP$.
Assume $c\le 0$. Let $\cX=\cC_0(D)$ and $\sP$ be the cone of nonnegative functions.
For our purposes, given $u\in\cC_0(D)$,
we let $v=\cT u\in \cC_0(D)\cap\Sob^{2,p}(D)$ denote the solution of
\begin{equation*}
\cI v(x) \,=\, -u(x)\quad \text{in\ } D\,, \quad \text{and\ } v=0 \text{\ in\ } D^c\,.
\end{equation*}
This map is well defined by \cref{T6.3}.
Since the operator is proper (i.e., it is non-increasing with respect to
the zeroth order term)
we can apply \cref{T6.1} to obtain
\begin{equation*}
\sup_{D}\,\abs{v} \,\le\, \kappa\, \sup_{D}\,\abs{u}
\end{equation*}
for some constant $\kappa$, not depending on $u$, or $v$.
Next we write 
\begin{equation*}
\cI v - \inf_{\zeta\in\Act} I[v, x, \zeta] \,=\, -u - \inf_{\zeta\in\Act} I[v, x, \zeta]\,,
\end{equation*}
and apply \cite[Theorem~4.6]{W09} to obtain
\begin{equation*}
\norm{v}_{\Sob^{2,p}(D)} \,\le\, \kappa_1 \sup_{D}\,\abs{u}
\end{equation*}
for some constant $\kappa_1$.
This of course implies that $\cT $ is an compact operator.
It is also standard to show that it is continuous. 
It is easily seen that $\cT$ is $1$-homogeneous.
Also note that $\cT(\sP)\subset\sP$.
In fact, for $u_1\le u_2$, we have
$\cI v_1(x) \ge \cI v_2(x)$.
From the concavity of $\cI$, this gives us
$\cI (v_2-v_1) \le 0$.
Thus, since $\cI$ is a proper operator, we see from \cref{T6.1} that $v_2\ge v_1$.
This inequality is strict if $u_1\lneq u_2$. 
Indeed, with $v=v_2-v_1$,  we have 
\begin{equation*}
\begin{aligned}
\trace (a\grad^2 v)(x) +\inf_{\zeta\in\Act} \{b\bigl(x,\zeta)\bigr)\cdot\grad v(x)
-  \bigl(\nu(x,\zeta,\Rd)-c\bigl(x,\zeta(x)\bigr)\bigr) v(x)\}
&\,\le\, \cI v(x) \\
&\,\le\, u_1(x)-u_2(x) \,\lneq\, 0\,.
\end{aligned}
\end{equation*}
Then by a version of Hopf's boundary lemma \cite[Lemma~3.1]{QS08},
we must have $v=v_2-v_1>0$ in $D$.

Now consider a function
$u\in \sP$ which is compactly supported in $D$, $u\ne0$.
It follows from the analysis above that $v=\cT u >0$ in $D$.
Thus we can find $M>0$ satisfying $M\cT u-u>0$ in $D$. 
Therefore, by the Krein--Rutman theorem (\cref{T6.4} above) we have $\lambda>0$
and $\psi>0$ in $D$ such that
\begin{equation*}
\cI \psi \,=\, \lambda \psi\quad \text{in\ } D\,, \quad\text{and\ \ }
\psi=0\ \ \text{on\ }D^c\,.
\end{equation*}

\begin{proof}[Proof of \cref{T2.1}]
Since $c$ is bounded, replacing $c$ by $c-\norm{c}_\infty$ it follows
from the above discussion that there 
exists $\lambda\in\RR$ and $\psi\in \cC(\Bar{D})\cap\Sobl^{2,p}(D)$, $p>d$,
satisfying
\begin{equation*}
\cI \psi \,=\, \lambda\, \psi \quad \text{in\ } D\,,
\qquad \psi >0 \quad \mbox{in\ } D\,,\qquad\text{and\ \ }
\psi \,=\,0\quad \text{in\ } D^c\,.
\end{equation*}
It is clear then that the proof is complete if we establish the following claim.

\noindent\textbf{Claim:} Suppose that
$u\in\cC_{b, +}(\Rd)\cap\Sobl^{2,d}(D)$, satisfies
$u>0$ in $D$ and
$\cI u \le  \lambda u$ in $D$.
Then $u= C\psi$ for some constant $C$.

Let $K$ be a compact subset of $D$ such that narrow domain maximum principle,
\cref{T6.2}, holds in $D\setminus K$.
Consider $w_t = t\psi-u$.
We can choose $t>0$ small enough so that $w_t\le 0$ in $K$.
Also note that 
\begin{equation*}
\trace (a\grad^2 w_t)(x) + \sup_{\zeta\in\Act}I[w_t,x, \zeta] + \sup_{\zeta\in\Act}\,
\bigl\{b(x,\zeta)\cdot \grad w_t(x)
+ (c(x,\zeta)-\lambda) w_t(x)\bigr\} \,\ge\, 0\,.
\end{equation*}
Applying \cref{T6.2} we see that $w_t\le 0$ in $D$. Since 
\begin{equation*}
\trace (a\grad^2 w_t)(x) - w_t(x) \inf_{\zeta\in\Act}\nu(x,\zeta,\Rd)
+ \sup_{\zeta\in\Act}\, \bigl\{b(x,\zeta)\cdot \grad w_t(x)
- (c(x,\zeta)-\lambda)^- w_t(x)\bigr\} \,\ge\,  0\,,
\end{equation*}
applying the strong maximum principle \cite[Theorem~9.6]{GilTru},
we must either have $w_t=0$ or $w_t<0$ in $D$.
Suppose that the second option holds. Then we may define
\begin{equation*}
\mathfrak{t} \,=\, \sup\,\{t>0 \,\colon\, w_t<0 \quad \text{in\ } D\}\,.
\end{equation*}
By the above argument, $\mathfrak{t}>0$, and by strong maximum principle
\cite[Theorem~9.6]{GilTru} we must have either $w_\mathfrak{t}=0$
or $w_\mathfrak{t}<0$.
If $w_\mathfrak{t}<0$, then for some $\delta>0$ we have $w_\mathfrak{t+\delta}<0$ in $K$,
and therefore, repeating the argument above, we obtain $w_\mathfrak{t+\delta}<0$ in $D$.
This contradicts the definition of $\mathfrak{t}$. So the only possibility
is $w_\mathfrak{t}=0$, which implies that $u=\mathfrak{t}\psi$.
This proves the  claim, and completes the proof of the theorem.
\end{proof}

\begin{proof}[Proof of \cref{C2.1}]
Suppose that $\lambda_{D'}=\lambda_D$.
Then by \cref{T2.1} we have $\cI\psi_{D'}=\lambda_{D'} \psi_{D'}$ in $D'$,
and $\psi_{D'}>0$ in $D'$.
Then it follows from the proof of \cref{T2.1} that $\psi_D=\psi_{D'}$ in $D$,
which is a contradiction as $D\subsetneq D'$.
\end{proof}

We need the following boundary estimate for the proof of \cref{T2.2}.

\begin{lemma}\label{L6.1}
Suppose that $\norm{u}_\infty\le 1$, and it satisfies
\begin{equation*}
\trace (a\grad^2 u) + \delta\abs{\grad u} \,\ge\, L \text{\ \ in\ } Q\,, \quad u=0
\text{\ \ in\ } Q^c\,,
\end{equation*}
where $Q\subset D$ is a subdomain of $D$
having the exterior sphere property with radius $r>0$.
Then for $s\in(0,1)$, there exist constants $M$, and $\varepsilon$,
depending only on
$\delta$, $L$, $r$, and $s$, such that
\begin{equation*}
\abs{u(x)}\,\le\, M \dist(x,\partial Q)^s, \quad \text{for all\ } x\in Q
\text{\ such that\ }
\dist(x, \partial Q)<\varepsilon\,.
\end{equation*}
\end{lemma}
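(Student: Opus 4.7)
I would prove Lemma~6.1 by a standard barrier argument exploiting the exterior sphere property at $\partial Q$, combined with the linear maximum principle. To set this up, I would fix $x\in Q$ with $\dist(x,\partial Q)<\varepsilon$ (where $\varepsilon>0$ is to be chosen uniformly), let $x_0\in\partial Q$ be a nearest boundary point to $x$, and let $y_0$ be the center of the exterior ball of radius $r$ touching $\partial Q$ at $x_0$, placed on the outward-normal ray through $x_0$. Then $B_r(y_0)\subset Q^c$, the three points $y_0,x_0,x$ are collinear, and the crucial identity $|x-y_0|-r=\dist(x,\partial Q)$ holds. The barrier would be
\begin{equation*}
\phi(y)\,\df\,A\,\bigl(|y-y_0|-r\bigr)^s\,,\qquad |y-y_0|>r\,,
\end{equation*}
with $A>0$ to be fixed shortly.

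Writing $\rho:=|y-y_0|$, and using $|\grad\rho|=1$, $\grad^2\rho=\rho^{-1}\bigl(I-\grad\rho\otimes\grad\rho\bigr)$, together with the uniform ellipticity $\upkappa I\le a\le\upkappa^{-1}I$ from Assumption~2.1, a direct calculation yields
\begin{equation*}
\trace(a\grad^2\phi)+\delta|\grad\phi|\,\le\,As(\rho-r)^{s-2}\Bigl[-(1-s)\upkappa+(\rho-r)\bigl(\delta+d\,\upkappa^{-1}r^{-1}\bigr)\Bigr]
\end{equation*}
for $\rho>r$, with $d$ the ambient dimension. The key calibration is then to set
\begin{equation*}
\varepsilon\,\df\,\min\Bigl\{r,\;\tfrac{(1-s)\upkappa}{2(\delta+d\,\upkappa^{-1}r^{-1})}\Bigr\}\,,\qquad A\,\df\,\max\Bigl\{\varepsilon^{-s},\;\tfrac{2|L|\,\varepsilon^{2-s}}{s(1-s)\upkappa}\Bigr\}\,,
\end{equation*}
so that on the annular shell $\{r<\rho<r+\varepsilon\}$ the bracket above is $\le-(1-s)\upkappa/2$; using $(\rho-r)^{s-2}\ge\varepsilon^{s-2}$ (which requires precisely $s<1$), this gives $\trace(a\grad^2\phi)+\delta|\grad\phi|\le L$ throughout the shell, while simultaneously $\phi\equiv A\varepsilon^s\ge 1$ on $\{\rho=r+\varepsilon\}$.

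For the comparison step, I would rewrite the hypothesis as $\trace(a\grad^2 u)+b\cdot\grad u\ge L$ with the measurable selection $b(y):=\delta\,\grad u(y)/|\grad u(y)|$ (and $b:=0$ where $\grad u$ vanishes), so $|b|\le\delta$ pointwise. On the cap $E:=Q\cap B_{r+\varepsilon}(y_0)$, the difference $w:=\phi-u$ then satisfies $\trace(a\grad^2 w)+b\cdot\grad w\le L-L=0$ a.e., and the boundary values of $w$ are nonnegative: on $\partial Q\cap\overline B_{r+\varepsilon}(y_0)$, $u=0$ (by continuity of $u\in\Sob^{2,p}(D)$ with $p>d$, extended by $0$ on $Q^c$) while $\phi\ge 0$; and on $Q\cap\partial B_{r+\varepsilon}(y_0)$, $\phi=A\varepsilon^s\ge 1\ge u$. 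Applying the linear ABP maximum principle (Theorem~6.1 with $c\equiv 0$) to $-w$ then yields $w\ge 0$ on $E$, whence $u(x)\le\phi(x)=A\,\dist(x,\partial Q)^s$. A parallel argument applied to $-u$ (using the analogous measurable drift $-b$) gives $-u(x)\le A\,\dist(x,\partial Q)^s$, so that $|u(x)|\le M\,\dist(x,\partial Q)^s$ with $M:=A$. The main obstacle is the barrier computation: $\varepsilon$ must be calibrated so that the leading $-(1-s)\upkappa(\rho-r)^{s-2}$ contribution dominates the first-order correction throughout the shell, and the restriction $s<1$ is essential precisely because it is $s(s-1)<0$ that endows $\phi$ with its supersolution sign.
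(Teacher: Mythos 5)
Your argument is essentially identical to the paper's proof: the same barrier $A\bigl(\abs{y-y_0}-r\bigr)^s$ anchored at the center of the exterior tangent ball, the same calibration of $\varepsilon$ and of the constant exploiting $s(s-1)<0$ to make the barrier a strict supersolution on the thin annular cap, and the same comparison principle there — your computation is in fact more explicit than the paper's two-line version, and your sign condition $\trace(a\grad^2\phi)+\delta\abs{\grad\phi}\le L$ is the correct one for the comparison. The only quibble is your last step: $-u$ does not inherit the one-sided hypothesis $\trace(a\grad^2 u)+\delta\abs{\grad u}\ge L$, so the lower bound on $u$ does not follow "by a parallel argument" as claimed; this imprecision is, however, shared by the lemma statement and the paper's own proof, and is immaterial in the sole application (\cref{T2.2}), where $u=\psi_{D_n}\ge0$.
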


\begin{proof}
Translating the origin if needed,
let $\sB_r$ be a ball of radius $r$ centered at $0$ that touches
$\partial Q$ from outside.
Without loss of generality we assume $\overline\sB_r\subset D$.
Define $\rho(x) = M (\abs{x}-r)^s$.
Then an easy calculation shows that we can find a constant $M>1$ satisfying
\begin{equation*}
 \trace (a\grad^2 \rho) + \delta\abs{\grad \rho} \,<\,-L \quad \text{in\ }
\sB_{r+\varepsilon}\setminus \overline{\sB}_r\,,
\end{equation*}
and $\rho\ge 1$ in $\sB^c_{r+\varepsilon}$.
The result follows from applying the comparison principle in
$(\sB_{r+\varepsilon}\setminus \overline{\sB}_r)\cap Q$.
\end{proof}

We are now ready to prove \cref{T2.2}.

\begin{proof}[Proof of \cref{T2.2}]
Let $\lim_{n\to\infty}\, \lambda_{D_n}=\Tilde\lambda$.
Note that $\Tilde\lambda \ge \lambda_D$.
Normalize the eigenfunctions to satisfy $\norm{\psi_{D_n}}_\infty=1$.
Using \cref{L6.1} and the interior estimate, it can be easily seen that the family
$\{\psi_{D_n}\}$ is equicontinuous and each limit point $u$ is a solution to
$\cI u = \Tilde\lambda u$ (or $\sA^c u = \Tilde\lambda u$).
By the strong maximum principle, we must have $u>0$.
It then follows from the
proof of \cref{T2.1} that $\Tilde\lambda=\lambda_D$.
\end{proof}

\begin{proof}[Proof of \cref{T2.3}]
We start with part (i).
It follows from the definition that $\lambda_D(c)\le\lambda_D(c')$.
Suppose that $\lambda_D(c)=\lambda_D(c')$.
Let $\psi_{c}$ and $\psi_{c'}$ denote the
principal eigenfunctions corresponding to $c$ and $c'$, respectively. Then 
\begin{equation*}
\cI \psi_{c'}(x)
\,\le\,  \lambda_D(c') \psi_{c'}(x)\quad \text{in\ }D\,,
\end{equation*}
and the proof of \cref{T2.1} shows that
the eigenfunction $\psi_c$ must be of the form $\kappa\psi_{c'}$ for some $\kappa>0$.
This contradicts the fact that $c\lneq c'$.
The exact same argument holds for $\sA^c$.

Next we prove that $\lambda_D(c)$ is a convex function of $c$.
Let $\varphi_0$ and $\varphi_1$ denote the eigenfunctions
corresponding to potentials $c_0$ and $c_1$, respectively.
Define $\varphi(x)=\varphi_0^\theta(x) \varphi_1^{1-\theta}(x)$.
Since $\varphi_0, \varphi_1>0$ in $D$, it is easy to see that
$\varphi\in\Sobl^{2, d}(D)\cap \cC(\Rd)$.
An easy calculation shows that
\begin{align*}
\trace(a\grad^2 \varphi) &\,=\, \theta \frac{\varphi}{\varphi_0}
\trace\bigl(a\grad^2 \varphi_0\bigr)
+ (1-\theta) \frac{\varphi}{\varphi_1} \trace\bigl(a\grad^2 \varphi_1\bigr)\\
&\mspace{50mu} + \varphi \left\langle\frac{\theta}{\varphi_0} \grad \varphi_0
+ \frac{1-\theta}{\varphi_1} \grad \varphi_1, a  \frac{\theta}{\varphi_0}
\grad \varphi_0+ \frac{1-\theta}{\varphi_1} \grad \varphi_1\right\rangle
\\
&\mspace{100mu}- \varphi \left(\frac{\theta}{\varphi^2_0}
\langle \grad\varphi_0, a\grad \varphi_0\rangle
+ \frac{1-\theta}{\varphi^2_1} \langle \grad\varphi_1, a\grad \varphi_1\rangle \right)
\\
&\,\le\, \theta \frac{\varphi}{\varphi_0} \trace(a\grad^2 \varphi_0)
+ (1-\theta) \frac{\varphi}{\varphi_1} \trace(a\grad^2 \varphi_1)\,,
\end{align*}
where the last line follows from convexity. Also, by Minkowski's inequality
\begin{align*}
I[\varphi,x] &\,=\, \varphi \int_{\Rd} \left(\frac{\varphi_0^{\theta}(x+z)}
{\varphi_0^\theta(x)} \frac{\varphi_1^{1-\theta}(x+z)}{\varphi_1^{1-\theta}(x)} -1\right)
\, \nu(x,\D{z})\\
&\,\le\, \varphi(x) \int_{\Rd} \left(\theta \frac{\varphi_0(x+z)}{\varphi_0(x)}
+(1-\theta) \frac{\varphi_1(x+z)}{\varphi_1(x)} -1\right)\, \nu(x,\D{z})\\
&\,=\, \theta \frac{\varphi(x)}{\varphi_0(x)} I[\varphi_0, x]
+ (1-\theta) \frac{\varphi(x)}{\varphi_1(x)} I[\varphi_1, x]\,.
\end{align*}
Thus, combining the above estimates. it follows that
with $c=\theta c_1 +(1-\theta) c_2$, we have
\begin{equation*}
\sA^c \varphi(x) \,\le\,
\bigl(\theta \lambda_D(c_0) + (1-\theta)\lambda_D(c_1)\bigr)\varphi(x)\,,
 \quad x\in D\,.
\end{equation*}
Therefore,
 $\lambda_\theta \le \theta \lambda_D(c_0) + (1-\theta)\lambda_D(c_1)$,
and the proof of part (ii) is complete.
 
The proof for the operator $\cI$ is essentially the same.
\end{proof}

\section*{Acknowledgment}
The research of Ari Arapostathis was supported
in part by the National Science Foundation through grant DMS-1715210,
in part by the Army Research Office through grant W911NF-17-1-001,
and in part by the Office of Naval Research through grant N00014-16-1-2956
and was approved for public release under DCN \#43-4933-19.
The research of Anup Biswas was supported in part by an INSPIRE faculty fellowship
and DST-SERB grants EMR/2016/004810, MTR/2018/000028.


\begin{bibdiv}
\begin{biblist}

\bib{ali-bor}{book}{
      author={Aliprantis, C.~D.},
      author={Border, K.~C.},
       title={Infinite-dimensional analysis},
     edition={Second},
   publisher={Springer-Verlag, Berlin},
        date={1999},
        ISBN={3-540-65854-8},
        note={A hitchhiker's guide},
      review={\MR{1717083 (2000k:46001)}},
}

\bib{book}{book}{
      author={Arapostathis, A.},
      author={Borkar, V.~S.},
      author={Ghosh, M.~K.},
       title={Ergodic control of diffusion processes},
      series={Encyclopedia Math. Appl.},
   publisher={Cambridge University Press},
     address={Cambridge},
        date={2012},
      volume={143},
      review={\MR{2884272}},
}

\bib{AHPS-19}{collection}{
      author={Arapostathis, A.},
      author={Hmedi, Hassan},
      author={Pang, G.},
      author={Sandri\'{c}, Nikola},
      editor={Yin, George},
      editor={Zhang, Qing},
       title={Uniform polynomial rates of convergence for a class of
  {L}\'evy-driven controlled {SDE}s arising in multiclass many-server queues},
      series={Modeling, Stochastic Control, Optimization, and Applications. IMA
  Vol. Math. Appl.},
   publisher={Springer, New York},
        date={2019},
      volume={164},
      review={\MR{3970163}},
}

\bib{A18}{article}{
      author={Arapostathis, Ari},
       title={A counterexample to a nonlinear version of the
  {K}re\u{\i}n-{R}utman theorem by {R}. {M}ahadevan},
        date={2018},
     journal={Nonlinear Anal.},
      volume={171},
       pages={170\ndash 176},
      review={\MR{3778277}},
}

\bib{AB18}{article}{
      author={Arapostathis, Ari},
      author={Biswas, Anup},
       title={Infinite horizon risk-sensitive control of diffusions without any
  blanket stability assumptions},
        date={2018},
        ISSN={0304-4149},
     journal={Stochastic Process. Appl.},
      volume={128},
      number={5},
       pages={1485\ndash 1524},
      review={\MR{3780687}},
}

\bib{AB18a}{article}{
      author={Arapostathis, Ari},
      author={Biswas, Anup},
       title={A variational formula for risk-sensitive control of diffusions in
  {$\mathbb R^d$}},
        date={2020},
        ISSN={0363-0129},
     journal={SIAM J. Control Optim.},
      volume={58},
      number={1},
       pages={85\ndash 103},
      review={\MR{4048004}},
}

\bib{ABS19}{article}{
      author={Arapostathis, Ari},
      author={Biswas, Anup},
      author={Saha, Subhamay},
       title={Strict monotonicity of principal eigenvalues of elliptic
  operators in {$\mathbb{R}^d$} and risk-sensitive control},
        date={2019},
     journal={J. Math. Pures Appl. (9)},
      volume={124},
       pages={169\ndash 219},
      review={\MR{3926044}},
}

\bib{ACPZ18}{article}{
      author={Arapostathis, Ari},
      author={Caffarelli, Luis},
      author={Pang, Guodong},
      author={Zheng, Yi},
       title={Ergodic control of a class of jump diffusions with finite
  {L}\'{e}vy measures and rough kernels},
        date={2019},
        ISSN={0363-0129},
     journal={SIAM J. Control Optim.},
      volume={57},
      number={2},
       pages={1516\ndash 1540},
      review={\MR{3942851}},
}

\bib{APS19}{article}{
      author={Arapostathis, Ari},
      author={Pang, Guodong},
      author={Sandri\'{c}, Nikola},
       title={Ergodicity of a {L}\'{e}vy-driven {SDE} arising from multiclass
  many-server queues},
        date={2019},
     journal={Ann. Appl. Probab.},
      volume={29},
      number={2},
       pages={1070\ndash 1126},
      review={\MR{3910024}},
}

\bib{AB-20a}{article}{
      author={Arapostathis, Ari},
      author={Pang, Guodong},
      author={Zheng, Yi},
       title={Ergodic control of diffusions with compound {P}oisson jumps under
  a general structural hypothesis},
        date={2020},
        ISSN={0304-4149},
     journal={Stochastic Process. Appl.},
      volume={130},
      number={11},
       pages={6733\ndash 6756},
      review={\MR{4158801}},
}

\bib{ADS03}{article}{
      author={Atar, Rami},
      author={Dupuis, Paul},
      author={Shwartz, Adam},
       title={An escape-time criterion for queueing networks: asymptotic
  risk-sensitive control via differential games},
        date={2003},
        ISSN={0364-765X},
     journal={Math. Oper. Res.},
      volume={28},
      number={4},
       pages={801\ndash 835},
      review={\MR{2015913}},
}

\bib{Bass-09}{article}{
      author={Bass, Richard~F.},
       title={Regularity results for stable-like operators},
        date={2009},
        ISSN={0022-1236},
     journal={J. Funct. Anal.},
      volume={257},
      number={8},
       pages={2693\ndash 2722},
      review={\MR{2555009}},
}

\bib{BA15}{article}{
      author={Befekadu, Getachew~K.},
      author={Antsaklis, Panos~J.},
       title={On the asymptotic estimates for exit probabilities and minimum
  exit rates of diffusion processes pertaining to a chain of distributed
  control systems},
        date={2015},
        ISSN={0363-0129},
     journal={SIAM J. Control Optim.},
      volume={53},
      number={4},
       pages={2297\ndash 2318},
      review={\MR{3376771}},
}

\bib{BF92}{article}{
      author={Bensoussan, A.},
      author={Frehse, J.},
       title={On {B}ellman equations of ergodic control in {${\bf R}^n$}},
        date={1992},
        ISSN={0075-4102},
     journal={J. Reine Angew. Math.},
      volume={429},
       pages={125\ndash 160},
      review={\MR{1173120}},
}

\bib{Berestycki-15}{article}{
      author={Berestycki, Henri},
      author={Rossi, Luca},
       title={Generalizations and properties of the principal eigenvalue of
  elliptic operators in unbounded domains},
        date={2015},
     journal={Comm. Pure Appl. Math.},
      volume={68},
      number={6},
       pages={1014\ndash 1065},
      review={\MR{3340379}},
}

\bib{BP99}{article}{
      author={Bielecki, T.~R.},
      author={Pliska, S.~R.},
       title={Risk-sensitive dynamic asset management},
        date={1999},
        ISSN={0095-4616},
     journal={Appl. Math. Optim.},
      volume={39},
      number={3},
       pages={337\ndash 360},
      review={\MR{1675114}},
}

\bib{Biswas-11a}{article}{
      author={Biswas, Anup},
       title={An eigenvalue approach to the risk sensitive control problem in
  near monotone case},
        date={2011},
     journal={Systems Control Lett.},
      volume={60},
      number={3},
       pages={181\ndash 184},
      review={\MR{2808061}},
}

\bib{Biswas-11}{article}{
      author={Biswas, Anup},
       title={Risk sensitive control of diffusions with small running cost},
        date={2011},
     journal={Appl. Math. Optim.},
      volume={64},
      number={1},
       pages={1\ndash 12},
      review={\MR{2796095}},
}

\bib{Biswas-10}{article}{
      author={Biswas, Anup},
      author={Borkar, V.~S.},
      author={Suresh~Kumar, K.},
       title={Risk-sensitive control with near monotone cost},
        date={2010},
     journal={Appl. Math. Optim.},
      volume={62},
      number={2},
       pages={145\ndash 163},
      review={\MR{2679473}},
}

\bib{BB10}{article}{
      author={Biswas, Anup},
      author={Borkar, Vivek~S.},
       title={On a controlled eigenvalue problem},
        date={2010},
        ISSN={0167-6911},
     journal={Systems Control Lett.},
      volume={59},
      number={11},
       pages={734\ndash 735},
      review={\MR{2767905}},
}

\bib{BS18}{article}{
      author={Biswas, Anup},
      author={Saha, Subhamay},
       title={Zero-sum stochastic differential games with risk-sensitive cost},
        date={2020},
        ISSN={0095-4616},
     journal={Appl. Math. Optim.},
      volume={81},
      number={1},
       pages={113\ndash 140},
      review={\MR{4058410}},
}

\bib{BorMey02}{article}{
      author={Borkar, V.~S.},
      author={Meyn, S.~P.},
       title={Risk-sensitive optimal control for {M}arkov decision processes
  with monotone cost},
        date={2002},
        ISSN={0364-765X},
     journal={Math. Oper. Res.},
      volume={27},
      number={1},
       pages={192\ndash 209},
      review={\MR{1886226}},
}

\bib{CCKS}{article}{
      author={Caffarelli, L.},
      author={Crandall, M.~G.},
      author={Kocan, M.},
      author={Swi\c{e}ch, A.},
       title={On viscosity solutions of fully nonlinear equations with
  measurable ingredients},
        date={1996},
     journal={Comm. Pure Appl. Math.},
      volume={49},
      number={4},
       pages={365\ndash 397},
      review={\MR{1376656}},
}

\bib{Caff-Silv-09}{article}{
      author={Caffarelli, Luis},
      author={Silvestre, Luis},
       title={Regularity theory for fully nonlinear integro-differential
  equations},
        date={2009},
        ISSN={0010-3640},
     journal={Comm. Pure Appl. Math.},
      volume={62},
      number={5},
       pages={597\ndash 638},
      review={\MR{2494809}},
}

\bib{Champagnat-16}{article}{
      author={Champagnat, Nicolas},
      author={Villemonais, Denis},
       title={Exponential convergence to quasi-stationary distribution and
  {$Q$}-process},
        date={2016},
        ISSN={0178-8051},
     journal={Probab. Theory Related Fields},
      volume={164},
      number={1-2},
       pages={243\ndash 283},
      review={\MR{3449390}},
}

\bib{DGR18}{article}{
      author={Das, Milan~Kumar},
      author={Goswami, Anindya},
      author={Rana, Nimit},
       title={Risk sensitive portfolio optimization in a jump diffusion model
  with regimes},
        date={2018},
        ISSN={0363-0129},
     journal={SIAM J. Control Optim.},
      volume={56},
      number={2},
       pages={1550\ndash 1576},
      review={\MR{3790072}},
}

\bib{DQT}{article}{
      author={D\'{a}vila, Gonzalo},
      author={Quaas, Alexander},
      author={Topp, Erwin},
       title={Existence, nonexistence and multiplicity results for nonlocal
  {D}irichlet problems},
        date={2019},
     journal={J. Differential Equations},
      volume={266},
      number={9},
       pages={5971\ndash 5997},
      review={\MR{3912772}},
}

\bib{DL08}{article}{
      author={Davis, Mark},
      author={Lleo, S\'{e}bastien},
       title={Risk-sensitive benchmarked asset management},
        date={2008},
        ISSN={1469-7688},
     journal={Quant. Finance},
      volume={8},
      number={4},
       pages={415\ndash 426},
      review={\MR{2435642}},
}

\bib{DL11}{article}{
      author={Davis, Mark},
      author={Lleo, S\'{e}bastien},
       title={Jump-diffusion risk-sensitive asset management {I}: diffusion
  factor model},
        date={2011},
     journal={SIAM J. Financial Math.},
      volume={2},
      number={1},
       pages={22\ndash 54},
      review={\MR{2756016}},
}

\bib{DL13}{article}{
      author={Davis, Mark},
      author={Lleo, S\'{e}bastien},
       title={Jump-diffusion risk-sensitive asset management {II}:
  {J}ump-diffusion factor model},
        date={2013},
        ISSN={0363-0129},
     journal={SIAM J. Control Optim.},
      volume={51},
      number={2},
       pages={1441\ndash 1480},
      review={\MR{3038020}},
}

\bib{DMS99}{article}{
      author={Di~Masi, G.~B.},
      author={Stettner, L.},
       title={Risk-sensitive control of discrete-time {M}arkov processes with
  infinite horizon},
        date={1999},
        ISSN={0363-0129},
     journal={SIAM J. Control Optim.},
      volume={38},
      number={1},
       pages={61\ndash 78},
      review={\MR{1740607}},
}

\bib{DMS07}{article}{
      author={Di~Masi, Giovanni~B.},
      author={Stettner, {\L}ukasz},
       title={Infinite horizon risk sensitive control of discrete time {M}arkov
  processes under minorization property},
        date={2007},
        ISSN={0363-0129},
     journal={SIAM J. Control Optim.},
      volume={46},
      number={1},
       pages={231\ndash 252},
      review={\MR{2299627}},
}

\bib{DV75}{article}{
      author={Donsker, M.~D.},
      author={Varadhan, S. R.~S.},
       title={Asymptotic evaluation of certain {M}arkov process expectations
  for large time. {I}. {II}},
        date={1975},
        ISSN={0010-3640},
     journal={Comm. Pure Appl. Math.},
      volume={28},
       pages={1\ndash 47; ibid. 28 (1975), 279\ndash 301},
      review={\MR{0386024}},
}

\bib{DVIII}{article}{
      author={Donsker, M.~D.},
      author={Varadhan, S. R.~S.},
       title={Asymptotic evaluation of certain {M}arkov process expectations
  for large time. {III}},
        date={1976},
        ISSN={0010-3640},
     journal={Comm. Pure Appl. Math.},
      volume={29},
      number={4},
       pages={389\ndash 461},
      review={\MR{0428471}},
}

\bib{DV76}{article}{
      author={Donsker, M.~D.},
      author={Varadhan, S. R.~S.},
       title={On the principal eigenvalue of second-order elliptic differential
  operators},
        date={1976},
        ISSN={0010-3640},
     journal={Comm. Pure Appl. Math.},
      volume={29},
      number={6},
       pages={595\ndash 621},
      review={\MR{0425380}},
}

\bib{DK89}{article}{
      author={Dupuis, Paul},
      author={Kushner, Harold},
       title={Minimizing escape probabilities: a large deviations approach},
        date={1989},
        ISSN={0363-0129},
     journal={SIAM J. Control Optim.},
      volume={27},
      number={2},
       pages={432\ndash 445},
      review={\MR{984836}},
}

\bib{DM97}{article}{
      author={Dupuis, Paul},
      author={McEneaney, William~M.},
       title={Risk-sensitive and robust escape criteria},
        date={1997},
        ISSN={0363-0129},
     journal={SIAM J. Control Optim.},
      volume={35},
      number={6},
       pages={2021\ndash 2049},
      review={\MR{1478651}},
}

\bib{FS00}{article}{
      author={Fleming, W.~H.},
      author={Sheu, S.~J.},
       title={Risk-sensitive control and an optimal investment model},
        date={2000},
        ISSN={0960-1627},
     journal={Math. Finance},
      volume={10},
      number={2},
       pages={197\ndash 213},
        note={INFORMS Applied Probability Conference (Ulm, 1999)},
      review={\MR{1802598}},
}

\bib{FM95}{article}{
      author={Fleming, Wendell~H.},
      author={McEneaney, William~M.},
       title={Risk-sensitive control on an infinite time horizon},
        date={1995},
        ISSN={0363-0129},
     journal={SIAM J. Control Optim.},
      volume={33},
      number={6},
       pages={1881\ndash 1915},
      review={\MR{1358100}},
}

\bib{Fleming-85}{collection}{
      author={Fleming, Wendell~H.},
      author={Souganidis, Panagiotis~E.},
       title={A {PDE} approach to asymptotic estimates for optimal exit
  probabilities},
      series={Lect. Notes Control Inf. Sci.},
   publisher={Springer, Berlin},
        date={1985},
      volume={69},
      review={\MR{798331}},
}

\bib{GM2002}{book}{
      author={Garroni, Maria~Giovanna},
      author={Menaldi, Jose~Luis},
       title={Second order elliptic integro-differential problems},
      series={Research Notes in Mathematics},
   publisher={Chapman \& Hall/CRC, Boca Raton, FL},
        date={2002},
      volume={430},
        ISBN={1-58488-200-X},
      review={\MR{1911531}},
}

\bib{GS14}{article}{
      author={Ghosh, Mrinal~K.},
      author={Saha, Subhamay},
       title={Risk-sensitive control of continuous time {M}arkov chains},
        date={2014},
        ISSN={1744-2508},
     journal={Stochastics},
      volume={86},
      number={4},
       pages={655\ndash 675},
      review={\MR{3230073}},
}

\bib{GS72}{book}{
      author={G\={i}hman, \u{I}.~\={I}.},
      author={Skorohod, A.~V.},
       title={Stochastic differential equations},
   publisher={Springer-Verlag, New York-Heidelberg},
        date={1972},
        note={
  Mathematik und ihrer Grenzgebiete, Band 72},
      review={\MR{0346904}},
}

\bib{GilTru}{book}{
      author={Gilbarg, David},
      author={Trudinger, Neil~S.},
       title={Elliptic partial differential equations of second order},
     edition={Second},
      series={Math. Wiss.},
   publisher={Springer-Verlag, Berlin},
        date={1983},
      volume={224},
      review={\MR{737190}},
}

\bib{Gong-88}{article}{
      author={Gong, Guang~Lu},
      author={Qian, Min~Ping},
      author={Zhao, Zhong~Xin},
       title={Killed diffusions and their conditioning},
        date={1988},
        ISSN={0178-8051},
     journal={Probab. Theory Related Fields},
      volume={80},
      number={1},
       pages={151\ndash 167},
      review={\MR{970476}},
}

\bib{Gyongy-96}{article}{
      author={Gy\"{o}ngy, Istv\'{a}n},
      author={Krylov, Nicolai},
       title={Existence of strong solutions for {I}t\^{o}'s stochastic
  equations via approximations},
        date={1996},
     journal={Probab. Theory Related Fields},
      volume={105},
      number={2},
       pages={143\ndash 158},
      review={\MR{1392450}},
}

\bib{HS10}{article}{
      author={Hata, Hiroaki},
      author={Sekine, Jun},
       title={Explicit solution to a certain non-{ELQG} risk-sensitive
  stochastic control problem},
        date={2010},
        ISSN={0095-4616},
     journal={Appl. Math. Optim.},
      volume={62},
      number={3},
       pages={341\ndash 380},
      review={\MR{2727339}},
}

\bib{Ichi11}{article}{
      author={Ichihara, Naoyuki},
       title={Recurrence and transience of optimal feedback processes
  associated with {B}ellman equations of ergodic type},
        date={2011},
        ISSN={0363-0129},
     journal={SIAM J. Control Optim.},
      volume={49},
      number={5},
       pages={1938\ndash 1960},
      review={\MR{2837506}},
}

\bib{Ichi15}{article}{
      author={Ichihara, Naoyuki},
       title={The generalized principal eigenvalue for
  {H}amilton-{J}acobi-{B}ellman equations of ergodic type},
        date={2015},
        ISSN={0294-1449},
     journal={Ann. Inst. H. Poincar\'{e} Anal. Non Lin\'{e}aire},
      volume={32},
      number={3},
       pages={623\ndash 650},
      review={\MR{3353703}},
}

\bib{J92}{article}{
      author={James, Matthew~R.},
       title={Asymptotic analysis of nonlinear stochastic risk-sensitive
  control and differential games},
        date={1992},
        ISSN={0932-4194},
     journal={Math. Control Signals Systems},
      volume={5},
      number={4},
       pages={401\ndash 417},
      review={\MR{1178849}},
}

\bib{JBE94}{article}{
      author={James, Matthew~R.},
      author={Baras, John~S.},
      author={Elliott, Robert~J.},
       title={Risk-sensitive control and dynamic games for partially observed
  discrete-time nonlinear systems},
        date={1994},
        ISSN={0018-9286},
     journal={IEEE Trans. Automat. Control},
      volume={39},
      number={4},
       pages={780\ndash 792},
      review={\MR{1276773}},
}

\bib{AJ07}{article}{
      author={Ja\'{s}kiewicz, Anna},
       title={Average optimality for risk-sensitive control with general state
  space},
        date={2007},
        ISSN={1050-5164},
     journal={Ann. Appl. Probab.},
      volume={17},
      number={2},
       pages={654\ndash 675},
      review={\MR{2308338}},
}

\bib{Kaise-06}{article}{
      author={Kaise, Hidehiro},
      author={Sheu, Shuenn-Jyi},
       title={On the structure of solutions of ergodic type {B}ellman equation
  related to risk-sensitive control},
        date={2006},
     journal={Ann. Probab.},
      volume={34},
      number={1},
       pages={284\ndash 320},
      review={\MR{2206349}},
}

\bib{Krylov}{book}{
      author={Krylov, N.~V.},
       title={Controlled diffusion processes},
      series={Appl. Math.},
   publisher={Springer-Verlag, New York-Berlin},
        date={1980},
      volume={14},
        ISBN={0-387-90461-1},
      review={\MR{601776}},
}

\bib{Kush00}{collection}{
      author={Kushner, Harold~J.},
       title={Jump-diffusions with controlled jumps: existence and numerical
  methods},
        date={2000},
      volume={249},
      number={1},
        note={Special issue in honor of Richard Bellman},
      review={\MR{1783763}},
}

\bib{Li-03}{collection}{
      author={Li, C.~W.},
       title={Lyapunov exponents of nonlinear stochastic differential equations
  with jumps},
      series={Progr. Probab.},
   publisher={Birkh\"{a}user, Basel},
        date={2003},
      volume={56},
      review={\MR{2073440}},
}

\bib{MR99}{collection}{
      author={Menaldi, Jose-Luis},
      author={Robin, Maurice},
       title={On optimal ergodic control of diffusions with jumps},
      series={Systems Control Found. Appl.},
   publisher={Birkh\"{a}user Boston, Boston, MA},
        date={1999},
      review={\MR{1702974}},
}

\bib{Menaldi-05}{article}{
      author={Menaldi, Jos\'{e}-Luis},
      author={Robin, Maurice},
       title={Remarks on risk-sensitive control problems},
        date={2005},
     journal={Appl. Math. Optim.},
      volume={52},
      number={3},
       pages={297\ndash 310},
      review={\MR{2174017}},
}

\bib{MS}{article}{
      author={Mou, Chenchen},
      author={\'{S}wi\c{e}ch, Andrzej},
       title={Aleksandrov-{B}akelman-{P}ucci maximum principles for a class of
  uniformly elliptic and parabolic integro-{PDE}},
        date={2018},
     journal={J. Differential Equations},
      volume={264},
      number={4},
       pages={2708\ndash 2736},
      review={\MR{3737852}},
}

\bib{Nagai96}{article}{
      author={Nagai, H.},
       title={Bellman equations of risk-sensitive control},
        date={1996},
        ISSN={0363-0129},
     journal={SIAM J. Control Optim.},
      volume={34},
      number={1},
       pages={74\ndash 101},
      review={\MR{1372906}},
}

\bib{PW09}{article}{
      author={Pang, G.},
      author={Whitt, W.},
       title={Heavy-traffic limits for many-server queues with service
  interruptions},
        date={2009},
     journal={Queueing Syst.},
      volume={61},
      number={2-3},
       pages={167\ndash 202},
      review={\MR{2485887}},
}

\bib{Pinsky-85}{article}{
      author={Pinsky, Ross~G.},
       title={On the convergence of diffusion processes conditioned to remain
  in a bounded region for large time to limiting positive recurrent diffusion
  processes},
        date={1985},
        ISSN={0091-1798},
     journal={Ann. Probab.},
      volume={13},
      number={2},
       pages={363\ndash 378},
      review={\MR{781410}},
}

\bib{QS08}{article}{
      author={Quaas, Alexander},
      author={Sirakov, Boyan},
       title={Principal eigenvalues and the {D}irichlet problem for fully
  nonlinear elliptic operators},
        date={2008},
     journal={Adv. Math.},
      volume={218},
      number={1},
       pages={105\ndash 135},
      review={\MR{2409410}},
}

\bib{Skorokhod-89}{book}{
      author={Skorokhod, A.~V.},
       title={Asymptotic methods in the theory of stochastic differential
  equations},
      series={Transl. Math. Monogr.},
   publisher={American Mathematical Society, Providence, RI},
        date={1989},
      volume={78},
      review={\MR{1020057}},
}

\bib{ST75}{article}{
      author={Stroock, Daniel~W.},
       title={Diffusion processes associated with {L}\'{e}vy generators},
        date={1975},
     journal={Z. Wahrscheinlichkeitstheorie und Verw. Gebiete},
      volume={32},
      number={3},
       pages={209\ndash 244},
      review={\MR{0433614}},
}

\bib{SP15}{article}{
      author={Suresh~Kumar, K.},
      author={Pal, Chandan},
       title={Risk-sensitive ergodic control of continuous time {M}arkov
  processes with denumerable state space},
        date={2015},
        ISSN={0736-2994},
     journal={Stoch. Anal. Appl.},
      volume={33},
      number={5},
       pages={863\ndash 881},
      review={\MR{3378042}},
}

\bib{Vr80}{article}{
      author={Veretennikov, A.~Ju.},
       title={Strong solutions and explicit formulas for solutions of
  stochastic integral equations},
        date={1980},
        ISSN={0368-8666},
     journal={Mat. Sb. (N.S.)},
      volume={111(153)},
      number={3},
       pages={434\ndash 452, 480},
      review={\MR{568986}},
}

\bib{Wh81}{article}{
      author={Whittle, P.},
       title={Risk-sensitive linear/quadratic/{G}aussian control},
        date={1981},
        ISSN={0001-8678},
     journal={Adv. in Appl. Probab.},
      volume={13},
      number={4},
       pages={764\ndash 777},
      review={\MR{632961}},
}

\bib{Whittle}{book}{
      author={Whittle, Peter},
       title={Risk-sensitive optimal control},
      series={Wiley-Interscience Series in Systems and Optimization},
   publisher={John Wiley \& Sons, Ltd., Chichester},
        date={1990},
        ISBN={0-471-92622-1},
      review={\MR{1093001}},
}

\bib{W09}{article}{
      author={Winter, Niki},
       title={{$W^{2,p}$} and {$W^{1,p}$}-estimates at the boundary for
  solutions of fully nonlinear, uniformly elliptic equations},
        date={2009},
     journal={Z. Anal. Anwend.},
      volume={28},
      number={2},
       pages={129\ndash 164},
      review={\MR{2486925}},
}

\end{biblist}
\end{bibdiv}

\end{document}